\newcommand{\R}{\mathbb{R}}
\newcommand{\Z}{\mathbb{Z}}
\newcommand{\N}{\mathbb{N}}
\newcommand\B{{\mathbb{B}}}
\newcommand{\D}{\mathcal{D}}
\newcommand{\cG}{\mathcal{G}}
\newcommand{\cL}{\mathcal{L}}
\newcommand{\cP}{\mathcal{P}}
\newcommand{\sB}{\mathsf{B}}
\newcommand{\Tree}{\mathsf{Tree}}
\newcommand{\Ch}{\mathsf{Ch}}
\newcommand{\one}{\mathds{1}}
\newcommand{\sC}{\mathscr{C}}
\DeclareMathOperator{\supp}{supp}
\DeclareMathOperator{\lip}{Lip}
\DeclareMathOperator{\diam}{diam}
\DeclareMathOperator{\dist}{dist}
\newcommand\restr[2]{{
		\left.\kern-\nulldelimiterspace 
		#1 
		\right|_{#2} 
}}
\newcommand{\mres}{\mathbin{\vrule height 1.6ex depth 0pt width
		0.13ex\vrule height 0.13ex depth 0pt width 1.3ex}}
\newcommand{\Hr}[1]{{\mathcal{H}^d}\mres {#1}}
\newcommand{\Hd}{\mathcal{H}^d}
\newcommand{\avg}[1]{\langle #1\rangle}
\newcommand{\wt}[1]{{\widetilde{#1}}}
\theoremstyle{plain}
\newtheorem*{maintheorem}{Main Theorem}
\newtheorem{theorem}{Theorem}
\newtheorem{lemma}[theorem]{Lemma}
\newtheorem{prop}[theorem]{Proposition}
\newtheorem*{question*}{Question}
\theoremstyle{definition}
\newtheorem{remark}[theorem]{Remark}
\numberwithin{equation}{section}
\numberwithin{theorem}{section}
\def\lec{\lesssim}
\def\R{\mathbb{R}}
\def\Z{\mathbb{Z}}
\def\ve{\varepsilon}
\newcommand{\ps}[1]{\left( #1 \right)}
\newcommand{\av}[1]{\left| #1 \right|}
\newcommand{\nrm}[1]{\left\lVert #1 \right\rVert}
\def\HH{\mathcal{H}}
\def\wt 
\def\ve{\varepsilon}
\def\Lip{{\rm Lip}}
\def\@tocline#1#2#3#4#5#6#7{\relax
  \ifnum #1>\c@tocdepth 
  \else
    \par \addpenalty\@secpenalty\addvspace{#2}%
    \begingroup \hyphenpenalty\@M
    \@ifempty{#4}{%
      \@tempdima\csname r@tocindent\number#1\endcsname\relax
    }{%
      \@tempdima#4\relax
    }%
    \parindent\z@ \leftskip#3\relax \advance\leftskip\@tempdima\relax
    \rightskip\@pnumwidth plus4em \parfillskip-\@pnumwidth
    #5\leavevmode\hskip-\@tempdima
      \ifcase #1
       \or\or \hskip 1em \or \hskip 2em \else \hskip 3em \fi%
      #6\nobreak\relax
    \dotfill\hbox to\@pnumwidth{\@tocpagenum{#7}}\par
    \nobreak
    \endgroup
  \fi}
\begin{document}

\title[An $\alpha$-number characterization of $L^{p}$ spaces]{An $\alpha$-number characterization of $L^{p}$ spaces on uniformly rectifiable sets}

\author[Azzam]{Jonas Azzam}

\address{Jonas Azzam\\
School of Mathematics \\ University of Edinburgh \\ JCMB, Kings Buildings \\
Mayfield Road, Edinburgh,
EH9 3JZ, Scotland.}
\email{j.azzam "at" ed.ac.uk}

\author[D\k{a}browski]{Damian D\k{a}browski}
\address{Damian D\k{a}browski\\ Departament de Matem\`atiques\\ Universitat Aut\`onoma de Barcelona; Barcelona Graduate School of Mathematics (BGSMath) \\ Edifici C Facultat de Ci\`encies\\
08193 Bellaterra (Barcelona), Catalonia, Spain }
\email{ddabrowski "at" mat.uab.cat}

\begin{abstract}
	We give a characterization of $L^{p}(\sigma)$ for uniformly rectifiable measures $\sigma$ using Tolsa's $\alpha$-numbers, by showing, for $1<p<\infty$ and $f\in L^{p}(\sigma)$, that
	\[
	\norm{f}_{L^{p}(\sigma)}\sim \nrm{\left(\int_{0}^{\infty} \left(\alpha_{f\sigma}(x,r)+|f|_{x,r}\alpha_{\sigma}(x,r)\right)^2\ \frac{dr}{r} \right)^{\frac{1}{2}}}_{L^{p}(\sigma)}.
	\]
\end{abstract}

\maketitle

\def\loc{{\rm loc}}

\tableofcontents

\section{Introduction}

We say a measure $\mu$ in $\R^{n}$ is {\it $d$-rectifiable} if $\mu$ is absolutely continuous with respect to $d$-dimensional Hausdorff measure and we may exhaust $\mu$-almost all of $\R^{n}$ by countably many Lipschitz graphs. It is a classical result that, for $\mu$-a.e. $x\in \R^{n}$, the densities $\mu(B(x,r))/r^{d}$ stabilize as $r\rightarrow 0$ in the sense that they converge to a nonzero constant and so on small scales the measure $\mu$ scales like $d$-dimensional Lebesgue measure. What's more is that the {\it shape} of the measure $\mu$ also stabilizes: as we zoom in on $x$, if we set $\mu_{x,r}(A)= \mu(rA+x)$, then $\mu_{x,r}r^{-d}$ converges weakly to $d$-dimensional Lebesgue measure on some $d$-dimensional plane. 

In \cite{Tol09}, Tolsa quantified how much a uniformly rectifiable measure can deviate from resembling planar Lebesgue measure. Recall that a measure $\mu$ is {\it uniformly rectifiable} (UR) if firstly, it is {\it Ahlfors $d$-regular}, meaning there is $A>0$ so that 
\[
A^{-1}r^{d}\leq \sigma(B(x,r))\leq Ar^{d} \;\; \mbox{ for all }x\in \supp \sigma,\ 0<r<\diam (\supp \sigma),
\]
and $\sigma$ has {\it big pieces of Lipschitz images} (BPLI), meaning there are constants $L,c>0$ so that for each $x\in \supp \sigma$ and $0<r<\diam (\supp \sigma)$ there is an $L$-Lipschitz mapping $f:B_{d}(0,r)\rightarrow B(x,r)$ so that $\mu(f(B_{d}(0,r))$. We say a set $E\subseteq \R^{n}$ is UR if $\HH^{d}|_{E}$ is UR.

Before stating Tolsa's result, we will describe how he measures the planarity of a measure. First, we define a distance between measures. For two measures $\mu$ and $\nu$ and a ball $B$ we define
$$F_B(\sigma,\nu):= \sup\Bigl\{ \Bigl|{\textstyle \int \phi \,d\sigma  -
	\int \phi\,d\nu}\Bigr|:\, \phi\in \Lip_{1}(B) \Bigr\},$$
where $\Lip_{1}(B)$ is the set of $1$-Lipschitz functions supported in $B$. This is a variant of the Wasserstein $1$-distance from mass transport theory. See \cite[Chapter 14]{mattila1999geometry} for a discussion about this distance. 

For a (possibly real-valued) measure $\mu$ and $d\in \N$, if $B=B(x,r)$ we define

\begin{equation}\label{alpha-def}
\alpha_\mu^{d}(x,r)= \alpha_\mu^{d}(B) := \frac1{r^{d+1}}\,\inf_{c\in\R,L} \,F_{B}(\mu,\,c\HH^{d}|_{L}),
\end{equation}

We will often omit the superscript $d$, as it will be fixed throughout. 	

\begin{theorem}[{\cite[Theorem 1.2]{Tol09}}]
\label{t:xavi}
An Ahlfors $d$-regular measure $\sigma$ is UR if and only if the measure $\alpha_{\sigma}^{d}(x,r)^{2}d\sigma(x)\frac{dr}{r}$ is a Carleson measure, meaning that for all balls $B$ centered on $\supp \sigma$ with $0<r_{B}<\diam (\supp \sigma)$,
\[
\int_{0}^{r_{B}}\int_{B} \alpha_{\sigma}^{d}(x,r)^{2}d\sigma(x)\frac{dr}{r}\leq C\sigma(B)
\]
for some fixed $C>0$.
\end{theorem}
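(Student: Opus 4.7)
The plan is to prove both directions using corona-style decompositions of $\supp\sigma$.

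For UR $\Rightarrow$ Carleson, I would apply the David--Semmes corona decomposition: the dyadic cubes on $\supp\sigma$ split into a family of stopping-time trees whose top cubes are Carleson-packed, and on each tree $\mathcal{T}$ the measure $\sigma$ lies very close to a Lipschitz graph $\Gamma_\mathcal{T}$ of small Lipschitz constant. For each $Q\in\mathcal{T}$ of radius $r$, I would estimate $\alpha_\sigma(Q)$ by taking as competitor the tangent $d$-plane to $\Gamma_\mathcal{T}$ at a point near the center of $Q$, with density the planar Jacobian of the graph parametrization; the resulting $F_{B}$-error can be bounded directly by transporting $\sigma$ first to $\Gamma_\mathcal{T}$ (cheap since $\sigma$ is coronal-close to $\Gamma_\mathcal{T}$) and then from $\Gamma_\mathcal{T}$ to its tangent plane (controlled by the $L^2$-oscillation of the unit normal, which is essentially a Jones $\beta_2$-number of $\Gamma_\mathcal{T}$). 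For a Lipschitz graph these $\beta_2$-squares sum to a Carleson measure, so $\sum_{Q\in\mathcal{T}}\alpha_\sigma(Q)^2\sigma(Q)\lesssim \sigma(Q(\mathcal{T}))$, and the Carleson packing of tree tops concludes.

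For Carleson $\Rightarrow$ UR, I would build big pieces of Lipschitz graphs to verify BPLI. Fix a ball $B_0$ centered on $\supp\sigma$ and a near-minimizing pair $(c_0,L_0)$ for $\alpha_\sigma(B_0)$. I would run a stopping time on dyadic cubes descending from $B_0$, halting at $Q$ whenever either (i) the accumulated sum $\sum_{B_0\supset Q'\supset Q}\alpha_\sigma(Q')^2$ exceeds a fixed threshold $\varepsilon_0$, or (ii) the near-minimizing plane $L_Q$ has tilted too much from $L_0$. The Carleson hypothesis plus a Chebyshev-type argument bounds the total $\sigma$-mass of type (i) stops by $C\sigma(B_0)/\varepsilon_0$, which is a small fraction of $\sigma(B_0)$ for large $\varepsilon_0$. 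Between stops, the near-minimizing planes vary slowly across both scale and location, so the non-stopped portion lies in a Lipschitz graph over $L_0$ covering a fixed fraction of $\sigma$ in $B_0$, giving BPLI.

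The main obstacle is the planar stability estimate underpinning the reverse direction: one must show that if $\alpha_\sigma(B(x,r))$ and $\alpha_\sigma(B(x',r'))$ are both small for comparable and nearby balls, then their near-minimizing pairs $(c,L)$ and $(c',L')$ are quantitatively close. This Wasserstein rigidity is delicate because $F_B$ does not directly detect the angle between two planes; the argument uses Ahlfors lower regularity to force both candidate planes to carry comparable $\sigma$-mass near $B$, then tests the definition of $F_B$ against $1$-Lipschitz cut-offs adapted in turn to the distance-to-$L$ function and to a linear function along $L$, in order to pin down the plane and the density jointly. Once this rigidity is available, the Lipschitz graph in the reverse direction is assembled by a standard David--Toro-type parametrization of the almost-minimizing planes along the non-stopped tree.
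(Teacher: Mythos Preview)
The paper does not contain a proof of this theorem at all: it is quoted verbatim from \cite{Tol09} and used throughout as a black box (see the proof of Proposition~\ref{prop:alphas bounded by f} and the estimate of $J_1''f$ in Section~\ref{sec:Jfp < fp}). There is therefore nothing in the present paper to compare your proposal against.

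That said, a brief comment on your sketch relative to Tolsa's actual argument in \cite{Tol09}. Your forward direction (UR $\Rightarrow$ Carleson) is in the right spirit, though Tolsa does not pass through the corona decomposition; he works more directly with the $\beta$-number characterization of UR and controls $\alpha_\sigma(Q)$ by a telescoping sum of $\beta$-type quantities over descendants of $Q$, very much like what is done in Lemma~\ref{lem:key estimate} of the present paper with martingale differences. Your reverse direction, however, is substantially harder than necessary. You propose to build BPLI by hand via a stopping-time construction and a David--Toro parametrization, and you correctly flag the plane-stability lemma as the delicate point. But this is all avoidable: $\beta$-numbers are pointwise dominated by $\alpha$-numbers (any near-minimizing flat measure $c\mathcal{H}^d|_L$ for $\alpha_\sigma$ automatically gives a competitor plane for $\beta_\sigma$, and testing against the distance-to-$L$ function shows $\beta_{1}\lesssim\alpha$). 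Hence the Carleson condition on $\alpha_\sigma^2$ immediately implies the Carleson condition on $\beta_\sigma^2$, and UR then follows from the David--Semmes $\beta$-characterization \cite{DS} without any graph construction. Your plane-stability obstacle is a real phenomenon, but it is not needed for this theorem.
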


Estimates on $\alpha$-type numbers are particularly useful in studying rectifiability. From a geometric view point, they give quite a lot of information about the shape of a measure. David and Semmes \cite{DS} gave an earlier characterization of UR sets in terms of a Carleson meausure condition on $\beta$-numbers, which are quantities like $\alpha$-numbers except they only measure the average distance of a measure to a plane, so while a measure could be very close to lying on a plane, its mass could be very unevenly distributed resulting in a large $\alpha$-number. The additional information provided by the $\alpha$-numbers was crucial for the main result of \cite{Tol09}, where Tolsa improved on the work in \cite{DS} by expanding the class of Calderon-Zygmund operators on UR sets that were known to be bounded. See also \cite{Tol08} where $\alpha$-numbers are used to characterize rectifiability of sets of finite measure in terms of existence of principal values for the Riesz transform, and \cite{DEM18,feneuil2020absolute,david2020harmonic} where they are used to study higher co-dimensional analogues of harmonic measure. 

The purpose of this note is to extend Tolsa's result to measures that are not Ahlfors regular, but are given by $L^{p}$ functions defined on UR sets. 

Given a Radon measure $\sigma$, $f\in L^1_{loc}(\sigma)$, and a ball $B=B(x,r)$ with $\sigma(B(x,r))>0$ set
\begin{equation*}
f_B = {f}_{x,r} = \frac{\int_B f\ d\sigma}{\sigma(B)}.
\end{equation*}

\begin{maintheorem}
	Let $\sigma$ be a UR measure and $f\in L^{p}(\sigma)$ where $1<p<\infty$. Then
	\begin{equation}\label{eq:main thm}
		\norm{f}_{L^{p}(\sigma)}\sim \nrm{\left(\int_{0}^{\infty} \left(\alpha_{f\sigma}(x,r)+|f|_{x,r}\alpha_{\sigma}(x,r)\right)^2\ \frac{dr}{r} \right)^{\frac{1}{2}}}_{L^{p}(\sigma)},
	\end{equation}
	with the implicit constant depending on $p$ and $\sigma$.
\end{maintheorem}

\subsection*{Sharpness of the result}
An interesting aspect of our result is the presence of two terms that comprise our square function. We don't know whether the result holds for general UR sets without the second term. Neither of the terms bounds the other in the pointwise sense: one could be zero while the other is nonzero. On the other hand, we don't know whether the norm of the square function involving only ${\alpha}_{f\sigma}$ dominates the one involving only $|f|_{x,r}\alpha_{\sigma}$. The reverse inequality is certainly not true, as the latter square function vanishes if $\sigma$ is the Lebesgue measure on $\Sigma=\R^{d}$.
\begin{question*}
	Let $\sigma$ be a UR measure and $f\in L^{p}(\sigma)$ where $1<p<\infty$. Do we have
	\begin{equation}\label{eq:question}
	\norm{f}_{L^{p}(\sigma)}\lesssim \nrm{\left(\int_{0}^{\infty} \alpha_{f\sigma}(x,r)^2\ \frac{dr}{r} \right)^{\frac{1}{2}}}_{L^{p}(\sigma)}?
	\end{equation}
 Equivalently, is it true that
	\begin{equation*}
		\nrm{\left(\int_{0}^{\infty} (|f|_{x,r}\alpha_{\sigma}(x,r))^2\ \frac{dr}{r} \right)^{\frac{1}{2}}}_{L^{p}(\sigma)}\lesssim \nrm{\left(\int_{0}^{\infty} \alpha_{f\sigma}(x,r)^2\ \frac{dr}{r} \right)^{\frac{1}{2}}}_{L^{p}(\sigma)}?
	\end{equation*}
\end{question*}
The answer to the question above is obviously affirmative in the flat case, i.e. $\sigma=\Hd\mres L$ for $L$ a $d$-dimensional plane. It is also positive if $\sigma$ is an Ahlfors $d$-regular measure on a $d$-dimensional plane $L$, i.e. $\sigma=g\Hd\mres L$ for some $g$ satisfying $A^{-1}\le g\le A$. Indeed, let $\tilde{\sigma}=\Hd\mres L$, so that $f\sigma = fg\tilde{\sigma}$. In that case, by the Main Theorem
\begin{equation*}
	\norm{f}_{L^{p}(\sigma)}\sim_A \norm{fg}_{L^{p}(\tilde{\sigma})} \sim_{p} \nrm{\left(\int_{0}^{\infty} \alpha_{fg\tilde{\sigma}}(x,r)^2\ \frac{dr}{r} \right)^{\frac{1}{2}}}_{L^{p}(\tilde{\sigma})} \sim_A \nrm{\left(\int_{0}^{\infty} \alpha_{f{\sigma}}(x,r)^2\ \frac{dr}{r} \right)^{\frac{1}{2}}}_{L^{p}(\sigma)}.
\end{equation*}
Finally, one could show that \eqref{eq:question} is true for ``sufficiently flat'' UR measures $\sigma$. What we mean by this is that if the constant $C$ from \thmref{t:xavi} is sufficiently small, then some variant of Carleson's embedding theorem can be used\footnote{For $p=2$ use e.g. \cite[Theorem 5.8]{tolsa2014analytic}, for $p\neq 2$ one can show a corresponding statement by proving an appropriate good-lambda inequality, in the spirit of what we do in Section \ref{sec:Jfp < fp} (but simpler).} to show that
\begin{equation*}
	\nrm{\left(\int_{0}^{\infty} \left(|f|_{x,r}\alpha_{\sigma}(x,r)\right)^2\ \frac{dr}{r} \right)^{\frac{1}{2}}}_{L^{p}(\sigma)}\lesssim_p C \norm{f}_{L^{p}(\sigma)}\ll \norm{f}_{L^{p}(\sigma)}.
\end{equation*}
This means that the second term from the square function in \eqref{eq:main thm} can essentially be absorbed by the left hand side. To make this more rigorous, one should perhaps track the dependence of the implicit constants in \eqref{eq:main thm} on the UR constants of $\sigma$ with more diligence than we did. However, the implicit constants can only get \emph{better} as $\sigma$ becomes flatter, and they certainly cannot blow-up as the Carleson constant $C$ goes to $0$: if $\sigma$ satisfies the Carleson condition of \thmref{t:xavi} with some $C$, then it also satisfies it with constant $C'$ for every $C'\ge C$.

\subsection*{Related work} While our focus has been in the Ahlfors regular setting, $\alpha$-numbers have also been used to study measures in more general general settings. In \cite{ATT18}, it was shown that pointwise doubling measures $ \mu$ were $d$-rectifiable on the set where the square function $\int_{0}^{\infty}\alpha_{\mu}(x,r)^2\frac{dr}{r}$ was finite, resolving a question left open in \cite{ADT16}. This paper also exposed some limitations with working with $\alpha$-numbers, as a counterexample showed that the same result is not true for general measures. However, the second author of this paper obtained such a generalization in \cite{Dab19} using a different $\alpha$-number, which measures distance between a measure and planar measure using the Wasserstein $2$-metric, which Tolsa had earlier used to give a characterization of UR measures in \cite{Tol12}. So while using the Wasserstein $2$-distance allows one to get a more complete picture, the $\alpha$-number in \thmref{t:xavi} has a more transparent definition and thus is easier to work with.

In \cite{Orp18} Orponen uses a similar square function to characterize when two measures on the real line (one being doubling) are absolutely continuous, however among a few of the differences between the $\alpha$-numbers he uses and ours, while  we compare distance between a measure and a plane, his numbers compare the distance between the two measures, which is another interesting direction. 

\subsection*{Organization of the article} In Section \ref{sec:preliminaries} we introduce the necessary tools and make some initial reductions. We define also $Jf$, a dyadic variant of the square function from the Main Theorem, see \eqref{Jones alpha function}.

We show that $\nrm{Jf}_2\lec \nrm{f}_2$ in Section \ref{sec:Jf2 < f2}. The proof uses martingale difference operators, and it is inspired by how \thmref{t:xavi} was originally proved, see \cite[Section 4]{Tol09}. In Section \ref{sec:Jfp < fp} we use the estimate $\nrm{Jf}_2\lec \nrm{f}_2$ and an appropriate good-lambda inequality to conclude that $\nrm{Jf}_p\lec \nrm{f}_p$ for general $1<p<\infty$.

Finally, in Section \ref{sec:fp < Jfp} we prove $\nrm{f}_p\lec \nrm{Jf}_p$. To do that we use the Littlewood-Paley theory of David, Journ\'{e} and Semmes \cite{DJS85}.

\subsection*{Acknowledgments}
We would like to thank Xavier Tolsa for helpful discussions about the paper.

Most of work on the article has been done during second author's visit to the University of Edinburgh. We are very grateful to the university staff for their kindness and hospitality.

The second author received support from the Spanish Ministry of Economy and Competitiveness, through the María de Maeztu Programme for Units of Excellence in R\&D (MDM-2014-0445), and also partial support from the Catalan Agency for Management of University and Research Grants (2017-SGR-0395), and from the Spanish Ministry of Science, Innovation and Universities (MTM-2016-77635-P).

\section{Preliminaries}\label{sec:preliminaries}

\subsection{Notation}
	In our estimates we will write $f\lesssim g$ to denote $f\le C g$ for some constant $C$ (the so-called ``implicit constant''). If the implicit constant depends on a parameter $t$, i.e. $C=C(t)$, we will write $f\lec_t g$. The notation $f\sim g$ and $f\sim_t g$ stands for $g\lec f\lec g$ and $g\lec_t f\lec_t f$, respectively. To make the notation lighter, we will usually not track the dependence of $C$ on dimensions $n,\ d,$ on the Ahlfors regularity constant of $\sigma$, or the parameter $1<p<\infty$.
	
	Given $x\in\R^n$ and $r>0$ we denote by $B(x,r)$ the open ball centered at $x$ with radius $r$. Conversely, given a ball $B$ (either open or closed, and either $n$ or $d$-dimensional), $r_B$ and $z_B$ denote the radius and the center of $B$, respectively.
	
	For simplicity, we will sometimes write
	\begin{equation*}
	\norm{f}_p:=\norm{f}_{L^p(\sigma)}.
	\end{equation*}
	
	In the introduction we introduced the notation $f_B$ to signify the average of $f$ over a ball $B$ with respect to $\sigma$. For general Borel sets $E\subset \R^d$ with $\sigma(E)>0$ and $f\in L^1_{loc}(\sigma)$ we will write
	\begin{equation*}
	\avg{f}_E = \frac{\int_E f\ d\sigma}{\sigma(E)}.
	\end{equation*}
	
	For a finite set $I$ we will write $\# I$ to denote the cardinality of $I$.
	
	If $v,w\in\R^n$, then $v\cdot w$ denotes their scalar product.
	
	Given $E, F\subset\R^d,$ $\dist_H(E,F)$ stands for the Hausdorff distance between $E$ and $F$.

	\subsection{Adjacent systems of cubes}\label{sec:adjacent}
	As usual, we will work with a family of subsets of $\supp\sigma=:\Sigma$ that in many ways resemble the family of dyadic cubes on $\R^d$. For this reason we will call these sets ``cubes''. Many different systems of cubes have been constructed throughout the years, beginning with the work of David \cite{david1988morceaux} and Christ \cite{christ1990tb}. In our proof it will be convenient to use adjacent systems of cubes constructed by Hyt\"{o}nen and Tapiola \cite{hytonen2014almost}. One should think of them as a generalization of the translated dyadic grids in $\R^d$, widely used to perform the ``$1/3$ trick''.
	
	First, we will say that a family $\D$ of Borel subsets of $\Sigma$ satisfies \emph{the usual properties of David-Christ cubes} if $\D=\bigcup_{k\in\Z}\D_k$, and for each $k\in\Z$:
	\begin{enumerate}[label=(\alph*)]
		\item for $P,\ Q\in\D_k,\ P\neq Q,$ we have $\sigma(P\cap Q)=\varnothing$,
		\item the sets in $\D_k$ cover $\Sigma$:
		\begin{equation*}
		\Sigma=\bigcup_{Q\in\D_k}Q,
		\end{equation*}
		\item for each $Q\in\D_k$ and each $l\ge k$ 
		\begin{equation*}
		Q = \bigcup_{P\in\D_l : P\subset Q} P,
		\end{equation*}
		\item there exists $0<\delta<1$ (independent of $k$) such that each $Q\in\D_k$ has a center $z_Q\in Q$ satisfying
		\begin{equation}\label{eq:balls and cubes}
		B\big(z_Q,\frac{\delta^k}{5}\big)\cap\Sigma\subset Q\subset B(z_Q,3{\delta^k})\cap\Sigma.
		\end{equation}
		Consequently, as long as $\delta^k\lesssim \diam(\Sigma)$, we have $\sigma(Q)\sim \delta^{kd}$. Set $\ell(Q):=\delta^k$.
		\item the cubes $Q\in\D_k$ have thin boundaries, that is, there exists $\gamma\in (0,1)$ such that for $\eta\in (0,0.1)$ we have
		\begin{equation}\label{eq:thin boundaries}
		\sigma(\{x\in\Sigma\ :\ \dist(x,Q) + \dist(x,\Sigma\setminus Q)<\eta\ell(Q)\})\le \eta^{\gamma}\sigma(Q).
		\end{equation}
	\end{enumerate}
	
	\begin{remark}
		Note that in the above we assume $\D_k$ to be defined for all $k\in\Z$. In the case of unbounded $\Sigma$, this translates to having arbitrarily large cubes as $k\to-\infty$. In the case of compact $\Sigma$, there exists some $k_0$ such that for all $k\le k_0$ we have $\D_k=\{\Sigma\}$. However, in our proof we will assume that $\Sigma$ is unbounded, see \lemref{lem:Sigma unbounded}.
	\end{remark}

	In our setting, the results \cite[Theorem 2.9, Theorem 5.9]{hytonen2014almost} can be summarized as follows.
	\begin{lemma}\label{lem:cubes}
		Let $\sigma$ be a $d$-Ahlfors regular measure on $\R^n$. Then, there exist $1\le N<\infty$ and a small constant $0<\delta<0.01$, depending only on the Ahlfors regularity constants of $\sigma$, such that the following holds. Let $\Omega=\{1,\dots, N\}$. For each $\omega\in \Omega$ we have a system of cubes $\D(\omega)$ satisfying the usual properties of David-Christ cubes, and additionally, for all $x\in\supp\sigma$ and $r>0$ there are $\omega\in\Omega,\ k\in\Z$ and $Q\in\D_k(\omega)$ with
		\begin{equation*}
		B(x,r)\cap\supp\sigma \subset Q
		\end{equation*}
		and 
		\begin{equation*}
		\ell(Q)=\delta^k\sim_{\delta} r.
		\end{equation*}
	\end{lemma}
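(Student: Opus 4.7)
The plan is to treat this lemma as essentially an adaptation of the adjacent dyadic systems of Hytönen and Tapiola \cite{hytonen2014almost} to our Ahlfors regular setting, so the work is mostly verification rather than new construction.

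First I would set up the metric space framework. Take $\Sigma = \supp\sigma$ equipped with the restriction of the Euclidean metric. Since $\sigma$ is $d$-Ahlfors regular, $(\Sigma, |\cdot|, \sigma)$ is a geometrically doubling metric measure space: a standard volume-packing argument using $\sigma(B(x,r)) \sim r^d$ bounds the number of disjoint balls of radius $r/2$ inside $B(x,r)$ by a constant depending only on $d$ and the regularity constant $A$. This puts us squarely inside the framework of \cite{hytonen2014almost}.

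Next I would invoke \cite[Theorem 2.9]{hytonen2014almost} directly: it produces, for a geometrically doubling space, a finite collection of dyadic systems $\D(1),\dots,\D(N)$, each satisfying properties (a), (b), (c), (d) with a common ratio parameter $\delta \in (0,1)$, and with the adjacency property that every ball $B(x,r)$ centered in $\Sigma$ is contained in some cube $Q \in \D_k(\omega)$ with $\delta^k \sim_\delta r$. Both $N$ and $\delta$ can be chosen depending only on the doubling constant, hence only on the Ahlfors regularity constant of $\sigma$. Shrinking $\delta$ if necessary (which only costs us refining each system) guarantees $\delta < 0.01$. The measure estimate $\sigma(Q) \sim \delta^{kd}$ in (d) then follows immediately from Ahlfors regularity applied to the inclusions in \eqref{eq:balls and cubes}.

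For the thin boundary property (e), I would appeal to \cite[Theorem 5.9]{hytonen2014almost}, whose purpose is exactly to refine the cube construction so that, in addition to (a)--(d), the boundary estimate \eqref{eq:thin boundaries} holds with an exponent $\gamma \in (0,1)$ depending only on the doubling constant and on $\delta$. The fact that this refinement can be carried out simultaneously on all $N$ adjacent systems while preserving the adjacency property of Theorem 2.9 is the technical heart of \cite{hytonen2014almost}, and I would simply cite it.

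The only step that might require non-trivial thought is verifying that the adjacency conclusion we need is truly the one produced by Hytönen-Tapiola (they state it in slightly different language, involving a ``dilation'' of cubes rather than direct containment of balls), but this is a standard cosmetic translation: if $B(x,r)$ is contained in the Hytönen--Tapiola enlarged cube $c Q$, one passes to the parent cube of $Q$ at a comparable scale. That is the only mild obstacle; there is no new analysis to do, and the constants $N$ and $\delta$ inherit their dependence on the Ahlfors regularity constants through the doubling constant.
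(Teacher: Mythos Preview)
Your approach is essentially the paper's own: both treat the lemma as a direct citation of Hyt\"onen--Tapiola \cite{hytonen2014almost}, with the geometric doubling coming from Ahlfors regularity and the constants $N,\delta$ inherited through the doubling constant. The paper's only added precision is in two places you gloss over: for property (a) it notes that one must take the \emph{closed} cubes of \cite[Theorem 2.9]{hytonen2014almost} and then argue $\sigma(\partial Q)=0$ via the thin-boundary estimate (e); and for (e) itself the paper does not cite \cite[Theorem 5.9]{hytonen2014almost} but instead says one should adapt Christ's original argument \cite[pp.~610--612]{christ1990tb} using Ahlfors regularity---so you may want to double-check that Theorem 5.9 actually delivers the deterministic small-boundary estimate you attribute to it.
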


	\begin{remark}
		The construction in \cite{hytonen2014almost} is valid for general (geometrically) doubling metric spaces, possibly with no underlying measure space structure. The constants $N$ and $\delta$ from \lemref{lem:cubes} depend on the doubling constant of the metric space. Hyt\"{o}nen and Tapiola construct two different kinds of cubes, which they call open and closed cubes, see \cite[Theorem 2.9]{hytonen2014almost}. In the above we consider closed cubes, so that properties (b), (c) and (d) follow immediately from \cite[Theorem 2.9]{hytonen2014almost}. To get the property (a) one uses the fact that interiors of $P$ and $Q$ are disjoint by \cite[(2.11)]{hytonen2014almost}, and then $\sigma(\partial P) = \sigma(\partial Q)=0$ follows from (e). To prove the thin boundaries property (e) one may adapt the proof of Christ \cite[pp. 610--612]{christ1990tb} together with Ahlfors regularity of $\sigma$. We omit the details.
	\end{remark}
	
	From now on, let us fix a uniformly rectifiable measure $\sigma$, with $\Sigma=\supp\sigma$. Let $\Omega,\ \delta$ and $\D(\omega)$ be as in \lemref{lem:cubes}. For simplicity, in our estimates we will not track the dependence of implicit constants on $\delta$. 
	
	For all $\omega\in\Omega$ and $Q\in\D_k(\omega)$ we will write
	\begin{align*}
	\D(Q) &:= \{P\in\D(\omega)\ :\ P\subset Q \},\\
	\Ch(Q) &:= \D(Q)\cap \D_{k+1}(\omega).
	\end{align*}
	The elements of $\Ch(Q)$ will be called children of $Q$, and $Q$ will be called their parent. 
	
	Set
	\begin{equation*}
	B_Q:= B(z_Q,4\ell(Q)),
	\end{equation*}
	so that $Q\subset B_Q\cap\Sigma$, and whenever $P\in\D(Q)$ we also have $B_P\subset B_Q$.
	
	Fix some $\omega_0\in\Omega$, and set
	\begin{equation*}
	\D := \D(\omega_0).
	\end{equation*}
	This will be our system of reference. Given $Q\in\D$ we define $\omega(Q)\in\Omega$ to be the index such that there exists $R(Q)\in\D(\omega(Q))$ satisfying $B_Q\cap\supp\sigma\subset R(Q)$ and $\ell(R(Q))\sim\ell(Q)$. If there is more than one such $\omega$, we simply choose one. We define also $\cG(\omega)\subset\D$ as the family of cubes $Q\in\D$ such that $\omega(Q)=\omega$. Clearly,
	\begin{equation*}
	\bigcup_{\omega\in\Omega}\cG(\omega)=\D.
	\end{equation*}

\subsection{\texorpdfstring{$\alpha$}{alpha}-numbers}
In proving the main theorem, it will be more convenient to work with dyadic versions of the $\alpha$-numbers. Below we will introduce the notation needed for this framework.
	Given a Radon measure $\mu$ we denote by $L_{x,r}^{\mu}$ a minimizing $d$-plane for $\alpha_{\mu}(x,r)$, and by $c_{x,r}^{\mu}$ the corresponding constant. They may be non-unique, in which case we just choose one of the minimizers. Set $\cP_{x,r}^{\mu}=\Hr{L_{x,r}^{\mu}}$ and $\cL_{x,r}^{\mu} = c_{x,r}^{\mu}\cP_{x,r}^{\mu}$. If $B=B(x,r)$ we will also write $L_{B}^{\mu},\ c_B^{\mu}$ etc.
	
	For $Q\in\D$ and a Radon measure $\mu$ we set
	\begin{equation*}
	\alpha_{\mu}(Q) := \alpha_{\mu}({B_Q}).
	\end{equation*}
	We will write $L_Q^{\mu}:=L_{B_Q}^{\mu},\ c_Q^{\mu}:=c_{B_Q}^{\mu}$ etc.
	
{	Observe that whenever $B_1\subset B_2$ are balls, we have $\lip_1(B_1)\subset\lip_1(B_2)$, and so if $r_{B_1}\ge C r_{B_1}$, then
	\begin{multline}\label{eq:alphas on two balls}
	\alpha_{\mu}(B_1)= \frac1{r_{B_1}^{d+1}}\,\inf_{c\in\R,L} \,F_{B_1}(\mu,\,c{\HH^{d}}\mres{L})\\
	\le \frac1{r_{B_1}^{d+1}} F_{B_1}(\mu,\cL_{B_2}^{\mu})\le \frac1{r_{B_1}^{d+1}} F_{B_2}(\mu,\cL_{B_2}^{\mu})\sim_C\alpha_{\mu}(B_2).
	\end{multline}}
	
	Consider the following square function:
	\begin{equation}\label{Jones alpha function}
	J(x) = \bigg( \sum_{x\in Q\in\D} \alpha_{f\sigma}(Q)^2 + |f|_{B_Q}^2\alpha_{\sigma}(Q)^2\bigg)^{1/2}.
	\end{equation}
	
The Main Theorem will follow from the following dyadic version:

\begin{theorem}\label{thm:dyadic main theorem}
Let $\sigma$ be a uniformly rectifiable measure with unbounded support, and let $f\in L^p(\sigma)$ for some $1<p<\infty$. Then
\begin{equation*}
		\norm{Jf}_{L^p(\sigma)} \sim \norm{f}_{L^p(\sigma)}.
		\end{equation*}
\end{theorem}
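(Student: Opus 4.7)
The proof follows the three-step route outlined in the introduction, and I sketch each step below.

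\textbf{Step 1: the $L^2$ bound $\nrm{Jf}_2 \lesssim \nrm{f}_2$.} I would decompose $f$ in martingale differences against the David--Christ system $\D$: writing $E_Q f := \sum_{P \in \Ch(Q)} (\avg{f}_P - \avg{f}_Q)\one_P$, orthogonality gives $\nrm{f}_2^2 \sim \sum_{Q\in\D} \nrm{E_Q f}_2^2$. For each $Q$, I would split $f = (f - \avg{f}_{\hat Q}) + \avg{f}_{\hat Q}$ for a suitable ancestor $\hat Q$ of $Q$ and apply the triangle inequality
\begin{equation*}
\alpha_{f\sigma}(Q) \leq \alpha_{(f-\avg{f}_{\hat Q})\sigma}(Q) + |\avg{f}_{\hat Q}|\,\alpha_\sigma(Q).
\end{equation*}
The second term, together with the existing $|f|_{B_Q}\alpha_\sigma(Q)$ in $Jf$, would be absorbed via Tolsa's Carleson estimate (Theorem \ref{t:xavi}) and a Carleson embedding argument; the first I would control by $L^2$ sums of the differences $E_P f$ at scales close to $\ell(Q)$, using that $(f - \avg{f}_{\hat Q})\sigma$ tests as zero against any constant function on $B_Q$. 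Summing in $Q$ and invoking almost-orthogonality across the adjacent systems of \lemref{lem:cubes} yields the claim.

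\textbf{Step 2: upgrading to $L^p$.} I would prove a good-$\lambda$ inequality
\begin{equation*}
\sigma(\{Jf > 2\lambda,\ Mf \le \gamma\lambda\}) \lesssim \gamma^2\sigma(\{Jf > \lambda\}),
\end{equation*}
for the Hardy--Littlewood maximal operator $M$ on $(\Sigma,\sigma)$. The argument is local on Whitney-type cubes $Q_j$ inside $\{Jf > \lambda\}$: on each such cube one applies the $L^2$ bound from Step~1 to $f\one_{CQ_j}$, uses Chebyshev, and controls the tail contribution by $Mf(x_j)\le\gamma\lambda$. Standard integration in $\lambda$ then gives $\nrm{Jf}_p \lesssim \nrm{Mf}_p \lesssim \nrm{f}_p$.

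\textbf{Step 3: the reverse inequality $\nrm{f}_p \lesssim \nrm{Jf}_p$.} Here I would invoke the David--Journ\'e--Semmes \cite{DJS85} Littlewood--Paley theory for UR measures, which produces operators $\{D_Q\}_{Q\in\D}$ built from smooth $\sigma$-adapted approximations of the identity such that $\nrm{f}_p \sim \nrm{\bigl(\sum_{Q\in\D}|D_Q f|^2\bigr)^{1/2}}_p$. The core task is the pointwise estimate
\begin{equation*}
|D_Q f(x)|\,\one_Q(x) \lesssim \sigma(Q)^{-1}\bigl(\alpha_{f\sigma}(Q) + |f|_{B_Q}\alpha_\sigma(Q)\bigr)\one_Q(x),
\end{equation*}
plus a rapidly decaying tail whose $\ell^2(\D)$-norm is absorbed by $Jf$. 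The heuristic is transparent: if $f\sigma$ and $\sigma$ were both exactly planar on $B_Q$, then $f$ would restrict to the constant $c_Q^{f\sigma}/c_Q^\sigma$ on $L_Q^\sigma$ and $D_Q f$ would vanish; the two $\alpha$-terms quantify the failure of this flatness. I expect this step to be the main obstacle, because one must convert global Wasserstein-type quantities on $B_Q$ into pointwise control of a smoothed local average, which requires a careful comparison of the minimizers $(L_Q^{f\sigma},c_Q^{f\sigma})$ and $(L_Q^\sigma,c_Q^\sigma)$. The correction $|f|_{B_Q}\alpha_\sigma(Q)$ is exactly what is needed to treat the regime where $|f|_{B_Q}$ is large but the local oscillation of $f$ is not---the same regime that prevents $\bigl(\int \alpha_{f\sigma}^2\,\tfrac{dr}{r}\bigr)^{1/2}$ alone from controlling $\nrm{f}_p$, as the open question in the introduction highlights.
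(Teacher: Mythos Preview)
Your three-step outline matches the paper's, and Steps~1 and~3 are essentially what the paper does (with minor corrections: in Step~1 the martingale sum must run over \emph{all} $P\in\D(R)$ below $Q$, with the decaying weight $\ell(P)^{1+d/2}/\ell(Q)^{1+d}$, not just ``scales close to $\ell(Q)$''; in Step~3 the pointwise bound carries no $\sigma(Q)^{-1}$ factor and no tail term---the paper's estimate is the clean inequality $|\tilde D_k f(x)|\lesssim\alpha_{f\sigma}(Q)+|f|_{B_Q}\alpha_\sigma(Q)$). The real gap is in Step~2.

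Applying the $L^2$ bound to $f\one_{CQ_j}$ and then Chebyshev gives $\sigma(\{J_1f>\lambda\}\cap Q_j)\lesssim\lambda^{-2}\nrm{f\one_{CQ_j}}_2^2$, but the hypothesis $Mf(x_j)\le\gamma\lambda$ only controls the $L^1$ average $|f|_{CQ_j}$, not $\nrm{f\one_{CQ_j}}_2$; and there is no ``tail contribution'' to invoke, since the truncated square function on a cube $S$ depends only on $f$ restricted to $B_S$. The paper fills this gap with a Calder\'on--Zygmund decomposition $f=g+b$ at level $\sim\varepsilon\lambda$ on an enlarged cube $R\supset B_S$: the good part satisfies $\nrm{g}_\infty\lesssim\varepsilon\lambda$, so $\nrm{g\one_R}_2\lesssim\varepsilon\lambda\,\sigma(R)^{1/2}$ and the $L^2$ bound from Step~1 applies to $Jg$; the bad part $b=\sum_j b_j$ is handled via the separate estimate
\[
\alpha_{f\sigma}(Q)\lesssim\alpha_{g\sigma}(Q)+\varepsilon\lambda\sum_{j:\,Q_j\cap B_Q\neq\varnothing}\frac{\ell(Q_j)^{d+1}}{\ell(Q)^{d+1}},
\]
exploiting that each $b_j$ has zero mean. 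Summing the second term requires restricting to a subset $S_1\subset S$ that avoids a thin neighborhood of the bad cubes (so that $B_Q\cap Q_j\neq\varnothing$ with $Q\cap S_1\neq\varnothing$ forces $\ell(Q_j)\lesssim\ell(Q)$), which in turn uses the thin-boundary property \eqref{eq:thin boundaries} of the Christ cubes. This CZ step and the accompanying lemma are essential; without them the good-$\lambda$ inequality does not close.
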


First, let us show why we may assume that $\supp\sigma$ is unbounded.
\begin{lemma}\label{lem:Sigma unbounded}
	It suffices to only prove Main Theorem in the case that $\supp \sigma$ is unbounded. 
\end{lemma}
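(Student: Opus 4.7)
The plan is to extend $\sigma$ to an unbounded UR measure $\tilde{\sigma}$ and then invoke the Main Theorem in the unbounded case. Set $\Sigma := \supp\sigma$ and $D := \diam\Sigma$. I would pick a $d$-plane $L \subset \R^n$ with $\dist(\Sigma, L) \sim D$ and define $\tilde{\sigma} := \sigma + \Hd\mres L$. The first step is to verify that $\tilde{\sigma}$ is UR with unbounded support. Ahlfors $d$-regularity is checked by a case analysis on the ball's center and radius: balls of radius $\ll D$ centered in $\Sigma$ see only $\sigma$; balls of radius $\ll D$ centered in $L$ see only the plane; and balls of radius $\gtrsim D$ see both but remain regular because $\Hd(L \cap B) \sim r^d$. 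The choice $\dist(\Sigma, L) \sim D$ is essential to avoid a gap at intermediate scales where regularity would fail. BPLI is checked similarly, inheriting from $\sigma$ at small scales in $\Sigma$, from $L$ at small scales around $L$, and using a $d$-disk in $L \cap B$ at large scales around $\Sigma$.

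Next, extend $f$ by zero on $L$, writing $\tilde{f} := f\one_\Sigma \in L^p(\tilde{\sigma})$. Since $\tilde{f}\tilde{\sigma} = f\sigma$ as measures and $\nrm{\tilde{f}}_{L^p(\tilde{\sigma})} = \nrm{f}_{L^p(\sigma)}$, the Main Theorem applied to the unbounded $\tilde{\sigma}$ yields $\nrm{f}_{L^p(\sigma)} \sim \nrm{S\tilde{f}}_{L^p(\tilde{\sigma})}$, where $S$ denotes the right-hand side of \eqref{eq:main thm}. It remains to compare $\nrm{S\tilde{f}}_{L^p(\tilde{\sigma})}$ with $\nrm{Sf}_{L^p(\sigma)}$. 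For $x \in \Sigma$ and $r < \dist(\Sigma, L)$, the measures $\tilde{\sigma}$ and $\sigma$ agree on $B(x,r)$, so the integrands defining $Sf(x)$ and $S\tilde{f}(x)$ match. At scales $r \gtrsim D$, using $|\phi| \le r$ for $\phi \in \lip_1(B(x,r))$ yields $\alpha_{\sigma}(x,r), \alpha_{\tilde{\sigma}}(x,r) \lesssim (D/r)^d$ and $\alpha_{f\sigma}(x,r) \lesssim \nrm{f}_{L^1(\sigma)}/r^d$; after integration, the large-scale contributions to both $Sf(x)$ and $S\tilde{f}(x)$ are $O(\avg{|f|}_\Sigma) =: O(a)$. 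For $x \in L$, the decay $|S\tilde{f}(x)| \lesssim a/\dist(x,\Sigma)^d$ combined with $p > 1$ makes $\int_L |S\tilde{f}|^p\, d\Hd$ a finite multiple of $a^p\sigma(\Sigma) \le \nrm{f}_{L^p(\sigma)}^p$.

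The main obstacle is closing the absorption. The above pointwise estimates on $\Sigma$ combined with the $L$-contribution only give $\nrm{f}_{L^p(\sigma)}^p \lesssim \nrm{Sf}_{L^p(\sigma)}^p + a^p\sigma(\Sigma)$, which is vacuous since Jensen's inequality gives $a^p\sigma(\Sigma) \le \nrm{f}_{L^p(\sigma)}^p$. To resolve this I would show that $|Sf(x)| \gtrsim a$ pointwise for $x \in \Sigma$. Indeed, for $x \in \Sigma$ and $r > D$ one has $|f|_{x,r} = a$ exactly (since $B(x,r) \cap \Sigma = \Sigma$), while a transport-cost argument combined with Ahlfors regularity of the bounded $\sigma$ yields the lower bound $\int_D^{2D} \alpha_\sigma(x,r)^2\, dr/r \gtrsim 1$ (this is where bounded support is exploited: the best-fit planar measure at scale $\sim D$ cannot avoid paying transport cost $\sim D$ for the concentrated mass $\sim D^d$). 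Together these give $|Sf(x)| \gtrsim a$ for $x \in \Sigma$, hence $a^p\sigma(\Sigma) \lesssim \nrm{Sf}_{L^p(\sigma)}^p$, allowing the error to be absorbed and completing the reduction.
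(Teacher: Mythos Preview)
Your proposal is correct and follows essentially the same route as the paper: extend $\sigma$ to an unbounded UR measure by gluing on a $d$-plane (the paper uses the best-fit plane $L_\B^\sigma$ restricted to $\R^d\setminus 4\B$, you use an arbitrary plane at distance $\sim D$), compare the two square functions scale-by-scale, and absorb the large-scale error $a^p\sigma(\Sigma)$ via the observation that $\alpha_\sigma(x,r)\gtrsim 1$ at scales $r\sim D$ (the paper phrases this as $\alpha_\sigma(2\B)\sim 1$). The only cosmetic point is that your interval $[D,2D]$ for the $\alpha_\sigma$ lower bound should perhaps be shifted slightly (e.g.\ to $[2D,3D]$) to ensure $\Sigma\subset B(x,r)$ strictly, but this does not affect the argument.
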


\begin{proof}
	Suppose $\sigma$ did have compact support. Without loss of generality, we may assume $\diam (\supp\sigma)=1,\ \supp \sigma \subseteq \B=B(0,1)$, and $L_{\B}^{\sigma}=\R^{d}$. Let
	\[
	\mu = \sigma + \mathcal{P}_{\B}^{\sigma}\mres({\R^{d}\backslash 4\B}).
	\]
	It is not hard to show that $\mu$ is also UR. If Main Theorem holds for UR measures of unbounded support, then it holds for $\mu$.
	Let $f\in L^{p}(\sigma)\subseteq L^{p}(\mu)$ and let 
	\[
	{\theta^{f}_\sigma}(x,r):=\alpha_{f\sigma}(x,r) + |f|_{x,r}\alpha_{\sigma}(x,r),
	\]
	so that, by the Main Theorem,
		\begin{equation}\label{eq:main theorem applied to mu}
		\norm{\bigg(\int_0^{\infty}\theta^f_{\mu}(x,r)^2\ \frac{dr}{r}\bigg)^{1/2}}_{L^p(\mu)}\sim\norm{f}_{L^p(\mu)} = \norm{f}_{L^p(\sigma)}.
		\end{equation}
	Observe that
	\[
	\theta_{\sigma}^{f}(x,r) = \theta_{\mu}^{f}(x,r)  \mbox{  for $x\in \supp \sigma$ and $0<r<2$}.
	\]
	Thus,
	\begin{multline*}
	\nrm{\bigg(\int_{0}^{2}\theta^{f}_{\sigma}(x,r)^2\ \frac{dr}{r}\bigg)^{1/2}}_{L^{p}(\sigma)}
	=\nrm{\bigg(\int_{0}^{2}\theta^{f}_{\mu}(x,r)^2\ \frac{dr}{r}\bigg)^{1/2}}_{L^{p}(\sigma)}\\
	\leq  \nrm{\bigg(\int_{0}^{\infty}\theta^{f}_{\mu}(x,r)^2\ \frac{dr}{r}\bigg)^{1/2}}_{L^{p}(\mu)} 
	\overset{\eqref{eq:main theorem applied to mu}}{\lec} \norm{f}_{L^p(\sigma)}.
	\end{multline*}
	Furthermore, we claim that for any $x\in \supp\sigma$ and $r>2$ we have 
		\begin{equation}\label{eq:theta sigma for large r}
		\theta^{f}_\sigma(x,r)\lesssim r^{-d}|f|_{\B}.
		\end{equation}
		Indeed, since $\supp f\subset\supp\sigma\subset\B$,
		\begin{equation}\label{eq:alpha fsigma for large r}
		\alpha_{f\sigma}(x,r)\le \frac{1}{r^{d+1}} F_{B(x,r)}(f\sigma,0)\le \frac{1}{r^d}\int_{\B} |f|\ d\sigma \sim \frac{1}{r^d}|f|_{\B},
		\end{equation}
		and also
		\begin{equation*}
		|f|_{x,r}\alpha_{\sigma}(x,r) \lesssim\frac{1}{r^d} \int_{\B} |f|\ d\sigma \sim \frac{1}{r^d}|f|_{\B}.
		\end{equation*}
		It follows from \eqref{eq:theta sigma for large r} that
		\[
		\int_{2}^{\infty} \theta^{f}_{\sigma}(x,r)^2\frac{dr}{r}\lesssim \int_{2}^{\infty}|f|_{\B}^2\frac{dr}{r^{2d+1}}\lesssim |f|_{\B}^2,
		\]
	and so 
	\[
	\nrm{\bigg(\int_{0}^{\infty}\theta^{f}_{\sigma}(x,r)^2\ \frac{dr}{r}\bigg)^{1/2}}_{L^{p}(\sigma)}
	\lesssim  \nrm{f}_{L^p(\sigma)}+ \int_{\B}|f|d\sigma\lesssim  \nrm{f}_{L^p(\sigma)} .
	\]
	
	To finish the proof we now need to show the reverse inequality. Notice that since $f$ is supported on $\supp \sigma$, $\alpha_{f\mu}(x,r)=\alpha_{f\sigma}(x,r)$ for all $x\in \supp\sigma$ and $r>0$. We can argue just as in \eqref{eq:alpha fsigma for large r} to get that for $x\in\supp\mu$ and $r\geq 2$,  
		\[
		\alpha_{f\mu}(x,r)=\alpha_{f\sigma}(x,r)
		\leq \frac1{r^{d+1}}F_{B(x,r)}(f\sigma,0)
		\lec  \frac{|f|_{\B}}{r^{d}}
		\lec \frac{|f|_{2\B}}{r^{d}}\alpha_{\sigma}(2\B),
		\]
		where we also used $\alpha_{\sigma}(2\B)\sim 1$.
	
	Hence,
	\begin{align*}
	\int_{\B}& \left(\int_{0}^{\infty}\alpha_{f\mu}(x,r)^{2}\frac{dr}{r}\right)^{\frac{p}{2}}d\mu(x)\\
	&\lec \int_{\B}\left(\int_{0}^{2}\alpha_{f\mu}(x,r)^{2}\frac{dr}{r}\right)^{\frac{p}{2}}d\mu(x)+\int_{\B}\left(\int_{2}^{\infty}\alpha_{f\mu}(x,r)^{2}\frac{dr}{r}\right)^{\frac{p}{2}}d\mu(x)\\
	&\lec \int_{\B}\left(\int_{0}^{2}\alpha_{f\sigma}(x,r)^{2}\frac{dr}{r}\right)^{\frac{p}{2}}d\mu(x)+\int_{\B}\left(\int_{2}^{\infty}(|f|_{2\B}\alpha_{\sigma}(2\B))^2\frac{dr}{r^{2d+1}}\right)^{\frac{p}{2}}d\mu(x)\\
	&\lec \int_{\B}\left(\int_{0}^{2}\alpha_{f\sigma}(x,r)^{2}\frac{dr}{r}\right)^{\frac{p}{2}}d\mu(x)+(|f|_{2\B}\alpha_{\sigma}(2\B))^{p}
	\lec \int_{\B}\left(\int_{0}^{\infty}\theta_{\sigma}^f(x,r)^{2}\frac{dr}{r}\right)^{\frac{p}{2}}d\mu(x)
	\end{align*}
	where we used \eqref{eq:alphas on two balls} in the final inequality. 
	
	Furthermore, for $x\in \R^{d}\backslash 4\B$, if $\alpha_{f\mu}(x,r)\neq 0$, then $r\geq |x|/2$ and so
	\begin{align*}
	\int_{\R^{d}\backslash 4\B} \left(\int_{0}^{\infty}\alpha_{f\mu}(x,r)^{2}\frac{dr}{r}\right)^{\frac{p}{2}}d\mu(x)
	&=\sum_{j=2}^{\infty}\int_{\R^{d}\cap (2^{j+1}\B\backslash 2^{j}\B)} \left(\int_{|x|/2}^{\infty}(|f|_{2\B}\alpha_{\sigma}(2\B))^2\frac{dr}{r^{2d+1}}\right)^{\frac{p}{2}}d\mu(x)\\
	& \lec (|f|_{2\B}\alpha_{\sigma}(2\B))^{p} \sum_{j=2}^{\infty}\int_{\R^{d}\cap (2^{j+1}\B\backslash 2^{j}\B)}|x|^{-pd} d\mu(x)
	\\ &\lec (|f|_{2\B}\alpha_{\sigma}(2\B))^{p}
	\lec\int_{\B}\left(\int_{0}^{\infty}|f|_{x,r}\alpha_{\sigma}(x,r)^{2}\frac{dr}{r}\right)^{\frac{p}{2}}d\mu(x)
	\end{align*}
	again using \eqref{eq:alphas on two balls}. These two estimates imply 
	\begin{equation}
	\label{e:alphafmu<alphafsigma}
	\nrm{\bigg(\int_{0}^{\infty}\alpha_{f\mu}(x,r)^{2}\frac{dr}{r}\bigg)^{1/2}}_{L^{p}(\mu)}
	\lec \nrm{\bigg(\int_{0}^{\infty}\theta_{\sigma}^f(x,r)^{2}\frac{dr}{r}\bigg)^{1/2}}_{L^{p}(\sigma)}.
	\end{equation}
	
	Note that for $x\in\supp\sigma$ and $r<2$, we have $\mathcal{P}_{x,r}^{\sigma}=\mathcal{P}_{x,r}^{\mu}$. For $r\geq 2$, notice that $\alpha_{\sigma}(2\B)\sim 1$, and so 
	\[
	\alpha_{\mu}(x,r)
	\leq \frac1{r^{d+1}}F_{B(x,r)}(\mu,\mathcal{P}^{\sigma}_{\B})
	=\frac1{r^{d+1}}F_{B(x,r)}(\sigma,\mathcal{P}^{\sigma}_{\B}\mres{4B})
	\lec r^{-d}\lec \frac{\alpha_{\sigma}(2\B)}{r^{d}},
	\]
	hence 
	\[
	|f|_{x,r}^{\mu}\alpha_{\mu}(x,r)
	\leq |f|_{2\B}\frac{\alpha_{\sigma}(2\B)}{r^{2d}},
	\]
	where $|f|_{x,r}^{\mu} = \int_{B(x,r)} f\ d\mu/\mu(B(x,r))$.
	Thus, just as how we proved \eqref{e:alphafmu<alphafsigma}, we can show
	\[
	\nrm{\int_{0}^{\infty}(|f|^{\mu}_{x,r})^2\alpha_{\mu}(x,r)^{2}\frac{dr}{r}}_{L^{p}(\mu)}
	\lec \nrm{\int_{0}^{\infty}|f|^2_{x,r}\alpha_{\sigma}(x,r)^{2}\frac{dr}{r}}_{L^{p}(\sigma)}.
	\]
	This, \eqref{e:alphafmu<alphafsigma} and \eqref{eq:main theorem applied to mu} imply the desired estimate:
	\begin{equation*}
	\norm{f}_{L^{p}(\sigma)}\lec \nrm{\bigg(\int_{0}^{\infty}\theta_{\sigma}^f(x,r)^{2}\frac{dr}{r}\bigg)^{1/2}}_{L^{p}(\sigma)}.
	\end{equation*}
\end{proof}

\begin{proof}[Proof of the Main Theorem using \thmref{thm:dyadic main theorem}]
	By \lemref{lem:Sigma unbounded}, we may assume that $\supp\sigma=\Sigma$ is unbounded, so that \thmref{thm:dyadic main theorem} holds.

	Let $x\in\Sigma,\ r>0$. Let $k\in \Z$ be such that $\delta^{k+1} <r\le \delta^k$, and let $Q$ be a cube in $\D_k$ containing $x$. Recall that $Q\subset B(z_Q,3\ell(Q))$. Since $r\le\ell(Q)$, we have
	\begin{equation*}
	B(x,r)\subset B(z_Q, 3\ell(Q)+r)\subset B(z_Q,4\ell(Q))=B_Q.
	\end{equation*}
	Hence, by \eqref{eq:alphas on two balls},
	\begin{equation*}
	\alpha_{f\sigma}(x,r)\lesssim \alpha_{f\sigma}(Q).
	\end{equation*}
	We also have $|f|_{x,r}\lesssim |f|_{B_Q}$, and so
	\begin{equation*}
	|f|_{x,r}\alpha_{\sigma}(x,r)\lesssim |f|_{B_Q}\alpha_{\sigma}(Q).
	\end{equation*}
	Consequently, 
	\begin{equation*}
	\int_{\delta^{k+1}}^{\delta^k} (\alpha_{f\sigma}(x,r) + |f|_{x,r}\alpha_{\sigma}(x,r))^2\ \frac{dr}{r} \lesssim \alpha_{f\sigma}(Q)^2 + |f|_{B_Q}^2\alpha_{\sigma}(Q))^2.
	\end{equation*}
	Summing over $k\in\Z$ yields
	\begin{equation*}
	\int_0^\infty(\alpha_{f\sigma}(x,r) + |f|_{x,r}\alpha_{\sigma}(x,r))^2\ \frac{dr}{r} \lesssim \sum_{x\in Q\in\D}\alpha_{f\sigma}(Q)^2 + |f|_{B_Q}^2\alpha_{\sigma}(Q))^2.
	\end{equation*}
	
	Similarly, for $x\in\Sigma,\ r>0, \delta^{k+1}<r\le \delta^k$, we may consider a cube $Q\in\D_{k+2}$ such that $x\in Q\subset B_Q\subset B(x,r)$. Mimicking the estimates above, one gets
	\begin{equation*}
	\sum_{x\in Q\in\D}\alpha_{f\sigma}(Q)^2 + |f|_{B_Q}^2\alpha_{\sigma}(Q))^2 \lesssim \int_0^\infty(\alpha_{f\sigma}(x,r) + |f|_{x,r}\alpha_{\sigma}(x,r))^2\ \frac{dr}{r}.
	\end{equation*}
	Putting the two estimates together, we get the comparability of the dyadic and continuous variants of the square function:
	\begin{equation*}
	Jf(x)^2=\sum_{x\in Q\in\D}\alpha_{f\sigma}(Q)^2 + |f|_{B_Q}^2\alpha_{\sigma}(Q))^2 \sim \int_0^\infty(\alpha_{f\sigma}(x,r) + |f|_{x,r}\alpha_{\sigma}(x,r))^2\ \frac{dr}{r}.
	\end{equation*}	
\end{proof}

\thmref{thm:dyadic main theorem} will follow from the results from the next three sections. 
From now on we assume that $\sigma$ is a uniformly rectifiable measure with unbounded support.

\section{\texorpdfstring{$\nrm{Jf}_{2}\lec \nrm{f}_{2}$}{||Jf||\_2 < C||f||\_2}}\label{sec:Jf2 < f2}
	First, we prove the estimate $\norm{Jf}_p\lesssim \norm{f}_p$ in the case $p=2$.
	\begin{prop}\label{prop:alphas bounded by f}
		Let $f\in L^2(\sigma)$. Then
		\begin{equation*}
		\sum_{Q\in\D}(\alpha_{f\sigma}(Q)^2+|f|_{B_Q}^2\alpha_{\sigma}(Q)^2)\ell(Q)^d \lesssim \norm{f}_{L^2(\sigma)}^2.
		\end{equation*}
	\end{prop}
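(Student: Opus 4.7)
The plan is to prove the two inequalities separately: $\sum_Q|f|_{B_Q}^2\alpha_\sigma(Q)^2\ell(Q)^d\lesssim\|f\|_{L^2(\sigma)}^2$ will come from Tolsa's \thmref{t:xavi} together with dyadic Carleson embedding, while $\sum_Q\alpha_{f\sigma}(Q)^2\ell(Q)^d\lesssim\|f\|_{L^2(\sigma)}^2$ needs a martingale/Haar expansion of $f$ in the spirit of \cite[Section~4]{Tol09}.

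For each $Q\in\D$ let $Q^\sharp:=R(Q)\in\D(\omega(Q))$, so that $B_Q\cap\Sigma\subseteq Q^\sharp$ and $\ell(Q^\sharp)\sim\ell(Q)$. Ahlfors regularity gives $|f|_{B_Q}\lesssim|f|_{Q^\sharp}$, and \eqref{eq:alphas on two balls} gives $\alpha_\sigma(Q)\lesssim\alpha_\sigma(Q^\sharp)$. Since only $O(1)$ cubes $Q$ of $\D$ can share the same $Q^\sharp$, the first sum is dominated by $\sum_{\omega\in\Omega}\sum_{Q^\sharp\in\D(\omega)}|f|_{Q^\sharp}^2\alpha_\sigma(Q^\sharp)^2\ell(Q^\sharp)^d$. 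Because \thmref{t:xavi} says $\alpha_\sigma(R)^2\ell(R)^d$ is a dyadic Carleson sequence on each $\D(\omega)$, the inner sum is $\lesssim\|f\|_{L^2(\sigma)}^2$ by dyadic Carleson embedding. For the $\alpha_{f\sigma}$ sum I would first split, using $|f|_{Q^\sharp}c_Q^\sigma\HH^d\mres L_Q^\sigma$ as an intermediate reference:
\begin{equation*}
\alpha_{f\sigma}(Q)\le|f|_{Q^\sharp}\alpha_\sigma(Q)+\tilde\alpha_{f\sigma}(Q),\qquad \tilde\alpha_{f\sigma}(Q):=\ell(Q)^{-d-1}\,F_{B_Q}\bigl((f-|f|_{Q^\sharp})\sigma,\,0\bigr),
\end{equation*}
so the $|f|_{Q^\sharp}\alpha_\sigma$ term is absorbed by the Carleson estimate above.

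For $\tilde\alpha_{f\sigma}$, I would fix $\omega\in\Omega$ and expand $f=|f|_{Q^\sharp}+\sum_{P\subseteq Q^\sharp,\,i}c_P^i h_P^i$ on $Q^\sharp$ using the $L^2$-normalized Haar basis $\{h_P^i\}$ adapted to the David--Christ system $\D(\omega)$, with coefficients $c_P^i:=\langle f,h_P^i\rangle$ satisfying $\sum_{P\in\D(\omega),\,i}|c_P^i|^2=\|f\|_{L^2(\sigma)}^2$ (this uses that $\sigma$ has infinite mass on unbounded $\Sigma$, so $E_kf\to0$ as $k\to-\infty$). Any $\phi\in\lip_1(B_Q)$ extends by zero to a $1$-Lipschitz function on $\R^n$, so the mean-zero of $h_P^i$ on $P$ yields, for \emph{every} $P\subseteq Q^\sharp$ (even those straddling $\partial B_Q$),
\begin{equation*}
\Bigl|\int h_P^i\,\phi\,d\sigma\Bigr|=\Bigl|\int h_P^i\bigl(\phi-\phi(z_P)\bigr)\,d\sigma\Bigr|\lesssim\ell(P)\,\|h_P^i\|_{L^1(\sigma)}\lesssim\ell(P)^{1+d/2},
\end{equation*}
whence $\tilde\alpha_{f\sigma}(Q)\,\ell(Q)^{d+1}\lesssim\sum_{P\subseteq Q^\sharp,\,i}|c_P^i|\,\ell(P)^{1+d/2}$.

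The closing step is a weighted Cauchy--Schwarz with $Q$-dependent weight $w_P(Q):=(\ell(P)/\ell(Q))^\eta$ for any fixed $\eta\in(0,2)$. Ahlfors regularity gives $\#\{P\in\D_j(\omega):P\subseteq Q^\sharp\}\sim(\ell(Q)/\ell(P))^d$, and hence $\sum_{P\subseteq Q^\sharp}\ell(P)^{2+d-\eta}\lesssim\ell(Q)^{2+d-\eta}$ (geometric series convergent at fine scales because $\eta<2$), so Cauchy--Schwarz produces
\begin{equation*}
\tilde\alpha_{f\sigma}(Q)^2\ell(Q)^d\lesssim\sum_{P\subseteq Q^\sharp,\,i}|c_P^i|^2\Bigl(\frac{\ell(P)}{\ell(Q)}\Bigr)^\eta.
\end{equation*}
Summing over $Q$ with $Q^\sharp\in\D(\omega)$ and swapping the order of summation, the inner sum $\sum_{Q:\,P\subseteq Q^\sharp(Q)}(\ell(P)/\ell(Q))^\eta$ is a geometric series in scales $\ell(Q)\ge\ell(P)$ (convergent because $\eta>0$), which evaluates to $O(1)$; orthonormality of the Haar basis then gives $\sum_Q\tilde\alpha_{f\sigma}(Q)^2\ell(Q)^d\lesssim\|f\|_{L^2(\sigma)}^2$ for each $\omega$, and summation over $\omega\in\Omega$ finishes. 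The main obstacle is precisely this weighted Cauchy--Schwarz: the weight $w_P(Q)$ must be tuned so that both geometric series --- over fine scales $P$ inside $Q^\sharp$ and over coarse scales $Q$ above $P$ --- converge simultaneously, which is exactly what the range $0<\eta<2$ allows. The surrounding bookkeeping (splitting $\alpha_{f\sigma}$, replacing $|f|_{B_Q}$ by $|f|_{Q^\sharp}$, and invoking \thmref{t:xavi}) is standard once one commits to the adjacent-grid setup of Section \ref{sec:adjacent}.
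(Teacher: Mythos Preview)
Your proof is essentially the same as the paper's: both handle the $|f|_{B_Q}^2\alpha_\sigma(Q)^2$ sum via Tolsa's Carleson condition plus Carleson embedding, and both handle the $\alpha_{f\sigma}(Q)^2$ sum by a martingale expansion on the enlarged cube $R(Q)$ followed by a weighted Cauchy--Schwarz (the paper uses $\Delta_Pf$ rather than Haar functions $h_P^i$, and takes your parameter $\eta=1$, but these are cosmetic differences).

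One small correction: in the splitting and the subsequent expansion you should use the \emph{signed} average $\avg{f}_{Q^\sharp}$ rather than $|f|_{Q^\sharp}$. The identity $f=\avg{f}_{Q^\sharp}+\sum_{P\subseteq Q^\sharp,i}c_P^ih_P^i$ on $Q^\sharp$ requires the signed average; with $|f|_{Q^\sharp}$ there is a leftover constant $\avg{f}_{Q^\sharp}-|f|_{Q^\sharp}$ that does not have the mean-zero cancellation you exploit. The fix is painless: replace $|f|_{Q^\sharp}$ by $\avg{f}_{Q^\sharp}$ in the definition of $\tilde\alpha_{f\sigma}(Q)$ and in the expansion, and note that the resulting first term $|\avg{f}_{Q^\sharp}|\,\alpha_\sigma(Q)\le |f|_{Q^\sharp}\alpha_\sigma(Q)$ is still absorbed by the Carleson estimate. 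This is exactly what the paper does in its \lemref{lem:key estimate}.
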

	
	Our main tool in the proof of \propref{prop:alphas bounded by f} are the martingale difference operators associated to systems of cubes $\D(\omega$).
	
	Given $\omega\in\Omega$, $Q\in\D(\omega),$ and $f\in L^1_{loc}(\sigma)$ we set
	\begin{equation*}
	\Delta_Q f = \sum_{P\in \Ch(Q)}\avg{f}_P\one_P - \avg{f}_Q\one_Q.
	\end{equation*}
	Observe that all $\Delta_Q f$ have zero mean, i.e. $\int \Delta_Q f\ d\sigma = 0.$
	
	It is well known (see e.g. \cite[Chapter 6.4]{grafakos2014classical}) that given $f\in L^2(\sigma)$ and some system of cubes $\D(\omega)$ we have
	\begin{equation*}
	f=\sum_{Q\in\D(\omega)}\Delta_Q f
	\end{equation*}
	with the convergence understood in the $L^2$ sense. It is crucial that $\sigma(\Sigma)=\infty$, so that $f+C\in L^2(\sigma)$ if and only if $C=0$ (in the case $\sigma(\Sigma)<\infty$ one would have to subtract from the left hand side above the average of $f$).
	
	Note that $\Delta_Q f$ are mutually orthogonal in $L^2(\sigma)$, so that
	\begin{equation}\label{eq:L2 of martingale bounded by L2 of fcn}
	\norm{f}_{L^2(\sigma)}^2 = \sum_{Q\in\D(\omega)}\norm{\Delta_Q f}_{L^2(\sigma)}^2
	\end{equation}

	Moreover, if $Q\in\D(\omega)$, then for $\sigma$-a.e. $x\in Q$
	\begin{equation}\label{eq:martingale identity}
	f(x) = \avg{f}_Q + \sum_{P\in\D(Q)}\Delta_P f(x).
	\end{equation}
	
	\begin{lemma}\label{lem:key estimate}
		Suppose $Q\in\D,$ and let $R=R(Q)\in\D(\omega(Q))$ be as in Section 2.1. Then,  for $f\in L^2(\sigma)$ we have
		\begin{equation}\label{eq:alpha fsigma estimate}
		\alpha_{f\sigma}(Q)\lesssim |\avg{f}_{R}|\alpha_{\sigma}(R) + \sum_{P\in\D(R)}\frac{\ell(P)^{1+d/2}}{\ell(Q)^{1+d}}\norm{\Delta_P f}_2.
		\end{equation}
	\end{lemma}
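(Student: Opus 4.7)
The natural candidate for the minimizing pair in $\alpha_{f\sigma}(Q)$ is the measure $\nu := \avg{f}_R\, \cL_R^{\sigma}$, i.e.\ we lift the best approximating planar measure for $\sigma$ on the comparable cube $R$ by the constant $\avg{f}_R$. The plan is to bound $F_{B_Q}(f\sigma,\nu)$ by the triangle inequality through the intermediate measure $\avg{f}_R\sigma$:
\begin{equation*}
F_{B_Q}(f\sigma, \avg{f}_R\, \cL_R^{\sigma}) \le F_{B_Q}(f\sigma, \avg{f}_R\sigma) + |\avg{f}_R|\, F_{B_Q}(\sigma, \cL_R^{\sigma}).
\end{equation*}
For the second summand, use that $B_Q\subset B_R$ with $\ell(R)\sim\ell(Q)$ (since $B_Q\cap\Sigma\subset R$) together with the monotonicity property of $F$ in \eqref{eq:alphas on two balls} to conclude $F_{B_Q}(\sigma,\cL_R^{\sigma})\le F_{B_R}(\sigma,\cL_R^{\sigma}) \lesssim \ell(R)^{d+1}\alpha_\sigma(R)\sim \ell(Q)^{d+1}\alpha_\sigma(R)$. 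Dividing by $\ell(Q)^{d+1}$ yields the first term on the right of \eqref{eq:alpha fsigma estimate}.

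The harder summand is $F_{B_Q}(f\sigma,\avg{f}_R\sigma)$. For $\phi\in\lip_1(B_Q)$, since $B_Q\cap\Sigma\subset R$, we have
\begin{equation*}
\int \phi\,(f-\avg{f}_R)\,d\sigma = \int_R \phi\,(f-\avg{f}_R)\,d\sigma.
\end{equation*}
Here I invoke the martingale identity \eqref{eq:martingale identity} for the system $\D(\omega(Q))$, which gives $f - \avg{f}_R = \sum_{P\in\D(R)}\Delta_P f$ in $L^2(\sigma\mres R)$. Exchanging sum and integral and exploiting the zero-mean property $\int\Delta_P f\,d\sigma = 0$ to subtract a constant allows one to write
\begin{equation*}
\int \phi\,\Delta_P f\,d\sigma = \int_P (\phi-\phi(z_P))\,\Delta_P f\,d\sigma.
\end{equation*}
Since $\phi$ is $1$-Lipschitz and $P\subset B(z_P,3\ell(P))$, we have $|\phi-\phi(z_P)|\lesssim \ell(P)$ on $P$, so by Cauchy--Schwarz and $\sigma(P)\lesssim \ell(P)^d$,
\begin{equation*}
\left|\int \phi\,\Delta_P f\,d\sigma\right| \lesssim \ell(P)\,\sigma(P)^{1/2}\,\|\Delta_P f\|_2 \lesssim \ell(P)^{1+d/2}\,\|\Delta_P f\|_2.
\end{equation*}

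Summing over $P\in\D(R)$, taking a sup over $\phi\in\lip_1(B_Q)$, and dividing by $\ell(Q)^{d+1}$ produces the second term in \eqref{eq:alpha fsigma estimate}. The main technical points are (i) justifying that one may insert the $L^2$-convergent martingale expansion inside $\int\phi\,\cdot\,d\sigma$ (the test function is bounded, so this is standard) and (ii) being careful that the test measure $\avg{f}_R\,\cL_R^{\sigma}$ is associated with $R$ rather than $Q$, so that all the use of planarity information must be transferred from the $\omega_0$-grid to the $\omega(Q)$-grid via the inclusion $B_Q\cap\Sigma\subset R$ and \eqref{eq:alphas on two balls}. No single step is an essential obstacle; the lemma is essentially a quantitative translation of Tolsa's dyadic martingale argument to the non-Ahlfors-regular measure $f\sigma$.
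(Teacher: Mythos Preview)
Your proposal is correct and follows essentially the same argument as the paper. The only cosmetic difference is that you take $\avg{f}_R\,\cL_R^{\sigma}$ as the candidate planar measure, whereas the paper uses $\avg{f}_R\,\cL_Q^{\sigma}$; either choice works, and after invoking \eqref{eq:alphas on two balls} both reduce to the same bound $|\avg{f}_R|\,\alpha_\sigma(R)$ for the ``planar'' term, while the martingale/zero-mean/Cauchy--Schwarz treatment of the oscillation term is identical.
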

	\begin{proof}
		Let $\varphi\in \lip_1(B_Q)$ and consider a candidate for $\cL^{f\sigma}_Q$ of the form $\avg{f}_R\cL^{\sigma}_Q$. For all $x\in B_Q\cap\supp\sigma$ we have $x\in R$, so that using \eqref{eq:martingale identity}
		\begin{multline*}
		\abs{\int\varphi(x)f(x)\ d\sigma(x) - \avg{f}_R\int\varphi(x)\ d\cL^{\sigma}_Q(x)} \\
		= \abs[2]{\int\varphi(x)\avg{f}_R + \sum_{P\in\D(R)}\varphi(x)\Delta_P f(x)\ d\sigma(x) - \int\varphi(x) \avg{f}_R\ d\cL^{\sigma}_Q(x)}\\
		\le \abs{\avg{f}_R}\abs{\int\varphi(x)\ d\sigma(x) - \int\varphi(x)\ d\cL^{\sigma}_Q(x)} + \sum_{P\in\D(R)} \abs[2]{\int \varphi(x)\Delta_P f(x)\ d\sigma(x)}
		 =: I_1 + I_2.
		\end{multline*}
		It is clear that 
		\[
		I_1\le \abs{\avg{f}_R}\alpha_{\sigma}(Q)\ell(Q)^{d+1}\overset{\eqref{eq:alphas on two balls}}{\lec} \abs{\avg{f}_R}\alpha_{\sigma}(R)\ell(Q)^{d+1},\] 
		which gives rise to the first term on the right hand side of \eqref{eq:alpha fsigma estimate}.
		
		Concerning $I_2$, we use the zero mean property of martingale difference operators, and the fact that $\varphi\in\lip_1(B_Q)$, to get
		\begin{multline*}
		I_2 = \sum_{P\in\D(R)} \abs[2]{\int \big(\varphi(x) - \varphi(z_P)\big)\Delta_P f(x)\ d\sigma(x)}\le \sum_{P\in\D(R)}\int \abs{\varphi(x) - \varphi(z_P)}\abs{\Delta_P f(x)}\ d\sigma(x)\\
		\lesssim \sum_{P\in\D(R)} \ell(P)\norm{\Delta_P f}_1\overset{\text{H\"{o}lder}}{\lesssim} \sum_{P\in\D(R)} \ell(P)^{1+d/2}\norm{\Delta_P f}_2.
		\end{multline*}
		Dividing by $\ell(Q)^{d+1}$ and taking supremum over $\varphi\in\lip_1(B_Q)$ yields \eqref{eq:alpha fsigma estimate}.
	\end{proof}
	
	\begin{proof}[Proof of \propref{prop:alphas bounded by f}]
		We begin by noting that, since $\sigma$ is uniformly rectifiable, $\alpha_{\sigma}(Q)^2\ell(Q)^d$ is a Carleson measure by the results from \cite{Tol09}, see \thmref{t:xavi}. Therefore, the estimate
		\begin{equation*}
		\sum_{Q\in\D} |f|_{B_Q}^2\alpha_{\sigma}(Q)^2\ell(Q)^d \lesssim \norm{f}_{L^2(\sigma)}^2
		\end{equation*}
		follows immediately from Carleson's embedding theorem, see e.g. \cite[Theorem 5.8]{tolsa2014analytic}, and we only need to estimate the sum involving $\alpha_{f\sigma}(Q)$.
		
		Observe that for each $\omega\in\Omega$ and $R\in \D(\omega)$ there is at most a bounded number of cubes $Q\in\D$ such that $R(Q)=R$. 
		
		Fix some $\omega\in\Omega$. Recall that $\cG(\omega)$ is the family of cubes $Q\in\D$ such that $\omega(Q)=\omega$. We apply \eqref{eq:alpha fsigma estimate} and the observation above to get
		\begin{multline*}
		\sum_{Q\in\cG(\omega)} \alpha_{f\sigma}(Q)^2\ell(Q)^d\lesssim \sum_{R\in\D(\omega)}|\avg{f}_{R}|^2\alpha_{\sigma}(R)^2\ell(R)^d + \sum_{R\in\D(\omega)}\bigg(\sum_{P\in\D(R)}\frac{\ell(P)^{1+d/2}}{\ell(R)^{1+d/2}}\norm{\Delta_P f}_2\bigg)^2\\ =: S_1 + S_2.
		\end{multline*}
		
		Concerning $S_1$, we may use Carleson's embedding theorem again to estimate $S_1\lesssim \norm{f}_2^2$.
		
		Moving on to $S_2$, we apply Cauchy-Schwarz inequality to get
		\begin{equation*}
		S_2\le \sum_{R\in\D(\omega)}\bigg(\sum_{P\in\D(R)}\frac{\ell(P)}{\ell(R)}\norm{\Delta_P f}^2_2\bigg)\bigg(\sum_{P\in\D(R)}\frac{\ell(P)^{d+1}}{\ell(R)^{d+1}}\bigg).
		\end{equation*}
		It is easy to see that, due to Ahlfors regularity of $\sigma$, $\sum_{P\in\D(R)}\frac{\ell(P)^{d+1}}{\ell(R)^{d+1}}\lesssim 1$. Thus,
		\begin{equation*}
		S_2\le \sum_{R\in\D(\omega)}\sum_{P\in\D(R)}\frac{\ell(P)}{\ell(R)}\norm{\Delta_P f}^2_2 = \sum_{P\in\D(\omega)}\norm{\Delta_P f}^2_2 \sum_{\substack{R\in\D(\omega)\\ R\supset P}}\frac{\ell(P)}{\ell(R)}\lesssim \sum_{P\in\D(\omega)}\norm{\Delta_P f}^2_2 \overset{\eqref{eq:L2 of martingale bounded by L2 of fcn}}{\lesssim} \norm{f}_2^2.
		\end{equation*}
		
		Putting the estimates above together we arrive at
		\begin{equation*}
		\sum_{Q\in\cG(\omega)}\alpha_{f\sigma}(Q)^2\ell(Q)^d\lesssim \norm{f}_2^2.
		\end{equation*}
		Summing over all $\omega\in\Omega$ (recall that $\#\Omega$ is bounded) we get the desired estimate.
	\end{proof}
	
	\section{\texorpdfstring{$\nrm{Jf}_{p}\lec \nrm{f}_{p}$ for $1<p<\infty$}{||Jf||\_p < C||f||\_p for 1<p<infty}}\label{sec:Jfp < fp}
	In this section we use the estimate $||Jf||_{2}\lec \nrm{f}_{2}$ to prove $||Jf||_{p}\lec \nrm{f}_{p}$ for general $1<p<\infty$. More precisely, we will show a localized version of the estimate, which implies the global estimate via a limiting argument.
	
	Fix an arbitrary $Q_0\in\D$ and set
	\begin{equation*}
	J_{0}f(x) := \bigg( \sum_{x\in Q\in\D(Q_0)} \alpha_{f\sigma}(Q)^2 + |f|_{B_Q}^2\alpha_{\sigma}(Q)^2\bigg)^{1/2}.
	\end{equation*}
	
	\begin{prop}\label{prop:local Lp estimate}
		Let $1<p<\infty$ and $f\in L^p(\sigma)$. Then,
		\begin{equation*}
		\norm{J_0 f}_{L^p(Q_0)}\lesssim_p \norm{f}_{L^p(B_{Q_0})}.
		\end{equation*}
	\end{prop}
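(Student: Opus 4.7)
The plan is to upgrade the $L^2$ bound of Proposition 3.1 to a local $L^p$ bound via a good-$\lambda$ inequality comparing $J_0 f$ with the Hardy--Littlewood maximal function $M_\sigma f$ with respect to $\sigma$. Concretely, I aim to establish that for some $\alpha>0$, every $\lambda>0$, and every sufficiently small $\gamma>0$,
\begin{equation*}
    \sigma\{x\in Q_0:J_0 f(x)>2\lambda,\,M_\sigma f(x)\le\gamma\lambda\}\;\le\;C\gamma^\alpha\,\sigma\{x\in Q_0:J_0 f(x)>\lambda\}.
\end{equation*}
Once this is in hand, the standard distribution-function manipulation combined with the $L^p$ boundedness of $M_\sigma$ gives $\nrm{J_0 f}_{L^p(Q_0)}\lesssim_p\nrm{f}_{L^p(B_{Q_0})}$ after choosing $\gamma$ small and absorbing; a priori finiteness of the left-hand side is secured by first truncating $J_0$ to cubes of side length in a finite range and passing to the limit.

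For the good-$\lambda$ I would use a stopping-time decomposition from above. For $R\in\D(Q_0)$ set
\begin{equation*}
    T_R:=\sum_{\substack{Q\in\D\\R\subseteq Q\subseteq Q_0}}\bigl(\alpha_{f\sigma}(Q)^2+|f|_{B_Q}^2\alpha_\sigma(Q)^2\bigr);
\end{equation*}
this is non-decreasing as $R$ shrinks along a chain and satisfies $T_R\le J_0 f(x)^2$ for every $x\in R$. Let $\mathcal{F}$ be the family of maximal $R\in\D(Q_0)$ with $T_R>\lambda^2$. The cubes in $\mathcal{F}$ are pairwise disjoint, $\{J_0 f>\lambda\}\cap Q_0\subseteq\bigcup_{R\in\mathcal{F}}R$, and the parent $\hat R$ of each $R\in\mathcal{F}$ satisfies $T_{\hat R}\le\lambda^2$. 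If some $x_0\in R$ has $M_\sigma f(x_0)\le\gamma\lambda$, then $|f|_{B_R}\le\gamma\lambda$, and from the crude bounds $\alpha_{f\sigma}(R)\lesssim|f|_{B_R}$ and $\alpha_\sigma(R)\lesssim 1$ we get $T_R\le\lambda^2(1+C\gamma^2)\le 2\lambda^2$ for $\gamma$ small. Hence, for $x\in R$ with $J_0 f(x)>2\lambda$ and $M_\sigma f(x)\le\gamma\lambda$, the local tail $J_0 f(x)^2-T_R$ exceeds $2\lambda^2$, so by Chebyshev
\begin{equation*}
    \sigma\{x\in R:J_0 f>2\lambda,\,M_\sigma f\le\gamma\lambda\}\le\frac{1}{2\lambda^2}\sum_{\substack{Q\in\D\\Q\subsetneq R}}\bigl(\alpha_{f\sigma}(Q)^2+|f|_{B_Q}^2\alpha_\sigma(Q)^2\bigr)\sigma(Q).
\end{equation*}

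For $Q\subsetneq R$ the ball $B_Q\subset CB_R$ for an absolute constant $C$, so each summand depends only on $f\chi_{CB_R}$, and Proposition 3.1 applied to this truncation bounds the right-hand side by a multiple of $\lambda^{-2}\int_{CB_R}|f|^2\,d\sigma$. The main obstacle is then to convert the $L^1$-type smallness $|f|_{CB_R}\lesssim\gamma\lambda$ at our disposal into the $L^2$ bound $\int_{CB_R}|f|^2\,d\sigma\lesssim\gamma^\alpha\lambda^2\sigma(R)$. To do this I would subtract $c:=\avg{f}_{CB_R}$ (so $|c|\lesssim\gamma\lambda$) and use the near-subadditivity
\begin{equation*}
    \alpha_{f\sigma}(Q)\;\lesssim\;\alpha_{(f-c)\sigma}(Q)+|c|\,\alpha_\sigma(Q),
\end{equation*}
which one obtains by testing the optimal plane and density for $\sigma$ at scale $B_Q$ as an approximation of $f\sigma$. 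The constant piece contributes $|c|^2\sigma(R)\lesssim(\gamma\lambda)^2\sigma(R)$, while for the zero-average remainder I would run a Calder\'on--Zygmund decomposition on $CB_R$ at level $\gamma\lambda$: the good part is pointwise $\lesssim\gamma\lambda$ and contributes the desired $L^2$ bound via Proposition 3.1, while the bad Calder\'on--Zygmund cubes lie inside $\{M_\sigma f>\gamma\lambda\}$ and can therefore be excluded from the set appearing on the left of the good-$\lambda$ estimate.

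The expected technical difficulty of the argument lies exactly in this last step, since the $\alpha$-number does not behave additively under splittings of $f$ because the approximating planes for $u\sigma$ and $v\sigma$ may differ; handling the error produced when forcing the two measures to share a common approximating plane is the key point. Once the per-$R$ good-$\lambda$ estimate is in place, summing over $R\in\mathcal{F}$ and using $\sum_{R\in\mathcal{F}}\sigma(R)\le\sigma\{J_0 f>\lambda\}$ completes the good-$\lambda$ inequality and hence the proposition.
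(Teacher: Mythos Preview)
Your overall architecture---good-$\lambda$ inequality, stopping cubes, Chebyshev, Calder\'on--Zygmund decomposition, and appeal to the $L^2$ bound on the good part---is exactly the paper's strategy. Two points, however, are not yet in working order.

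First, the ``near-subadditivity'' $\alpha_{f\sigma}(Q)\lesssim\alpha_{(f-c)\sigma}(Q)+|c|\,\alpha_\sigma(Q)$ does not follow from testing with the $\sigma$-optimal plane. That test yields
\[
\alpha_{f\sigma}(Q)\;\le\;\frac{1}{\ell(Q)^{d+1}}\inf_{a\in\R}F_{B_Q}\bigl((f-c)\sigma,\,a\,\mathcal H^d\!\mres L_Q^{\sigma}\bigr)\;+\;|c|\,\alpha_\sigma(Q),
\]
and the first term is \emph{not} $\alpha_{(f-c)\sigma}(Q)$, since the plane is pinned to $L_Q^{\sigma}$. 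The paper avoids this altogether: it performs the CZ decomposition $f=g+b$ directly (no constant subtraction) and tests with the plane optimal for $g\sigma$, obtaining $\alpha_{f\sigma}(Q)\lesssim\alpha_{g\sigma}(Q)+(\text{contribution from }b)$, which is a legitimate bound because $\alpha_{g\sigma}(Q)$ appears with its own optimal plane.

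Second, and more seriously, ``the bad CZ cubes lie inside $\{M_\sigma f>\gamma\lambda\}$ and can therefore be excluded from the left'' is not sufficient to control the bad part. After the splitting one must estimate, for $x\notin\bigcup_j Q_j$,
\[
\sum_{x\in Q\subsetneq R}\Bigl(\sum_{j:\,Q_j\cap B_Q\neq\varnothing}\frac{\ell(Q_j)^{d+1}}{\ell(Q)^{d+1}}\Bigr)^{2}.
\]
If $x$ lies arbitrarily close to $\partial Q_j$, there are cubes $Q\ni x$ with $\ell(Q)\ll\ell(Q_j)$ for which $B_Q\cap Q_j\neq\varnothing$, and the inner ratio blows up; the sum then diverges. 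Simply removing $\bigcup_j Q_j$ from the integration set does not prevent this. The paper's fix is to remove an $\eta$-\emph{neighborhood} $N_\eta=\bigcup_j\{\dist(\cdot,Q_j)<\eta\ell(Q_j)\}$ of the CZ cubes; thin boundaries guarantee $\sigma(N_\eta\setminus\bigcup_j Q_j)\lesssim\eta^{\gamma}\sigma(R)$, and on the remaining set every $Q$ meeting some $Q_j$ is forced to satisfy $\ell(Q)\gtrsim_\eta\ell(Q_j)$, after which the double sum becomes a convergent geometric series. This buffer-zone device is the genuine technical content of the step you flag as the ``expected difficulty''; the non-additivity of planes, by contrast, is handled in one line.
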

	The proposition follows easily from a good-lambda inequality stated below. Let $M$ denote the non-centered maximal Hardy-Littlewood operator with respect to $\sigma$, i.e.
	\begin{equation*}
	Mf(x) = \sup\{|f|_B\ :\ x\in B,\ B\ \text{is a ball}\}.
	\end{equation*}
Since $\sigma$ is Ahlfors regular, the operator $M$ is bounded on $L^p(\sigma)$ for $p>1$, see e.g. \cite[Theorem 2.6, Remark 2.7]{tolsa2014analytic}.
	\begin{lemma}\label{lem:good lambda}
Let $f\in L^1_{loc}(\sigma)$. For any $\alpha>1$ there exists $\varepsilon=\varepsilon(\alpha)>0$ such that for all $\lambda>0$
		\begin{equation}\label{eq:good lambda}
		\sigma(\{x\in Q_0\ :\ J_0f(x)>\alpha\lambda,\ Mf(x)\le \varepsilon\lambda \})
		 \le \frac{9}{10} \sigma(\{x\in Q_0\ :\ J_0f(x)>\lambda \}).
		\end{equation}
	\end{lemma}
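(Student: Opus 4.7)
My approach is a standard stopping-time good-lambda argument. For $Q \in \D(Q_0)$ define the partial sum
\begin{equation*}
T(Q)^2 := \sum_{Q \subseteq Q' \subseteq Q_0}\bigl(\alpha_{f\sigma}(Q')^2 + |f|_{B_{Q'}}^2\alpha_\sigma(Q')^2\bigr),
\end{equation*}
which depends only on the ancestors of $Q$. Set $E_\lambda := \{J_0 f > \lambda\}\cap Q_0$. For $x \in E_\lambda$, $T(Q)$ is non-decreasing as $Q$ descends through the ancestors of $x$ and tends to $J_0 f(x) > \lambda$, so there is a largest $Q_x \in \D(Q_0)$ containing $x$ with $T(Q_x) > \lambda$. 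Since $T$ is $x$-independent, the distinct $\{Q_i\}:=\{Q_x:x\in E_\lambda\}$ are pairwise disjoint, and every $y \in Q_i$ has $J_0 f(y)^2 \geq T(Q_i)^2 > \lambda^2$; hence $Q_i \subseteq E_\lambda$, the $Q_i$ tile $E_\lambda$ up to null sets, and $\sum_i \sigma(Q_i) = \sigma(E_\lambda)$. By maximality, the parent $\widehat{Q_i}$ satisfies $T(\widehat{Q_i}) \leq \lambda$, so for every $y \in Q_i$,
\begin{equation*}
J_0 f(y)^2 = T(\widehat{Q_i})^2 + J_0^{\leq Q_i}f(y)^2 \leq \lambda^2 + J_0^{\leq Q_i}f(y)^2,
\end{equation*}
where $J_0^{\leq Q_i}f(y)^2 := \sum_{y \in Q \in \D(Q_i)}(\alpha_{f\sigma}(Q)^2 + |f|_{B_Q}^2\alpha_\sigma(Q)^2)$. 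Thus $\{J_0f>\alpha\lambda\}\cap Q_i \subseteq \{J_0^{\leq Q_i}f>\sqrt{\alpha^2-1}\,\lambda\}$, and by Chebyshev and summation the lemma reduces to the uniform estimate
\begin{equation*}
\int_{Q_i \cap \{Mf \leq \varepsilon\lambda\}} J_0^{\leq Q_i}f(y)^2\, d\sigma(y) \leq C(\varepsilon\lambda)^2 \sigma(Q_i),
\end{equation*}
with $C$ independent of $i$: then choosing $\varepsilon=\varepsilon(\alpha)$ with $C\varepsilon^2/(\alpha^2-1)\leq 9/10$ finishes the proof.

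For the $|f|_{B_Q}^2\alpha_\sigma(Q)^2$ piece of the integrand, the pointwise bound $|f|_{B_Q} \leq Mf(y) \leq \varepsilon\lambda$ (valid on $\{Mf \leq \varepsilon\lambda\}$ for every $Q \ni y$) combined with the Carleson estimate $\sum_{Q \in \D(Q_i)}\alpha_\sigma(Q)^2\sigma(Q) \lesssim \sigma(Q_i)$ from \thmref{t:xavi} is immediate. For the $\alpha_{f\sigma}(Q)^2$ piece I exploit that $Q \in \D(Q_i)$ forces $B_Q \subseteq 2B_{Q_i}$, so $\alpha_{f\sigma}(Q) = \alpha_{(f\one_{2B_{Q_i}})\sigma}(Q)$, and split $f = f_{\text{sm}} + f_{\text{lg}}$ with $f_{\text{sm}} = f\one_{\{|f|\leq\varepsilon\lambda\}}$. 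The small part is then handled by \propref{prop:alphas bounded by f} applied to $f_{\text{sm}}\one_{2B_{Q_i}}$, since $\|f_{\text{sm}}\one_{2B_{Q_i}}\|_2^2 \leq (\varepsilon\lambda)^2\sigma(2B_{Q_i}) \lesssim (\varepsilon\lambda)^2\sigma(Q_i)$.

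The main obstacle is the large part $f_{\text{lg}}$: only its $L^1$ norm on $2B_{Q_i}$ is controlled ($\lesssim \varepsilon\lambda\sigma(Q_i)$, via $|f|_{2B_{Q_i}} \leq Mf(x_i) \leq \varepsilon\lambda$ for any $x_i$ in the assumed non-empty set $Q_i \cap \{Mf \leq \varepsilon\lambda\}$; cubes where this set is empty contribute nothing), while its $L^2$ norm is not. I plan to absorb this term by a further Calder\'on-Zygmund decomposition of $2B_{Q_i}$ at level $\varepsilon\lambda$ into the maximal dyadic cubes $\{R_k\}$ with $|f|_{B_{R_k}} > \varepsilon\lambda$; these are disjoint from $\{Mf \leq \varepsilon\lambda\}$, satisfy $\sum_k\sigma(R_k) \lesssim \sigma(Q_i)$, and on their complement within $2B_{Q_i}$ one has $|f| \leq \varepsilon\lambda$ by Lebesgue differentiation. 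The technical heart of the proof will be upgrading the $L^2$ bound of \propref{prop:alphas bounded by f} to a weak-$(1,1)$-type endpoint estimate for the $\alpha_{f\sigma}$ square function applied to $f_{\text{lg}}$, in the spirit of the classical CZ passage from $L^2$ to weak-$(1,1)$ for singular integrals; this weak-type step is where I expect the main difficulty to lie.
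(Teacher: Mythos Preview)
Your stopping-time decomposition via the partial sums $T(Q)$ is correct and is actually a cleaner alternative to the paper's density-based stopping (the paper takes maximal $S\in\D(Q_0)$ with $\sigma(S\cap E_\lambda)\ge 0.99\,\sigma(S)$); both reduce the lemma to the same kind of local estimate. Your treatment of the $|f|_{B_Q}^2\alpha_\sigma(Q)^2$ piece via Theorem~\ref{t:xavi} is also fine.

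The genuine gap is in the $\alpha_{f\sigma}$ piece, and it has two aspects.

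First, the level-set split $f=f_{\rm sm}+f_{\rm lg}$ is not the decomposition that works, and it cannot be used additively the way you suggest: there is no useful sublinearity $\alpha_{f\sigma}(Q)\le\alpha_{f_{\rm sm}\sigma}(Q)+\alpha_{f_{\rm lg}\sigma}(Q)$, because the minimizing planes for the two summands need not coincide. The paper bypasses this by going directly to the Calder\'on--Zygmund split $f=g+b$ (with $\|g\|_\infty\lesssim\varepsilon\lambda$ and $b=\sum_j b_j$ mean-zero on the CZ cubes $Q_j$) and proving the \emph{pointwise} estimate
\[
\alpha_{f\sigma}(Q)\lesssim\alpha_{g\sigma}(Q)+\varepsilon\lambda\sum_{j:\,Q_j\cap B_Q\ne\varnothing}\frac{\ell(Q_j)^{d+1}}{\ell(Q)^{d+1}},
\]
using the zero means of the $b_j$ (Lemma~\ref{lem:estimate of alpha fsigma}). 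Your $f_{\rm lg}$ has no cancellation, so no such bound is available for it.

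Second, and more seriously, the domain $Q_i\cap\{Mf\le\varepsilon\lambda\}$ is the wrong set on which to seek the $L^2$ bound, and the ``weak-$(1,1)$'' heuristic does not close the argument. A point $y$ with $Mf(y)\le\varepsilon\lambda$ can sit arbitrarily close to the boundary of a \emph{large} CZ cube $Q_j$; for small $Q\ni y$ with $B_Q\cap Q_j\ne\varnothing$ the error term $\ell(Q_j)^{d+1}/\ell(Q)^{d+1}$ blows up and the square sum over scales diverges. (A weak-$(1,1)$ bound, even if true, controls a distribution function, not the $L^2$ integral you have reduced to.) The paper's remedy is to integrate not over $\{Mf\le\varepsilon\lambda\}$ but over $S_1:=S\setminus N_\eta$, where $N_\eta=\bigcup_j\{x:\dist(x,Q_j)<\eta\,\ell(Q_j)\}$ is a small buffer around the CZ cubes. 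On $S_1$ one has $\ell(Q)\gtrsim_\eta\ell(Q_j)$ whenever $Q\cap S_1\ne\varnothing$ and $B_Q\cap Q_j\ne\varnothing$, so the error term is a geometric series and the square sum is $\lesssim_\eta\sigma(S_1)$. The price is that $N_\eta$ protrudes beyond $\bigcup_jQ_j$, and here the \emph{thin boundaries} property \eqref{eq:thin boundaries} is indispensable: it gives $\sigma(N_\eta\setminus\bigcup_jQ_j)\le\eta^\gamma\sum_j\sigma(Q_j)\lesssim\eta^\gamma\sigma(S)$, so that $\sigma(S_1)\ge\tfrac12\sigma(S)$ for $\eta$ small. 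Note that the naive dilation $\bigcup_j 2Q_j$ used in classical weak-$(1,1)$ arguments would \emph{not} work, since $\sum_j\sigma(Q_j)$ can be comparable to $\sigma(S)$. This buffer-zone-plus-thin-boundaries step is exactly the missing ``technical heart'' you anticipated; once you insert it (and drop the preliminary level-set split in favor of a direct CZ decomposition), your argument becomes the paper's.
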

	Let us show how to use the above to prove \propref{prop:local Lp estimate}.
	\begin{proof}[Proof of \propref{prop:local Lp estimate}]
		Note that $J_0f = J_0(f\one_{B_{Q_0}})$, so without loss of generality we may assume that $\supp f\subset B_{Q_0}$. Let $\alpha=\alpha(p)>1$ be so close to $1$ that $0.9\alpha^p<0.95$, and let $\varepsilon=\varepsilon(\alpha)$ be as in \lemref{lem:good lambda}. We use the layer cake representation to get
		\begin{multline*}
		\int_{Q_0} J_0f(x)^p\ d\sigma(x) = p \int_0^{\infty} \lambda^{p-1}\sigma(\{x\in Q_0\ :\ J_0f(x)>\lambda \})\ d\lambda\\
		 = p\alpha^p \int_0^{\infty} \lambda^{p-1}\sigma(\{x\in Q_0\ :\ J_0f(x)>\alpha\lambda \})\ d\lambda\\
		 \le p\alpha^p \int_0^{\infty} \lambda^{p-1}\sigma(\{x\in Q_0\ :\ J_0f(x)>\alpha\lambda,\ Mf(x)\le\varepsilon\lambda \})\ d\lambda\\
		  \qquad\qquad\qquad + p\alpha^p \int_0^{\infty} \lambda^{p-1}\sigma(\{x\in Q_0\ :\ Mf(x)>\varepsilon\lambda \})\ d\lambda\\
		  \overset{\eqref{eq:good lambda}}{\le} \frac{9}{10} p\alpha^p \int_0^{\infty} \lambda^{p-1}\sigma(\{x\in Q_0\ :\ J_0f(x)>\lambda \})\ d\lambda + \alpha^p\varepsilon^{-p} \int_{Q_0} Mf(x)^p\ d\sigma(x)\\
		  \le \frac{19}{20} p \int_0^{\infty} \lambda^{p-1}\sigma(\{x\in Q_0\ :\ J_0f(x)>\lambda \})\ d\lambda + \alpha^p\varepsilon^{-p} \int_{Q_0} Mf(x)^p\ d\sigma(x)\\
		  = \frac{19}{20}\int_{Q_0} J_0f(x)^p\ d\sigma(x) + \alpha^p\varepsilon^{-p} \int_{Q_0} Mf(x)^p\ d\sigma(x).
		\end{multline*}
		Absorbing the first term from the right hand side into the left hand side, we arrive at
		\begin{equation*}
		\int_{Q_0} J_0f(x)^p\ d\sigma(x) \le 20 \alpha^p\varepsilon^{-p} \int Mf(x)^p\ d\sigma(x).
		\end{equation*}
		We use the $L^p$ boundedness of $M$ and the assumption $\supp f\subset B_{Q_0}$ to conclude
		\begin{equation*}
		\int_{Q_0} J_0f(x)^p\ d\sigma(x) \lesssim_{\alpha,\varepsilon} \int_{B_{Q_0}} f(x)^p\ d\sigma(x).
		\end{equation*}
	\end{proof}
	The remainder of this section is dedicated to proving \lemref{lem:good lambda}. 
	\subsection{Preliminaries}
	Fix $\alpha>1$ and $\lambda>0$. First, we set
		\begin{equation*}
		E_{\lambda} = \{x\in Q_0\ :\ J_0f(x)>\lambda \}.
		\end{equation*}
	Consider the covering of $E_{\lambda}$ with a family of cubes $\sC_{\lambda}\subset \D(Q_0)$ such that for every $S \in \sC_{\lambda}$ we have
	\begin{equation*}
	\sigma(S\cap E_{\lambda})\ge 0.99\sigma(S)
	\end{equation*}
	and $S$ is the maximal cube with this property. Since the cubes from $\sC_{\lambda}$ are pairwise disjoint, to get \eqref{eq:good lambda} it is enough to find $\varepsilon=\varepsilon(\alpha)$ such that for each $S\in\sC_{\lambda}$ we have
		\begin{equation}\label{eq:good lambda in R}
		\sigma(\{x\in S\ :\ J_0f(x)>\alpha\lambda,\ Mf(x)\le \varepsilon\lambda \})
		\le \frac{8}{10} \sigma(S).
		\end{equation}
	Fix $S\in\sC_{\lambda}$. Without loss of generality assume that
	\begin{equation}\label{eq:large set with maximal bdd}
	\sigma(\{x\in S\ :\ Mf(x)\le\varepsilon\lambda \})
	> \frac{8}{10} \sigma(S),
	\end{equation}
	otherwise there is nothing to prove.
	
	Given $x\in S$, we split the sum from the definition of $J_0f(x)$ into two parts:
	\begin{multline}\label{eq:J0 decomposition}
	J_{0}f(x)^2 = \sum_{x\in Q\in\D(S)} \big(\alpha_{f\sigma}(Q)^2 + |f|_{B_Q}^2\alpha_{\sigma}(Q)^2\big)
	 + \sum_{S\subsetneq Q\in\D(Q_0)}\big( \alpha_{f\sigma}(Q)^2 + |f|_{B_Q}^2\alpha_{\sigma}(Q)^2\big)\\ =: J_1 f(x)^2 + J_2 f(x)^2.
	\end{multline}
	Clearly, $J_2f(x)\equiv J_2 f$ is just a constant. By the definition of $\sC_{\lambda}$ there exists  $y\in \hat{S}$ (where $\hat{S}$ is the parent of $S$) such that $y\not\in E_{\lambda}$.  By the definition of $E_{\lambda}$, we get that
	\begin{equation*}
	J_2f\le J_0 f(y)\le\lambda.
	\end{equation*}
	
	We will show the following.
	\begin{lemma}\label{lem:set S'}
		There exists a set $S_1\subset S$ such that $\sigma(S_1)\ge 0.5 \sigma(S)$ and 
		\begin{equation*}
		\int_{S_1} J_1 f(x)^2\ d\sigma(x)\lesssim \varepsilon^2\lambda^2\sigma(S_1)
		\end{equation*}
	\end{lemma}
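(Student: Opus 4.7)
The plan is to reduce to the integral estimate
\begin{equation*}
\int_G J_1 f(x)^2\,d\sigma(x) \lesssim \varepsilon^2\lambda^2\sigma(S),
\end{equation*}
where $G := \{x \in S : Mf(x) \le \varepsilon\lambda\}$ satisfies $\sigma(G) > 0.8\sigma(S)$ by \eqref{eq:large set with maximal bdd}. Once this is known, a Chebyshev argument shows that $S_1 := \{x \in G : J_1 f(x) \le K\varepsilon\lambda\}$ has $\sigma(S_1) \ge 0.6\sigma(S) > 0.5\sigma(S)$ for $K$ sufficiently large, and the pointwise bound on $S_1$ gives $\int_{S_1} J_1 f^2\,d\sigma \le K^2\varepsilon^2\lambda^2\sigma(S_1) \lesssim \varepsilon^2\lambda^2\sigma(S_1)$.

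Unfolding by Fubini,
\[
\int_G J_1 f^2\,d\sigma = \sum_{Q \in \D(S)}\bigl(\alpha_{f\sigma}(Q)^2 + |f|_{B_Q}^2\alpha_\sigma(Q)^2\bigr)\sigma(Q \cap G).
\]
Only cubes with $Q \cap G \neq \varnothing$ contribute, and any such $Q$ contains a point $x$ with $Mf(x) \le \varepsilon\lambda$, forcing $|f|_{B_Q} \le \varepsilon\lambda$. Hence the second summand is bounded by $\varepsilon^2\lambda^2\sum_{Q \in \D(S)}\alpha_\sigma(Q)^2\sigma(Q) \lesssim \varepsilon^2\lambda^2\sigma(S)$ by Theorem \ref{t:xavi}. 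For the $\alpha_{f\sigma}$ summand, I would perform a Calder\'on--Zygmund decomposition $f = g + b$ at level $C\varepsilon\lambda$ with respect to the adjacent system $\D(\omega(S))$, starting from $R(S) \supset B_S \cap \Sigma$. Since $|f|_{R(S)} \lesssim \varepsilon\lambda$, the stopping procedure produces disjoint maximal cubes $\{Q_j\}$ with $|f|_{Q_j} > C\varepsilon\lambda$, and for $C$ large enough this forces $Q_j \cap G = \varnothing$ (a point of $G$ in $Q_j$ would yield $|f|_{Q_j} \lesssim |f|_{B_{Q_j}} \le \varepsilon\lambda$). The good part then satisfies $|g| \lesssim \varepsilon\lambda$ pointwise and $\supp g \subset R(S)$, so $\|g\|_{L^2(\sigma)}^2 \lesssim \varepsilon^2\lambda^2\sigma(S)$; Proposition \ref{prop:alphas bounded by f} applied to $g$ therefore bounds $\sum_Q \alpha_{g\sigma}(Q)^2\sigma(Q)$ by the same quantity.

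The main obstacle is the bad part $b = \sum_j b_j$, with $b_j$ mean-zero, supported in $Q_j$, and $\|b_j\|_{L^1(\sigma)} \lesssim \varepsilon\lambda\sigma(Q_j)$. For any good $Q \in \D(S)$ the constraints $Q \cap G \neq \varnothing$ and $Q_j \cap G = \varnothing$ rule out $Q \subset Q_j$. Testing the definition of $\alpha_{b_j\sigma}(Q)$ against the candidate $c=0$ and a $1$-Lipschitz $\varphi$ supported on $B_Q$, the mean-zero property of $b_j$ together with the oscillation estimate $\|\varphi - \avg{\varphi}_{Q_j}\|_{L^\infty(Q_j)} \lesssim \min(\ell(Q_j), \ell(Q))$ yields, whenever $Q_j \cap B_Q \neq \varnothing$,
\begin{equation*}
\alpha_{b_j\sigma}(Q) \lesssim
\begin{cases}
\varepsilon\lambda\, \bigl(\ell(Q_j)/\ell(Q)\bigr)^{d+1} & \text{if }\ell(Q_j)\le\ell(Q),\\[2pt]
\varepsilon\lambda\, \bigl(\ell(Q_j)/\ell(Q)\bigr)^{d} & \text{if }\ell(Q_j)>\ell(Q),
\end{cases}
\end{equation*}
and in the second case only $O(1)$ such $Q_j$ exist per scale (by the disjointness of the $Q_j$ and Ahlfors regularity). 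Applying Cauchy--Schwarz with weights $(\ell(Q_j)/\ell(Q))^d$ and then swapping the order of summation (in the spirit of the treatment of $S_2$ in the proof of Proposition \ref{prop:alphas bounded by f}) produces
\begin{equation*}
\sum_{\substack{Q \in \D(S)\\ Q \cap G \neq \varnothing}}\alpha_{b\sigma}(Q)^2\sigma(Q) \lesssim \varepsilon^2\lambda^2\sum_j\sigma(Q_j) \le \varepsilon^2\lambda^2\,\sigma\Bigl(\bigcup_j Q_j\Bigr) \lesssim \varepsilon^2\lambda^2\sigma(S),
\end{equation*}
completing the integral estimate.
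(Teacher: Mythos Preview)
Your overall architecture (Calder\'on--Zygmund decomposition on $R(S)$, handling $g$ via Proposition~\ref{prop:alphas bounded by f}, handling $b$ via cancellation and a swap of summation) matches the paper's, and the reductions for the $|f|_{B_Q}^2\alpha_\sigma(Q)^2$ part and for the good function $g$ are fine. The gap is in the bad part, specifically the regime $\ell(Q_j)>\ell(Q)$.

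Your set $G=\{Mf\le\varepsilon\lambda\}$ only guarantees $Q\not\subset Q_j$; it does \emph{not} prevent $B_Q$ from meeting a CZ cube $Q_j$ that is much larger than $Q$. In that case your bound $\alpha_{b_j\sigma}(Q)\lesssim\varepsilon\lambda\,(\ell(Q_j)/\ell(Q))^d$ is correct but too weak. Even granting that for each $Q$ there are only $O(1)$ such large $Q_j$ (this is actually true in total, not merely ``per scale''), after swapping order the contribution is controlled by
\[
\varepsilon^2\lambda^2\sum_j \ell(Q_j)^{2d}\sum_{\substack{Q:\ \ell(Q)<\ell(Q_j)\\ B_Q\cap Q_j\neq\varnothing,\ Q\cap G\neq\varnothing}}\ell(Q)^{-2d}\,\sigma(Q\cap G),
\]
and the inner sum diverges: at scale $\delta^k\ll\ell(Q_j)$ one has $\sum_Q\sigma(Q\cap G)\lesssim (\delta^k/\ell(Q_j))^\gamma\ell(Q_j)^d$ by the thin-boundary estimate, but $\gamma<1<2d$, so the factor $\delta^{-2kd}$ wins. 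The ``$S_2$-style'' Cauchy--Schwarz you invoke needs the weight sum $\sum_j(\ell(Q_j)/\ell(Q))^d$ to be $O(1)$, and that fails the moment a single large $Q_j$ appears.

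This is precisely why the paper does not integrate over $G$ but over $S_1=S\setminus N_\eta$, with $N_\eta$ the $\eta\ell(Q_j)$-neighbourhood of $\bigcup_j Q_j$. Then $Q\cap S_1\neq\varnothing$ together with $B_Q\cap Q_j\neq\varnothing$ forces $\ell(Q)\gtrsim_\eta\ell(Q_j)$, so the large-$Q_j$ case never arises and the summation closes as in your small-$Q_j$ computation. The thin-boundary property~\eqref{eq:thin boundaries} is what ensures $\sigma(N_\eta)$ is small enough that $\sigma(S_1)\ge 0.5\,\sigma(S)$. Your argument can be repaired by replacing $G$ with $G\setminus N_\eta$ (or simply with $S\setminus N_\eta$, since the bound $|f|_{B_Q}\lesssim\varepsilon\lambda$ still holds for cubes meeting it), but as written the summation over the bad part does not converge.
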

	The estimate \eqref{eq:good lambda in R} follows from the above easily. Indeed, using Chebyshev, we can find $S_2\subset S_1$ such that for all $x\in S_2$ we have $J_1 f(x)\lesssim\varepsilon\lambda$ and $\sigma(S_2)\ge0.5\sigma(S_1)\ge 0.2\sigma(S)$. Then, choosing $\varepsilon=\varepsilon(\alpha)$ small enough, \eqref{eq:J0 decomposition} gives $J_0f(x)^2\le\lambda^2 + C\varepsilon^2\lambda^2\le \alpha^2\lambda^2$ on $S_2$, so that
	\begin{equation*}
	\sigma(\{x\in S\ :\ J_0f(x)>\alpha\lambda,\ Mf(x)\le\varepsilon\lambda \})
	\le \sigma(S\setminus S_2)\le  \frac{8}{10}\sigma(S).
	\end{equation*}
	So our goal is to prove \lemref{lem:set S'}.
	
	\subsection{\texorpdfstring{Calder\'{o}n}{Calderon}-Zygmund decomposition}
	Let $R=R(S)$ be as in Section \ref{sec:adjacent}, so that $B_S\cap \supp\sigma\subset R$. We consider a variant of the Calder\'{o}n-Zygmund decomposition of $f\one_R$ with respect to $\D(R)$ at the level $2\varepsilon\lambda$.
	
	First, let $\{Q_j\}_j\subset\D(R)$ be maximal cubes satisfying $|f|_{B_{Q_j}}\ge 2 \varepsilon\lambda$.
	Note that for all $x\in Q_j$ (and recalling that $M$ is the non-centered maximal function) we have
	\begin{equation*}
	Mf(x)\ge |f|_{B_{Q_j}}\ge 2\varepsilon\lambda.
	\end{equation*}
	Hence, $\bigcup_j Q_j\subset \{x\in S\ :\ Mf(x)\ge 2\varepsilon\lambda\}$, and so 
	\begin{multline}\label{eq:size of Q_j 1}
	\sigma(R\setminus\bigcup_j Q_j)\ge \sigma(S\setminus\bigcup_j Q_j)\ge \sigma(\{x\in S\ :\ Mf(x)\le \varepsilon\lambda \})\\
	\overset{\eqref{eq:large set with maximal bdd}}{\ge}  \frac{8}{10}\sigma(S)\sim \ell(S)^d\sim \ell(R)^d.
	\end{multline}
	In particular, $Q_j\neq R$ for all $j$. Thus, by the maximality of $Q_j$ we get easily
	\begin{equation}\label{eq:f avg on Qj}
	{|f|}_{B_{Q_j}}\sim \varepsilon\lambda.
	\end{equation}
	We define $g\in L^{\infty}(\sigma)$ by 
	\begin{equation*}
	g(x) = f(x)\one_{R\setminus\bigcup_j Q_j}(x)+\sum_j\avg{f}_{Q_j}\one_{Q_j}(x).
	\end{equation*} 
	From the definition of $Q_j$ and \eqref{eq:f avg on Qj} it follows that  $\norm{g}_{\infty}\lesssim \varepsilon\lambda$.
	We define also $b\in L^1(\sigma)$ as
	\begin{equation*}
	b(x)= \sum_j (f(x)-\avg{f}_{Q_j})\one_{Q_j}(x)=:\sum_j b_j(x).
	\end{equation*}
	Note that $f=g+b$ and for all $j$ we have $\int b_j\ d\sigma = 0$.
	
	\subsection{Definition of \texorpdfstring{$S_1$}{S\_1}}
	We set $S_1=S\setminus N_{\eta}$, where $N_{\eta}$ is some small neighbourhood of $\bigcup_j Q_j.$ To make this more precise, given a small $\eta>0$ we define $N_{\eta} = \bigcup_j N_{\eta,j}$, where
	\begin{equation*}
	N_{\eta,j} = \{x\in\supp\sigma\ :\ \dist(x,Q_j)<\eta\ell(Q_j) \}.
	\end{equation*}
	The thin boundaries property of $\D$ \eqref{eq:thin boundaries} gives
	\begin{equation*}
	\sigma(	N_{\eta,j}\setminus Q_j)\le \eta^{\gamma}\sigma(Q_j)
	\end{equation*}
	for some $\gamma\in (0,1)$. From \eqref{eq:size of Q_j 1} and the fact that $\sigma(S)\sim\sigma(R)$ we get
	\begin{multline*}
	\sigma(S\setminus N_{\eta}) \ge \sigma(S\setminus\bigcup_j Q_j) - \sum_j \sigma(N_{\eta,j}\setminus Q_j)\overset{\eqref{eq:size of Q_j 1}}{\ge} \frac{8}{10}\sigma(S) - \sum_j \eta^{\gamma}\sigma(Q_j)\\
	\ge \frac{8}{10}\sigma(S) - \eta^{\gamma} \sigma(R)\ge \frac{8}{10}\sigma(S) - C \eta^{\gamma} \sigma(S) = \left(\frac{8}{10} - C \eta^{\gamma}\right)\sigma(S).
	\end{multline*}
	Here $C$ depends only on the implicit constant in $\sigma(S)\sim\sigma(R)$, which in turn depends on the Ahlfors regularity constant of $\sigma$ and on the parameters from the definition of the system $\D$. 
	
	Choosing $\eta$ so small that $C\eta^{\gamma}<0.1$, we get that $S_1=S\setminus N_{\eta}$ satisfies 
	\begin{equation*}
	\sigma(S_1)\ge \frac{7}{10}\sigma(S).
	\end{equation*}

	\subsection{Estimating \texorpdfstring{$J_1f$}{J\_1 f}}
	Now, we will show that
	\begin{equation}\label{eq:J1f estimate on S1}
	\int_{S_1} J_1 f(x)^2\ d\sigma(x)\lesssim \varepsilon^2\lambda^2\sigma(S_1)
	\end{equation}
	Recall that 
	\begin{equation*}
	J_1 f(x)^2 = \sum_{x\in Q\in\D(S)} \alpha_{f\sigma}(Q)^2 + \sum_{x\in Q\in\D(S)} |f|_{B_Q}^2\alpha_{\sigma}(Q)^2 =: J_{1}'f(x)^2 + J_{1}''f(x)^2.
	\end{equation*}
	
	First we deal with $J_{1}''f$. Observe that for all $Q\in\D(S)$ intersecting $S_1$ we have
	\begin{equation}\label{eq:average of f over BQ}
	{|f|}_{B_Q}\lesssim \varepsilon\lambda.
	\end{equation}
	Indeed, let $y\in Q\cap S_1$, and let $P\in\D(R)$ be such that $y\in P$, $\ell(Q)\sim\ell(P)$, and $B_Q\subset B_{P}$. By the maximality of $Q_j$ and the fact that $P\setminus \bigcup_j Q_j\neq\varnothing$ we get $|f|_{B_{P}}\le 2\varepsilon\lambda$. Estimate \eqref{eq:average of f over BQ} follows from the inclusion $B_Q\subset B_P$.
	
	Using \eqref{eq:average of f over BQ} as well as \thmref{t:xavi} we get
	\begin{multline*}
	\int_{S_1} \sum_{x\in Q\in\D(S)}{|f|}_{B_Q}^2\alpha_{\sigma}(Q)^2\ d\sigma(x) 
	\lesssim \varepsilon^2\lambda^2\sum_{Q\in\D(S)}\alpha_{\sigma}(Q)^2\sigma(Q\cap S_1)
	\\\lesssim \varepsilon^2\lambda^2\sum_{Q\in\D(S)}\alpha_{\sigma}(Q)^2\sigma(Q)\lesssim \varepsilon^2\lambda^2\sigma(S)\sim \varepsilon^2\lambda^2\sigma(S_1).
	\end{multline*}
	Thus, we are only left with showing
	\begin{equation}
	\label{eq:J_1'}
	\int_{S_1} J_{1}'{f}(x)^2\ d\sigma(x)=\int_{S_1} \sum_{x\in Q\in\D(S)} \alpha_{f\sigma}(Q)^2\ d\sigma(x)\lesssim \varepsilon^2\lambda^2\sigma(S_1).
	\end{equation}
	
	\begin{lemma}\label{lem:estimate of alpha fsigma}
		For $Q\in\D(S)$ we have	
		\begin{equation*}
		\alpha_{f\sigma}(Q)\lesssim \alpha_{g\sigma}(Q) + \varepsilon\lambda\sum_{j: Q_j\cap B_Q\neq\varnothing} \frac{\ell(Q_j)^{d+1}}{\ell(Q)^{d+1}}.
		\end{equation*}
	\end{lemma}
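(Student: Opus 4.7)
The plan is to exploit the decomposition $f = g + b = g + \sum_j b_j$ provided by the Calder\'{o}n-Zygmund splitting and to use the fact that the flux $F_{B_Q}$ satisfies a triangle inequality with respect to its first argument. Picking $\cL_Q^{g\sigma}$, the minimizing object for $g\sigma$, as a candidate plane-and-constant for $f\sigma$, we obtain
\[
\alpha_{f\sigma}(Q) \;\le\; \frac{1}{\ell(Q)^{d+1}}F_{B_Q}(f\sigma,\cL_Q^{g\sigma}) \;\le\; \frac{1}{\ell(Q)^{d+1}}F_{B_Q}(g\sigma,\cL_Q^{g\sigma}) + \frac{1}{\ell(Q)^{d+1}}F_{B_Q}(f\sigma,g\sigma),
\]
and the first summand on the right is exactly $\alpha_{g\sigma}(Q)$. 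So the whole proof reduces to bounding $F_{B_Q}(f\sigma,g\sigma)$ by $\varepsilon\lambda \sum_{j:Q_j\cap B_Q\neq\varnothing}\ell(Q_j)^{d+1}$.

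For this, fix $\varphi\in\lip_1(B_Q)$. Since $f-g=b=\sum_j b_j$ and each $b_j$ is supported in $Q_j$, only those indices $j$ with $Q_j\cap B_Q\neq\varnothing$ contribute to $\int\varphi\,d(f\sigma-g\sigma)$. For each such $j$, the key point is the zero-mean property $\int b_j\,d\sigma=0$, which gives
\[
\Bigl|\int\varphi\, b_j\,d\sigma\Bigr| = \Bigl|\int(\varphi-\varphi(z_{Q_j}))b_j\,d\sigma\Bigr| \;\le\; \diam(Q_j)\,\|b_j\|_{L^1(\sigma)} \;\lesssim\; \ell(Q_j)\,\|b_j\|_{L^1(\sigma)}.
\]
Extending $\varphi$ to all of $\R^n$ (as a $1$-Lipschitz function vanishing outside $B_Q$) makes this legal even when $Q_j$ is not contained in $B_Q$, since $\varphi$ remains $1$-Lipschitz and $z_{Q_j}\in Q_j$ is still within distance $\lesssim\ell(Q_j)$ of every point of $Q_j$. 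Combined with the standard CZ bound $\|b_j\|_{L^1(\sigma)} \le 2\int_{Q_j}|f|\,d\sigma \lesssim |f|_{B_{Q_j}}\ell(Q_j)^d \sim \varepsilon\lambda\,\ell(Q_j)^d$ (which follows from \eqref{eq:f avg on Qj}), summing over the relevant $j$ and taking the supremum over $\varphi$ yields
\[
F_{B_Q}(f\sigma,g\sigma) \;\lesssim\; \varepsilon\lambda\sum_{j:\,Q_j\cap B_Q\neq\varnothing}\ell(Q_j)^{d+1},
\]
and dividing by $\ell(Q)^{d+1}$ gives the claim.

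No step is really an obstacle: the only subtlety is the geometric configuration in which some $Q_j$ spills out of $B_Q$ (which can happen because $\{Q_j\}\subset\D(R)$ lives in a different adjacent grid than $Q\in\D(S)$), but the zero-mean trick works uniformly as long as $\varphi$ is extended by zero and one remembers that Lipschitzness is preserved. Everything else — the triangle inequality in $F_{B_Q}$, the choice of $\cL_Q^{g\sigma}$, and the $L^1$ estimate on $b_j$ — is routine.
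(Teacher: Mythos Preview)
Your proof is correct and follows essentially the same approach as the paper: choose $\cL_Q^{g\sigma}$ as the candidate for $f\sigma$, split off the $g$-part to produce $\alpha_{g\sigma}(Q)$, and then use the zero-mean of each $b_j$ together with the Lipschitz bound on $\varphi$ and the size estimate $\|b_j\|_{L^1(\sigma)}\lesssim \varepsilon\lambda\,\ell(Q_j)^d$ coming from \eqref{eq:f avg on Qj}. The only cosmetic difference is that the paper tests directly against $\varphi\in\lip_1(B_Q)$ from the outset rather than first invoking the triangle inequality for $F_{B_Q}$, and your remark about extending $\varphi$ by zero is already implicit in the definition of $\lip_1(B_Q)$.
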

	\begin{proof}
		Let $\varphi\in\lip_1(B_Q)$. Then, using the decomposition $f(y)=g(y)+b(y)$ valid for all $y\in R\supset B_S\cap\supp\sigma\supset B_Q\cap\supp\sigma$, 
	\begin{multline*}
	\abs{\int\varphi(y)f(y)\ d\sigma(y) - \int\varphi(y)\ d\cL_Q^{g\sigma}(y)} \\
	\le \abs{\int\varphi(y)g(y)\ d\sigma(y) - \int\varphi(y)\ d\cL_Q^{g\sigma}(y)} + \abs{\int \varphi(y) b(y)\ d\sigma(y)}\\
	\lesssim \ell(Q)^{d+1}\alpha_{g\sigma}(Q) + \sum_j\abs{\int \varphi(y) b_j(y)\ d\sigma(y)}.
	\end{multline*}
	Concerning the second term on the right hand side, recall that $\int b_j\ d\sigma= 0$ and that $\supp b_j\subset Q_j$. Keeping that in mind, denoting by $x_j$ the center of $Q_j$, we estimate in the following way:
	\begin{multline*}
	\sum_j\abs{\int \varphi(y) b_j(y)\ d\sigma(y)}= \sum_j\abs{\int (\varphi(y)-\varphi(x_j)) b_j(y)\ d\sigma(y)}\\ \le \sum_j\int \abs{(\varphi(y)-\varphi(x_j)) b_j(y)}\ d\sigma(y)
	\lesssim \sum_{j: Q_j\cap B_Q\neq\varnothing} \ell(Q_j)\int \abs{b_j(y)}\ d\sigma(y)\\
	= \sum_{j: Q_j\cap B_Q\neq\varnothing} \ell(Q_j)\int_{Q_j} \abs{f(y)-\avg{f}_{Q_j}}\ d\sigma(y)\lesssim \sum_{j: Q_j\cap B_Q\neq\varnothing} \ell(Q_j)^{d+1}\avg{|f|}_{Q_j}\\
	\overset{\eqref{eq:f avg on Qj}}{\lec} \varepsilon\lambda\sum_{j: Q_j\cap B_Q\neq\varnothing} \ell(Q_j)^{d+1}.
	\end{multline*}
	Together with the previous string of estimates, taking supremum over all $\varphi\in\lip_1(B_Q)$, we get 
	\begin{equation*}
	\alpha_{f\sigma}(Q)\lesssim \alpha_{g\sigma}(Q) + \varepsilon\lambda\sum_{j: Q_j\cap B_Q\neq\varnothing} \frac{\ell(Q_j)^{d+1}}{\ell(Q)^{d+1}}.
	\end{equation*}	
	\end{proof}
	An immediate consequence of \lemref{lem:estimate of alpha fsigma} is the estimate
	\begin{multline}\label{eq:J_1f on S'}
	\int_{S_1} J_1' f(x)^2\ d\sigma(x)\\
	\lesssim \int_{S_1} J_1' g(x)^2\ d\sigma(x) + \varepsilon^2\lambda^2 \int_{S_1}\sum_{x\in Q\in\D(S)}\left( \sum_{j: Q_j\cap B_Q\neq\varnothing} \frac{\ell(Q_j)^{d+1}}{\ell(Q)^{d+1}}\right)^2\ d\sigma(x).
	\end{multline}

	Using \propref{prop:alphas bounded by f} and the fact that $\norm{g}_{\infty}\lesssim\varepsilon\lambda,\ \supp g\subset R,$ we get
	\begin{equation}\label{eq:J_1 g estimated}
	\int_{S_1} {J_1'g(x)^2}\ d\sigma(x)\le\norm{Jg}_2^2\lesssim \norm{g}_2^2\le\norm{g}_{\infty}^{2}\sigma(R)\lesssim \varepsilon^2\lambda^2\sigma(R)\sim \varepsilon^2\lambda^2\sigma(S_1).
	\end{equation}
	
	Moving on to the second term from the right hand side of \eqref{eq:J_1f on S'}, denote by $\Tree\subset\D(S)$ the family of cubes contained in $S$ that intersect $S_1$. We have
	\begin{multline}\label{eq:the ugly sum}
	\int_{S_1} \sum_{x\in Q\in\D(S)}\left( \sum_{j: Q_j\cap B_Q\neq\varnothing} \frac{\ell(Q_j)^{d+1}}{\ell(Q)^{d+1}}\right)^2\ d\sigma(x) \le \sum_{Q\in\Tree}\sigma(Q)\left( \sum_{j: Q_j\cap B_Q\neq\varnothing} \frac{\ell(Q_j)^{d+1}}{\ell(Q)^{d+1}}\right)^2\\
	\overset{\text{Cauchy-Schwarz}}{\lesssim} \sum_{Q\in\Tree}\ell(Q)^{-d-2}\left( \sum_{j: Q_j\cap B_Q\neq\varnothing} {\ell(Q_j)^{d+2}}\right)\left( \sum_{j: Q_j\cap B_Q\neq\varnothing} {\ell(Q_j)^{d}}\right)
	\end{multline}
	Note that since $Q\in\Tree$, we have $Q\cap S_1\neq\varnothing$. By the definition of $S_1$, this implies that for all $j$ such that $Q_j\cap B_Q\neq\varnothing$ we have $\ell(Q)\gtrsim_{\eta}\ell(Q_j)$. Indeed, if $\ell(Q)\ll \eta\ell(Q_j)$, then $B_Q\cap Q_j\neq\varnothing$ implies $Q\subset N_{\eta,j}$, which would contradict $Q\cap S_1\neq\varnothing$.
	
	By the observation above, we have some $C=C(\eta)$ such that if $B_Q\cap Q_j\neq\varnothing$, then $Q_j\subset CB_Q$. Consequently, 
	\begin{equation*}
	\sum_{j: Q_j\cap B_Q\neq\varnothing} {\ell(Q_j)^{d}}\lesssim \sum_{j: Q_j\subset CB_Q} \sigma(Q_j) \le \sigma(CB_Q)\sim_{\eta} \ell(Q)^d.
	\end{equation*}
	Thus, the right hand side of \eqref{eq:the ugly sum} can be estimated by
	\begin{equation}\label{eq:sum over tree}
	\sum_{Q\in\Tree}\ell(Q)^{-2} \sum_{j: Q_j\cap B_Q\neq\varnothing} {\ell(Q_j)^{d+2}} = \sum_{j} {\ell(Q_j)^{d+2}} \sum_{Q\in\Tree : Q_j\cap B_Q\neq\varnothing}\ell(Q)^{-2}.
	\end{equation}
	As noted above, $ Q_j\cap B_Q\neq\varnothing$ implies $\ell(Q)\gtrsim_{\eta}\ell(Q_j)$. Hence, 
	\begin{equation*}
	\sum_{Q\in\Tree : Q_j\cap B_Q\neq\varnothing}\ell(Q)^{-2}\lesssim_{\eta}\ell(Q_j)^{-2},
	\end{equation*}
	where we used the fact that the sum above is essentially a geometric series. Putting this together with \eqref{eq:sum over tree} and \eqref{eq:the ugly sum}, we get
	\begin{equation*}
	\int_{S_1} \sum_{x\in Q\in\D(S)}\left( \sum_{j: Q_j\cap B_Q\neq\varnothing} \frac{\ell(Q_j)^{d+1}}{\ell(Q)^{d+1}}\right)^2\ d\sigma(x)\lesssim_{\eta}\sum_j\ell(Q_j)^d\lesssim\ell(R)^d\sim\sigma(S_1).
	\end{equation*}
	

	Together with \eqref{eq:J_1f on S'} and \eqref{eq:J_1 g estimated} this gives the desired estimate \eqref{eq:J_1'}:
	\begin{equation*}
	\int_{S_1} J_1' f(x)^2\ d\sigma(x)\lesssim_{\eta} \varepsilon^2\lambda^2\sigma(S_1).
	\end{equation*}
	
	This finishes the proof of \lemref{lem:set S'}.
	
\section{\texorpdfstring{$\nrm{f}_{p}\lec \nrm{Jf}_{p}$ for $1<p<\infty$}{||f||\_p < C||Jf||\_p for 1<p<infty}}\label{sec:fp < Jfp}

In this section we show the second inequality of \thmref{thm:dyadic main theorem}.

\begin{prop}
	Let $f\in L^p(\sigma)$ for some $1<p<\infty$. Then
	\begin{equation}\label{eq:f smaller than Jf}
	\norm{f}_{L^p(\sigma)} \lesssim \norm{Jf}_{L^p(\sigma)}.
	\end{equation}
\end{prop}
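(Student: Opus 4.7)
The goal is $\nrm{f}_{L^p(\sigma)}\lec \nrm{Jf}_{L^p(\sigma)}$, and the approach is to invoke the David--Journ\'{e}--Semmes Littlewood--Paley theory for UR sets \cite{DJS85}. That machinery produces, for each $Q\in\D$, an operator $D_Q$ with kernel $\psi_Q(x,y)$ satisfying (i) $\supp \psi_Q(x,\cdot)\subset CB_Q$; (ii) $|\psi_Q(x,y)|\lec \ell(Q)^{-d}$ and $\lip_y\psi_Q(x,\cdot)\lec \ell(Q)^{-d-1}$; (iii) $\int \psi_Q(x,y)\,d\sigma(y)=0$; and, most crucially, the $L^p$ reproducing equivalence
\begin{equation*}
\nrm{f}_{L^p(\sigma)}\sim \nrm{\ps{\sum_{Q\in\D}|D_Qf|^2}^{1/2}}_{L^p(\sigma)}.
\end{equation*}
Since $\psi_Q(x,\cdot)$ is supported at scale $\ell(Q)$, at each $x\in\Sigma$ only $O(1)$ cubes per generation contribute nontrivially to the square function.

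The plan is then to reduce the problem to the pointwise bound
\begin{equation*}
|D_Qf(x)|\lec \alpha_{f\sigma}(Q) + |f|_{B_Q}\,\alpha_\sigma(Q),\qquad x\in CB_Q.
\end{equation*}
Granted this, the $\alpha$-number comparability \eqref{eq:alphas on two balls} lets us transfer from $CB_Q$ to $B_{Q'}$ for a nearby cube $Q'\ni x$ of comparable size, so
\begin{equation*}
\sum_{Q\in\D}|D_Qf(x)|^2\lec \sum_{Q'\ni x}\ps{\alpha_{f\sigma}(Q')^2+|f|_{B_{Q'}}^2\alpha_\sigma(Q')^2}=Jf(x)^2,
\end{equation*}
and combining with the reproducing equivalence yields the desired inequality.

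To prove the pointwise bound, I would exploit $\int\psi_Q(x,\cdot)\,d\sigma=0$ to write
\begin{equation*}
D_Qf(x)=\int \psi_Q(x,y)\,d(f\sigma)(y) - |f|_{B_Q}\int\psi_Q(x,y)\,d\sigma(y).
\end{equation*}
Since $\ell(Q)^{d+1}\psi_Q(x,\cdot)$ lies (up to an absolute constant) in $\lip_1(CB_Q)$, replacing $f\sigma$ by its planar minimizer $\cL_Q^{f\sigma}=c_Q^{f\sigma}\Hd\mres L_Q^{f\sigma}$ introduces an error $\lec \alpha_{f\sigma}(Q)$, and replacing $\sigma$ by $\cL_Q^\sigma$ introduces an error $\lec |f|_{B_Q}\alpha_\sigma(Q)$. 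What remains is the purely planar expression
\begin{equation*}
c_Q^{f\sigma}\int\psi_Q(x,\cdot)\,d\Hd\mres L_Q^{f\sigma}-|f|_{B_Q}\,c_Q^{\sigma}\int\psi_Q(x,\cdot)\,d\Hd\mres L_Q^{\sigma},
\end{equation*}
which must itself be shown to be $\lec \alpha_{f\sigma}(Q)+|f|_{B_Q}\alpha_\sigma(Q)$.

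The main obstacle is handling this residual planar term, because the best-fit planes $L_Q^{f\sigma}$ and $L_Q^\sigma$ are generally distinct. I expect to resolve it by choosing DJS kernels with an enhanced cancellation property, namely $\int\psi_Q(x,\cdot)\,d\Hd\mres L=0$ for every $d$-plane $L$ sufficiently close to $\Sigma$ on $CB_Q$, which would annihilate the planar expression outright. Failing such a construction, the same conclusion should follow from a triangle-inequality argument in the $F_{B_Q}$-metric, showing that $\cL_Q^{f\sigma}$ and $|f|_{B_Q}\cL_Q^\sigma$ are close with discrepancy $\lec \alpha_{f\sigma}(Q)+|f|_{B_Q}\alpha_\sigma(Q)$, combined with $\int\psi_Q\,d\sigma=0$ to extract the $\alpha_\sigma(Q)$ factor from the $\cL_Q^\sigma$-integral. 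Negotiating this mismatch between the two best-fit planes is the technical heart of the section.
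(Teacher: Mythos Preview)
Your overall strategy matches the paper's exactly: invoke the DJS Littlewood--Paley square function, reduce to the pointwise estimate $|\tilde D_k f(x)|\lec \alpha_{f\sigma}(Q)+|f|_{B_Q}\alpha_\sigma(Q)$, and obtain the latter by subtracting a multiple of $\int\tilde d_k\,d\sigma$ (using $\tilde D_k 1=0$) and passing to the planar minimizers. You have also correctly identified the crux: the residual comparison of the two best-fit planes $L_Q^{f\sigma}$ and $L_Q^\sigma$.

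The gap is in how you propose to close that residual. The ``enhanced cancellation'' route (kernels annihilating \emph{all} nearby flat measures) is not a property of the DJS construction, and engineering it while keeping the reproducing formula is a separate and unclear task; the paper does not attempt this. Your fallback, a triangle-inequality argument in $F_{B_Q}$, does not work in the naive form: inserting $f\sigma$ and $|f|_{B_Q}\sigma$ as intermediaries produces the term $F_{B_Q}(f\sigma,|f|_{B_Q}\sigma)$, which measures the oscillation of $f$ on $B_Q$ and is not controlled by any $\alpha$-number; alternatively, using $\int\psi_Q\,d\sigma=0$ to bound $|\int\psi_Q\,d\cL_Q^\sigma|\lec\alpha_\sigma(Q)$ still leaves $|\int\psi_Q\,d\cL_Q^{f\sigma}|$, and comparing this to $\int\psi_Q f\,d\sigma$ is circular. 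What the paper actually proves is the genuinely nontrivial plane-comparison estimate
\[
|c_Q^{f\sigma}|\,\frac{1}{\ell(Q)^{d+1}}\,F_{B_Q}\bigl(\cP_Q^{\sigma},\cP_Q^{f\sigma}\bigr)\ \lec\ \alpha_{f\sigma}(Q)+|c_Q^{f\sigma}|\,\alpha_\sigma(Q),
\]
together with the short lemma $|c_Q^{f\sigma}|\lec |f|_{B_Q}$. The proof is geometric: one finds a sub-ball $\sB_0\subset L_Q^{f\sigma}$ maximally far from $L_Q^\sigma$, builds a thin ``pancake'' neighborhood $U$ of $\sB_0$ of thickness $\sim Dr$ (where $Dr=\dist_H(L_Q^\sigma\cap B_Q,L_Q^{f\sigma}\cap B_Q)$) disjoint from $L_Q^\sigma$, and uses Ahlfors regularity plus a covering argument to show $\sigma(U)\lec \alpha_\sigma(Q)\,\ell(Q)^d/D$; testing against suitable Lipschitz bumps supported in $U$ then yields the estimate. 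A small but clarifying device: the paper subtracts $\dfrac{c_Q^{f\sigma}}{c_Q^\sigma}\int\tilde d_k\,d\sigma$ rather than $|f|_{B_Q}\int\tilde d_k\,d\sigma$, so that the middle term becomes exactly $|c_Q^{f\sigma}|\cdot\bigl|\int\tilde d_k\,d\cP_Q^{f\sigma}-\int\tilde d_k\,d\cP_Q^\sigma\bigr|$ and the proposition applies directly.
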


\subsection{Littlewood-Paley theory}
Our main tool will be the Littlewood-Paley theory for spaces of homogeneous type developed by David, Journ\'{e} and Semmes in \cite{DJS85}. We follow the way it was paraphrased (in English) in \cite[Section 15]{tolsa2017rectifiable}.

For $r>0$, $x\in\Sigma$, and $g\in L^1_{loc}(\sigma)$, let 
\[D_r g(x)=\frac{\phi_r*(g\sigma)(x)}{\phi_r*\sigma(x)}
- \frac{\phi_{2r}*(g\sigma)(x)}{\phi_{2r}*\sigma(x)}\]
where $\phi_r(y)=r^{-d}\phi(y/r)$ and $\phi$ is a radially symmetric smooth nonnegative function supported in $B(0,1)$ with $\int_{\R^{n}}\phi=1$.

For a function $g\in L^1_{loc}(\sigma)$ and $r>0$, we denote
$$S_r g(x) = \frac{\phi_r*(g\sigma)(x)}{\phi_r*\sigma(x)},$$
so that 
\[
D_rg = S_rg -S_{2r}g.
\]

Let $W_r$ be the operator of multiplication by $1/S_r^*1$. We consider the operators
\[\tilde{S}_{r} = S_r\,W_r\,S_r^* 
\;\; \mbox{ and } \;\; 
\tilde{D}_{r} = \tilde{S}_{r}-\tilde{S}_{2r}.\] 
Note that $\tilde{S}_{r}$, and thus $\tilde{D}_{r}$, are self-adjoint and
$\tilde{S}_{r}1 \equiv1$, so that 
\begin{equation}
\label{e:D1=0}
\tilde{D}_{r}1 = \tilde{D}_{r}^*1 =0.
\end{equation}

Let $s_r(x,y)$ the kernel of $S_r$ with respect to $\sigma$, that is, so we can write 
\[
S_r g(x)  =\int s_r(x,y)\,g(y)\,d\sigma(y).\] 
Observe that
\[s_r(x,y) = \frac1{\phi_r*\sigma(x)}\,\phi_r(x-y)\]
and the kernel of $\tilde{S}_r$ is
\[\tilde{s}_{r}(x,y) = \int s_r(x,z)\,\frac1{S_r^*1(z)}\,s_r(y,z)\,d\sigma(z).\]
{
We claim that the kernel $\tilde{d}_{r}(x,\cdot)$ for the operator $\tilde{D}_{r}$ is supported in $B(x,4r)$ and satisfies the Lipschitz bounds
\begin{equation}
\label{e:dklip}
|\tilde{d}_{r}(x,y)-\tilde{d}_{r}(x,z)|\lec |y-z|r^{-d-1}. 
\end{equation}

Indeed, let $x,x'\in \supp \sigma$. Since $\phi_r$ is $Cr^{-d-1}$-Lipschitz and $\sigma$ is Ahlfors regular,
\begin{align*}
|\phi_{r}*\sigma(x)-\phi_{r}*\sigma(x')|
& =\av{\int (\phi_{r}(x-y)-\phi_{r}(x'-y))d\sigma(y)}
\lesssim \frac{|x-x'|}{r^{d+1}}\sigma(B(x,r)\cup B(x',r))\\
& \lesssim \frac{|x-x'|}{r}.
\end{align*}
Thus, for $y\in \supp \sigma$,
\begin{align*}
|s_{r}(x,y)-s_{r}(x',y)|
& \leq \frac{|\phi_{r}(x-y)-\phi_{r}(x'-y)|}{\phi_{r}*\sigma(x)}
+\phi_{r}(x'-y) \left| \frac{1}{\phi_{r}*\sigma(x)}-\frac{1}{\phi_{r}*\sigma(x')}\right|\\
& \lesssim \frac{|x-x'|}{r^{d+1}}+r^{-d}\frac{|\phi_{r}*\sigma(x)-\phi_{r}*\sigma(x')|}{\phi_{r}*\sigma(x)^2}
\sim \frac{|x-x'|}{r^{d+1}}.
\end{align*}
Hence, 
\begin{align*}
|\tilde{s}_{r}(x,y)-\tilde{s}_{r}(x',y)
& = \av{\int (s_r(x,z)-s_{r}(x',z)) \frac1{S_r^*1(z)}\,s_r(y,z)\,d\sigma(z)}\\
& \lesssim \frac{|x-x'|}{r^{d+1}}  \av{\int  \frac1{S_r^*1(z)}\,s_r(y,z)\,d\sigma(z)}
 \lesssim \frac{|x-x'|}{r^{d+1}} 
\end{align*}
where in the last line we used the fact that $\int s_{r}(y,z)d\sigma(z)=1$ and 
\[
S_{r}^{*}1(z) = \int \frac{\phi_{r}(x-z)}{\phi_{r}*\sigma(x)}d\sigma(x)
\geq \int_{B(z,r/2)} \frac{r^{-d}}{\phi_{r}*\sigma(x)}d\sigma(x)
\sim 1.
\]

Since $\tilde{d}_{r} = \tilde{s}_{r}-\tilde{s}_{2r}$ and is symmetric, this proves \eqref{e:dklip}. Moreover, notice that if $x\in \supp\sigma$, $\supp s_{r}(x,\cdot)\subseteq B(x,r)$, and so the integrand of $\tilde{s}_{r}$ is nonzero only when $z\in B(x,r)\cap B(y,r)$, meaning $|x-y|\leq 2r$, and so $\supp\tilde{s}_{r}\subseteq B(x,2r)$, hence $\supp \tilde{d}_{r}\subseteq B(x,4r)$, which proves our claim.

}

\begin{theorem}\cite{DJS85}
	Let $r_{k}=2^{-k}$, and $g\in L^p(\sigma),\ 1<p<\infty$, we have 
	\begin{equation}\label{e:DJS}
	\|g\|_{L^p(\sigma)} \sim \nrm{ \ps{ \sum_{k\in\Z}  |\tilde{D}_{r_{k}} g|^2}^{\frac{1}{2}}}_{L^p(\sigma)}.
	\end{equation}
\end{theorem}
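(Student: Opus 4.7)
The strategy is to combine Cotlar--Stein almost orthogonality at $L^2$, vector-valued Calder\'on--Zygmund theory for the upper bound at general $p$, and a Calder\'on-type reproducing formula together with duality for the lower bound. Throughout I write $Sg(x) := \ps{\sum_{k\in\Z} |\tilde{D}_{r_k}g(x)|^2}^{1/2}$.

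For the $L^2$ upper bound I first prove the almost-orthogonality estimate $\nrm{\tilde{D}_{r_j}\tilde{D}_{r_k}}_{L^2(\sigma)\to L^2(\sigma)} \lesssim 2^{-|j-k|}$. Assuming $j \le k$ (so $r_j \ge r_k$), the composition kernel
\[
K_{j,k}(x,y) = \int \tilde{d}_{r_j}(x,z)\,\tilde{d}_{r_k}(z,y)\,d\sigma(z)
\]
is supported where $|x-y|\lesssim r_j$. Using \eqref{e:D1=0} to subtract $\tilde{d}_{r_j}(x,y)$ from the inner kernel, combined with the Lipschitz bound \eqref{e:dklip} at scale $r_j$ and the support property $\supp\tilde{d}_{r_k}(\cdot,y)\subset B(y,4r_k)$, yields $|K_{j,k}(x,y)|\lesssim (r_k/r_j)\,r_j^{-d}\one_{B(x,Cr_j)}(y)$. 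Schur's test gives the claimed $2^{-(k-j)}$ bound, and then Cotlar--Stein, applied to the family $(\tilde{D}_{r_k})_k$ viewed as a single operator into $\ell^2$, delivers $\norm{Sg}_{L^2(\sigma)}\lesssim\norm{g}_{L^2(\sigma)}$. The $L^p$ upper bound follows by vector-valued Calder\'on--Zygmund theory on the space of homogeneous type $(\Sigma,\sigma)$: the vector-valued kernel $K(x,y):=(\tilde{d}_{r_k}(x,y))_k$ has $\nrm{K(x,y)}_{\ell^2}\lesssim |x-y|^{-d}$, since for fixed $|x-y|$ only boundedly many scales $k$ contribute, and it satisfies H\"ormander-type smoothness valued in $\ell^2$ via \eqref{e:dklip}. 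Standard vector-valued CZ extrapolation from $L^2$ therefore gives $\norm{Sg}_p\lesssim\norm{g}_p$ for all $1<p<\infty$.

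For the reverse inequality I establish a reproducing formula $g = \sum_{k\in\Z} \tilde{E}_{r_k}\tilde{D}_{r_k}g$ in $L^p(\sigma)$, where each $\tilde{E}_{r_k}$ is an operator of the same Calder\'on--Zygmund type as $\tilde{D}_{r_k}$ (same cancellation, support, size and smoothness bounds at scale $r_k$). A natural construction sets $\tilde{E}_{r_k}:=\tilde{D}_{r_k}T^{-1}$ with $T:=\sum_{k}\tilde{D}_{r_k}^2$; the operator $T$ is $L^2$-bounded by the almost-orthogonality above, one shows it is invertible on $L^2(\sigma)$ by a further Cotlar--Stein argument applied to $I-T$, and the inverse preserves the relevant kernel bounds. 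Given such a formula, duality and Cauchy--Schwarz give
\[
|\langle g,h\rangle| = \Bigl|\sum_k\langle \tilde{D}_{r_k}g,\tilde{E}_{r_k}^*h\rangle\Bigr| \le \int Sg(x)\,\tilde{S}h(x)\,d\sigma(x) \le \norm{Sg}_p\,\norm{\tilde{S}h}_{p'},
\]
where $\tilde{S}h:=\ps{\sum_k |\tilde{E}_{r_k}^*h|^2}^{1/2}$. Applying the upper bound already established, now to the family $(\tilde{E}_{r_k}^*)_k$ at exponent $p'$, yields $\norm{\tilde{S}h}_{p'}\lesssim\norm{h}_{p'}$, and taking the supremum over $\norm{h}_{p'}=1$ gives $\norm{g}_p\lesssim\norm{Sg}_p$.

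The hard step is the reproducing formula: proving invertibility of $T$ on an Ahlfors regular set $\Sigma$ where no Fourier analysis is available. On $\R^n$ this is immediate from Plancherel; on $\Sigma$ one needs a T(1)-type argument showing that the almost-orthogonal $L^2$ norm of $I-T$ is small enough for a Neumann series to converge, together with a careful bookkeeping to verify that $T^{-1}$ inherits enough of the Calder\'on--Zygmund structure to pass the kernel estimates from $\tilde{D}_{r_k}$ to $\tilde{E}_{r_k}$. This finer cancellation analysis is the heart of \cite{DJS85} and is the step I expect to take the most work.
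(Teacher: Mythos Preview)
The paper does not give a proof of this theorem: it is quoted from \cite{DJS85} for $p=2$, with the one-line remark that the case of general $p$ follows by standard arguments (the paper points to \cite[Corollary 6.1]{tolsa2001littlewood}). Your sketch is therefore considerably more detailed than anything in the paper, and it outlines essentially the same standard route that those references take: almost orthogonality and Cotlar--Stein for the $L^2$ upper bound, vector-valued Calder\'on--Zygmund theory on the homogeneous-type space $(\Sigma,\sigma)$ to push the upper bound to all $1<p<\infty$, and a Calder\'on reproducing formula combined with duality for the lower bound.

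One caveat on your description of the hard step. Writing $T=\sum_k\tilde{D}_{r_k}^2$ and using the telescoping identity $\sum_k\tilde{D}_{r_k}=I$ (valid here since $\tilde{S}_r\to I$ as $r\to 0$ and $\tilde{S}_r\to 0$ as $r\to\infty$ on $L^2(\sigma)$, $\sigma(\Sigma)=\infty$), one has $I-T=\sum_{j\neq k}\tilde{D}_{r_j}\tilde{D}_{r_k}$. Cotlar--Stein together with your almost-orthogonality bound $\|\tilde{D}_{r_j}\tilde{D}_{r_k}\|\lesssim 2^{-|j-k|}$ only yields $\|I-T\|\lesssim\sum_{m\neq 0}2^{-|m|}$, a constant with no reason to be $<1$, so a Neumann series does not follow directly. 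The device actually used in \cite{DJS85} (Coifman's trick) is to refine the dyadic scale, replacing $r_k=2^{-k}$ by $r_k=a^{-k}$ with $a>1$ close to $1$; then $\tilde{D}_r=\tilde{S}_r-\tilde{S}_{ar}$ carries an extra factor of $a-1$ in the almost-orthogonality estimate, which makes the off-diagonal sum $<1$ for $a$ close enough to $1$, and one transfers back to $a=2$ afterwards. You correctly flag this invertibility as the crux of \cite{DJS85}, but ``Cotlar--Stein applied to $I-T$'' is not by itself the mechanism.
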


The original result is stated for $p=2$, but this case implies the other cases (see for example the proof of \cite[Corollary 6.1]{tolsa2001littlewood}).

Let $\tilde{D}_{k}:= \tilde{D}_{r_k},\ \tilde{d}_{k}:= \tilde{d}_{r_k}$. By \eqref{e:DJS}, it is clear that to prove \eqref{eq:f smaller than Jf}, it suffices to show that
\begin{equation*}
\nrm{ \ps{ \sum_{k\in\Z}  |\tilde{D}_{{k}} f|^2}^{\frac{1}{2}}}_{L^p(\sigma)} \lesssim \nrm{Jf}_{L^p(\sigma)}.
\end{equation*}	
In fact, we will show a stronger, pointwise inequality which immediately implies the one above.

\begin{lemma}
	Let $x\in \Sigma$, $k\in\mathbb{Z}$, and let $Q\in\D$ be the smallest cube containing $x$ and such that $\supp \tilde{d}_k(x,\cdot)\subset 0.5B_Q$. Then, $\ell(Q)\sim r_k$ and
	\begin{equation}\label{eq:Dkf estimate}
	|\tilde{D}_{k}f(x)| \lesssim \alpha_{f\sigma}(Q) + |f|_{B_Q}\alpha_{\sigma}(Q).                                       \end{equation}
\end{lemma}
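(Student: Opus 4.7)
The plan is to bound $\tilde{D}_k f(x)=\int\tilde{d}_k(x,y)f(y)\,d\sigma(y)$ by comparing both $f\sigma$ and $\sigma$ to their optimal flat approximations $\mathcal{L}_Q^{f\sigma}$ and $\mathcal{L}_Q^\sigma$, leveraging the two central features of $\tilde{d}_k(x,\cdot)$ already established in the paper: it is supported in $0.5B_Q$ and is $\lesssim\ell(Q)^{-d-1}$-Lipschitz (from the support discussion together with \eqref{e:dklip}), and it has zero $\sigma$-mean $\int\tilde{d}_k\,d\sigma=0$ by \eqref{e:D1=0}. For the scale comparison: the inclusion $\supp\tilde{d}_k(x,\cdot)\subset B(x,4r_k)\subset B(z_Q,2\ell(Q))$ combined with $x\in Q$ forces $\ell(Q)\gtrsim r_k$, while the minimality of $Q$ together with the David--Christ property $B(z_Q,\ell(Q)/5)\cap\Sigma\subset Q$ provides the matching upper bound, giving $\ell(Q)\sim r_k$.

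For the main estimate I would use $\int\tilde{d}_k\,d\sigma=0$ to write $\tilde{D}_k f(x)=\int\tilde{d}_k\,d(f\sigma-f_{B_Q}\sigma)$ and insert both flat approximations, producing the three-term split
\[
\tilde{D}_k f(x)=\int\tilde{d}_k\,d(f\sigma-\mathcal{L}_Q^{f\sigma})+\int\tilde{d}_k\,d(\mathcal{L}_Q^{f\sigma}-f_{B_Q}\mathcal{L}_Q^\sigma)+f_{B_Q}\!\int\tilde{d}_k\,d(\mathcal{L}_Q^\sigma-\sigma).
\]
Since $\tilde{d}_k(x,\cdot)$ is $\lesssim\ell(Q)^{-d-1}$-Lipschitz and supported in $B_Q$, a suitable scalar multiple of it lies in $\mathrm{Lip}_1(B_Q)$, so the definition of $F_{B_Q}$ together with $F_{B_Q}(f\sigma,\mathcal{L}_Q^{f\sigma})=\ell(Q)^{d+1}\alpha_{f\sigma}(Q)$ and the analogous identity for $\sigma$ immediately bounds the first and third pieces by $\alpha_{f\sigma}(Q)$ and $|f|_{B_Q}\alpha_\sigma(Q)$ respectively.

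The middle integral $\int\tilde{d}_k\,d(\mathcal{L}_Q^{f\sigma}-f_{B_Q}\mathcal{L}_Q^\sigma)$ is the expected obstacle, since the two plane measures may live on different planes $L_Q^{f\sigma}\neq L_Q^\sigma$. I plan to split it as $\int\tilde{d}_k\,d\mathcal{L}_Q^{f\sigma}-f_{B_Q}\!\int\tilde{d}_k\,d\mathcal{L}_Q^\sigma$; the second term rewrites via zero mean as $f_{B_Q}\!\int\tilde{d}_k\,d(\mathcal{L}_Q^\sigma-\sigma)\lesssim|f|_{B_Q}\alpha_\sigma(Q)$ exactly as before. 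For the first term I would once more invoke $\int\tilde{d}_k\,d\sigma=0$ to rewrite $\int\tilde{d}_k\,d\mathcal{L}_Q^{f\sigma}=\int\tilde{d}_k\,d\big(\mathcal{L}_Q^{f\sigma}-(c_Q^{f\sigma}/c_Q^\sigma)\sigma\big)$, apply the Lipschitz bound, and then estimate the resulting $F_{B_Q}$-distance by triangle inequality through the intermediate plane measure $(c_Q^{f\sigma}/c_Q^\sigma)\mathcal{L}_Q^\sigma$. The unavoidable plane-discrepancy term, of the shape $|c_Q^{f\sigma}|\,F_{B_Q}(\mathcal{H}^d\mres L_Q^{f\sigma},\mathcal{H}^d\mres L_Q^\sigma)/\ell(Q)^{d+1}$, is the technical crux: it will be absorbed into $\alpha_{f\sigma}(Q)+|f|_{B_Q}\alpha_\sigma(Q)$ using the a priori bound $|c_Q^{f\sigma}|\lesssim|f|_{B_Q}+\alpha_{f\sigma}(Q)$ (obtained by testing $\alpha_{f\sigma}(Q)$ against a 1-Lipschitz bump supported in $B_Q$) together with the fact that both $L_Q^{f\sigma}$ and $L_Q^\sigma$ must lie $\alpha_\sigma(Q)$-close to $\supp\sigma\cap B_Q$ by optimality of each flat approximation.
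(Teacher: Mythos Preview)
Your decomposition and the handling of the first and third pieces are essentially the paper's approach: use $\tilde D_k 1=0$ and the Lipschitz bound \eqref{e:dklip} to reduce everything to the plane--discrepancy quantity
\[
|c_Q^{f\sigma}|\,\frac{F_{B_Q}\!\big(\HH^d\!\mres L_Q^{f\sigma},\,\HH^d\!\mres L_Q^{\sigma}\big)}{\ell(Q)^{d+1}}.
\]
The gap is in your last sentence, where you dispose of this term. You assert that ``both $L_Q^{f\sigma}$ and $L_Q^{\sigma}$ must lie $\alpha_\sigma(Q)$-close to $\supp\sigma\cap B_Q$ by optimality of each flat approximation.'' For $L_Q^{\sigma}$ this is fine. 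For $L_Q^{f\sigma}$ it is not: optimality of $L_Q^{f\sigma}$ is with respect to $\alpha_{f\sigma}(Q)$ and the signed measure $f\sigma$, and carries no information whatsoever about $\alpha_\sigma(Q)$. When $c_Q^{f\sigma}$ is small compared to $|f|_{B_Q}$ (think of an oscillating $f$), the plane $L_Q^{f\sigma}$ can sit transversally to $\Sigma$, or miss $0.5B_Q$ entirely, while $\alpha_\sigma(Q)$ is as small as you like. So the bound $F_{B_Q}(\mathcal P_Q^{f\sigma},\mathcal P_Q^{\sigma})\lesssim \alpha_\sigma(Q)\,\ell(Q)^{d+1}$ that your argument needs is simply false in general.

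What \emph{is} true, and what the paper proves as a separate proposition (Proposition~\ref{prop:angles estimate}), is the coupled estimate
\[
|c_Q^{f\sigma}|\,\frac{F_{B_Q}(\mathcal P_Q^{f\sigma},\mathcal P_Q^{\sigma})}{\ell(Q)^{d+1}}\ \lesssim\ \alpha_{f\sigma}(Q)+|c_Q^{f\sigma}|\,\alpha_\sigma(Q),
\]
where the factor $|c_Q^{f\sigma}|$ on the left is essential precisely to absorb the degenerate situation above. The proof is not a one--liner: after reducing to the case $L_Q^{f\sigma}\cap 0.5B_Q\neq\varnothing$, one finds a $d$-ball $\mathsf B_0\subset L_Q^{f\sigma}$ sitting at Hausdorff distance $\sim D\,\ell(Q)$ from $L_Q^{\sigma}$, tests $\alpha_{f\sigma}$ against a Lipschitz bump on a thin pancake around $\mathsf B_0$, then replaces that bump by one supported on a union of small balls centered on $\Sigma$ and uses Ahlfors regularity of $\sigma$ together with $\alpha_\sigma(Q)$ to count how little $\sigma$-mass (hence how few such balls) can lie in the pancake. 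Your proposal correctly flags this step as ``the technical crux'' but then waves it away; it is in fact where all the work is.
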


The remainder of this section is devoted to the proof of this lemma. 
\subsection{Preliminaries}
Fix $x\in\Sigma,\ k\in\Z,$ and let $Q$ be as above. As noted just above \eqref{e:dklip}, we have $\tilde{d}_k(x,\cdot)\subset B(x,4r_k),$ and so $\ell(Q)\sim r_k$ follows immediately.

We make a few simple reductions.

\begin{remark}
	Without loss of generality we may assume that $\alpha_{\sigma}(Q)\le \varepsilon$ for some small $\varepsilon$. Indeed, if we had $\alpha_{\sigma}(Q)\geq \varepsilon$, then using \eqref{e:dklip} and the fact that $\supp\tilde{d}_{k}(x,\cdot)\subset B_Q$
	\begin{multline*}
	|\tilde{D}_{k}f(x)| = \av{\int \tilde{d}_{k}(x,y) f(y)\ d\sigma(y)} \le \nrm{\tilde{d}_k(x,\cdot)}_{\infty}\int_{B_Q}  |f(y)|\ d\sigma(y)\\
	\lesssim \ell(Q)^{-d} \int_{B_Q}  |f(y)|\ d\sigma(y)\sim |f|_{B_Q}\lesssim_{\varepsilon} |f|_{B_Q}\alpha_{\sigma}(Q),
	\end{multline*}
	and so in this case \eqref{eq:Dkf estimate} holds. From now on we assume $\alpha_{\sigma}(Q)\le \varepsilon$.
\end{remark}

\begin{remark}
	Similarly, without loss of generality we may assume that $L^{f\sigma}_Q\cap 0.5B_Q \neq \varnothing$. If we had $L^{f\sigma}_Q\cap 0.5B_Q = \varnothing$, then $L^{f\sigma}_Q\cap \supp \tilde{d}_r(x,\cdot) = \varnothing$ so that
	\begin{equation*}
	\int \tilde{d}_k(x,y)\ d\cL_Q^{f\sigma}(y) = 0.
	\end{equation*}
	This implies
	\begin{equation*}
	|\tilde{D}_{k}f(x)|=\av{\int \tilde{d}_k(x,y)f(y)\ d\sigma(y)}\lesssim \alpha_{f\sigma}(B),
	\end{equation*}
	and so \eqref{eq:Dkf estimate} is true also in this case.
\end{remark}

Recall that $c^{f\sigma}_Q,\ c^{\sigma}_Q$ are the constants minimizing $\alpha_{f\sigma}(Q),\ \alpha_{\sigma}(Q)$, respectively. Since $\sigma$ is Ahlfors regular and $\alpha_{\sigma}(Q)\le\varepsilon$, we get by \cite[Lemma 3.3]{ATT18}
\begin{equation}\label{eq:csigma nice}
c^{\sigma}_Q\sim 1.
\end{equation}

To show \eqref{eq:Dkf estimate} we begin by using \eqref{e:D1=0} and the triangle inequality:
\begin{multline}\label{eq:Dkf estimate triangle ineq}
	|\tilde{D}_{k}f(x)|
	 = \av{\int_{\Sigma}\tilde{d}_{k}(x,y)f(y)\ d\sigma(y)}
	\stackrel{\eqref{e:D1=0}}{=}\av{\int_{\Sigma}\tilde{d}_{k}(x,y)f(y)\ d\sigma(y) - \frac{c^{f\sigma}_Q}{c^{\sigma}_Q}\int_{\Sigma}\tilde{d}_{k}(x,y)\ d\sigma(y)}\\
	  \leq \av{\int_{\Sigma}\tilde{d}_{k}(x,y)f(y)\ d\sigma(y)  - \int_{L_{Q}^{f\sigma}}\tilde{d}_{k}(x,y)\ d\mathcal{L}_{Q}^{f\sigma}(y) }\\
	  +\av{\int_{L_{Q}^{f\sigma}}\tilde{d}_{k}(x,y)\ d\mathcal{L}_{Q}^{f\sigma}(y) - \frac{c^{f\sigma}_Q}{c^{\sigma}_Q}\int_{L_{Q}^{\sigma}}\tilde{d}_{k}(x,y)\ d\mathcal{L}_{Q}^{\sigma}(y)} \\
	 + \av{\frac{c^{f\sigma}_Q}{c^{\sigma}_Q}}\av{\int_{L_{Q}^{\sigma}}\tilde{d}_{k}(x,y)\ d\mathcal{L}_{Q}^{\sigma}(y) -\int_{\Sigma}\tilde{d}_{k}(x,y)\ d\sigma(y)} =: (I)+(II)+(III). 
\end{multline}
Using the Lipschitz property of $\tilde{d_k}$ \eqref{e:dklip} we immediately get that $(I)\lesssim \alpha_{f\sigma}(Q)$, and that
\begin{equation}\label{eq:estimate of III}
	(III)\lesssim \av{\frac{c^{f\sigma}_Q}{c^{\sigma}_Q}}\alpha_{\sigma}(Q)\stackrel{\eqref{eq:csigma nice}}{\sim}\av{c^{f\sigma}_Q}\alpha_{\sigma}(Q).
\end{equation}
\begin{lemma}
	We have $\av{c^{f\sigma}_Q}\lec |f|_{B_Q}$.
\end{lemma}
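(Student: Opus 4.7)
The plan is to bound $|c_Q^{f\sigma}|$ by testing the definition of $\alpha_{f\sigma}(Q)$ against a carefully chosen $1$-Lipschitz bump function $\varphi$, exploiting the reduction $L_Q^{f\sigma}\cap 0.5B_Q\ne\varnothing$ that was just established.

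First, I would fix a point $z_0\in L_Q^{f\sigma}\cap 0.5B_Q$ and define
\[
\varphi(y)=\max\bigl(0,\ \ell(Q)-|y-z_0|\bigr).
\]
This $\varphi$ is $1$-Lipschitz, nonnegative, bounded by $\ell(Q)$, and its support sits in $B(z_0,\ell(Q))\subset B_Q$ (recall $B_Q=B(z_Q,4\ell(Q))$ and $|z_0-z_Q|\le 2\ell(Q)$), so $\varphi\in\lip_1(B_Q)$. The key feature is a lower bound on the mass of $\varphi$ against $\mathcal{H}^d|_{L_Q^{f\sigma}}$: since $z_0\in L_Q^{f\sigma}$, the $d$-plane meets $B(z_0,\ell(Q)/2)$ in a set of $\mathcal{H}^d$-measure $\sim\ell(Q)^d$, on which $\varphi\ge \ell(Q)/2$. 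Hence
\[
\int_{L_Q^{f\sigma}}\varphi\,d\mathcal{H}^d\gtrsim \ell(Q)^{d+1}.
\]

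Next, from the (near-)minimality of $c_Q^{f\sigma}\mathcal{P}_Q^{f\sigma}$ for $\alpha_{f\sigma}(Q)$,
\[
\Bigl|\int \varphi\,d(f\sigma)-c_Q^{f\sigma}\int_{L_Q^{f\sigma}}\varphi\,d\mathcal{H}^d\Bigr|\lesssim \alpha_{f\sigma}(Q)\,\ell(Q)^{d+1}.
\]
Combining with the lower bound above (and $\varphi\ge 0$, so the integral against $\mathcal{L}_Q^{f\sigma}$ equals $|c_Q^{f\sigma}|$ times the $\mathcal{H}^d$-integral in absolute value),
\[
|c_Q^{f\sigma}|\,\ell(Q)^{d+1}\lesssim \Bigl|\int \varphi\,d(f\sigma)\Bigr|+\alpha_{f\sigma}(Q)\,\ell(Q)^{d+1}.
\]

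Finally, both terms on the right-hand side reduce to multiples of $|f|_{B_Q}\,\ell(Q)^{d+1}$. Since $\varphi$ is supported in $B_Q$ and $|\varphi|\le\ell(Q)$,
\[
\Bigl|\int\varphi\,d(f\sigma)\Bigr|\le \ell(Q)\int_{B_Q}|f|\,d\sigma\lesssim \ell(Q)^{d+1}|f|_{B_Q}.
\]
For the $\alpha_{f\sigma}(Q)$ term, taking the trivial competitor $c=0$ in \eqref{alpha-def} gives $\alpha_{f\sigma}(Q)\ell(Q)^{d+1}\le F_{B_Q}(f\sigma,0)\le \ell(Q)\int_{B_Q}|f|\,d\sigma\lesssim \ell(Q)^{d+1}|f|_{B_Q}$. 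Dividing through by $\ell(Q)^{d+1}$ yields $|c_Q^{f\sigma}|\lesssim |f|_{B_Q}$.

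There is no real obstacle here; the only thing to check carefully is that $\varphi$ genuinely lies in $\lip_1(B_Q)$ and that its integral against $\mathcal{H}^d|_{L_Q^{f\sigma}}$ can be bounded below by $\ell(Q)^{d+1}$, which is where the reduction $L_Q^{f\sigma}\cap 0.5B_Q\ne\varnothing$ is essential — without it, $\varphi$ might miss the approximating plane entirely and the lower bound would collapse.
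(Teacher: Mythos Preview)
Your proof is correct and follows essentially the same approach as the paper: both test the minimizer against a $1$-Lipschitz bump that is large on a substantial portion of $L_Q^{f\sigma}$ (using the reduction $L_Q^{f\sigma}\cap 0.5B_Q\ne\varnothing$), bound $\int\varphi f\,d\sigma$ by $\ell(Q)^{d+1}|f|_{B_Q}$, and use the trivial competitor $c=0$ to control $\alpha_{f\sigma}(Q)$ by $|f|_{B_Q}$. The only cosmetic difference is that the paper phrases the argument as a contradiction (if $|c_Q^{f\sigma}|\ge\Lambda|f|_{B_Q}$ then $c=0$ beats the minimizer), whereas you carry out the same estimates directly.
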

\begin{proof}
	Indeed, if we had $\av{c^{f\sigma}_Q}\ge \Lambda |f|_{B_Q}$ for some big $\Lambda>10$, then $\tilde{c}^{f\sigma}_Q=0$ would be a better competitor for a constant minimizing $\alpha_{f\sigma}(Q)$. To see that, note that for any $\varphi\in\lip_1(B_Q)$
	\begin{equation*}
	\av{\int\varphi f\ d\sigma-0} \le {C} \ell(Q)^{d+1} |f|_{B_Q}.
	\end{equation*}
	That is, $F_{B_{Q}}(f\sigma, 0)\le {C} \ell(Q)^{d+1} |f|_{B_Q}.$ On the other hand, taking a positive $\psi\in\lip_1(B_Q)$ such that $\psi(x)= \ell(Q)$ for $x\in 0.7 B_Q$, and using the assumption $L^{f\sigma}_Q\cap 0.5 B_Q\neq\varnothing$ we get
	\begin{multline*}
	\alpha_{f\sigma}(Q)\ell(Q)^{d+1}\gtrsim \av{\int\psi f\ d\sigma-c^{f\sigma}_Q\int_{L^{f\sigma}_Q}\psi \ d\Hd}\ge \av{c^{f\sigma}_Q}\ell(Q) \Hd(0.7B_Q\cap L^{f\sigma}_Q) - \av{\int\psi f\ d\sigma}\\
	\ge \tilde{C}\Lambda |f|_{B_Q}\ell(Q)^{d+1} - {C}\ell(Q)^{d+1} |f|_{B_Q}\ge \frac{\tilde{C}\Lambda}{2}|f|_{B_Q}\ell(Q)^{d+1}> F_{B_{Q}}(f\sigma, 0),
	\end{multline*}
	assuming $\Lambda$ big enough. This contradicts the optimality of $c^{f\sigma}_Q$.
\end{proof}
Using the lemma above and \eqref{eq:estimate of III} we get
\begin{equation*}
(III)\lesssim |f|_{B_Q}\alpha_{\sigma}(Q).
\end{equation*}
Hence, by \eqref{eq:Dkf estimate triangle ineq}, to finish the proof of \eqref{eq:Dkf estimate} it remains to show that
\begin{equation*}
(II) = \av{c^{f\sigma}_Q}\av{\int_{L_{Q}^{f\sigma}}\tilde{d}_{k}(x,y)\ d\Hd(y) - \int_{L_{Q}^{\sigma}}\tilde{d}_{k}(x,y)\ d\Hd(y)} \lesssim \alpha_{f\sigma}(Q) + |f|_{B_Q}\alpha_{\sigma}(Q).
\end{equation*}
This can be seen as an estimate of how far from each other the planes $L^{f\sigma}_Q$ and $L^{\sigma}_Q$ are. 

The inequality above follows immediately from \propref{prop:angles estimate} proven in the next subsection, together with the already established estimate $\av{c^{f\sigma}_Q}\lec |f|_{B_Q}$.

\subsection{Angles between planes approximating \texorpdfstring{$f\sigma$ and $\sigma$}{fsigma and sigma}}
	In the following proposition we do not use uniform rectifiability in any way, and so we state it for a general Ahlfors regular measure $\mu$. Recall that given a ball $B$ we defined $\cP_B^{\mu}=\Hd\mres L^{\mu}_B.$  
	\begin{prop}\label{prop:angles estimate}
		Let $\mu$ be an Ahlfors $d$-regular measure on $\R^n$, and let $f\in L^{1}_{loc}(\mu)$. Let $x\in\supp\mu,\ r>0,\ B=B(x,r)$, and suppose that $L^{f\mu}_B\cap 0.5B\neq\varnothing$. Then,
		\begin{equation}\label{eq:angle estimated with alphas}
		|c_B^{f\mu}| \frac{1}{r^{d+1}}F_{B}(\cP_B^{\mu}, \cP_B^{f\mu})  \lesssim \alpha_{f\mu}(B) + |c_B^{f\mu}|\alpha_{\mu}(B).
		\end{equation}
	\end{prop}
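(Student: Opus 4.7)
The plan is to reduce the desired estimate to an oscillation bound for $f$ via the triangle inequality for $F_{B}$. Writing $c_{1}:=c_{B}^{f\mu}$, $c_{2}:=c_{B}^{\mu}$, $\cP_{1}:=\cP_{B}^{f\mu}$, $\cP_{2}:=\cP_{B}^{\mu}$, $\cL_{1}:=\cL_{B}^{f\mu}$, $\cL_{2}:=\cL_{B}^{\mu}$, I would begin with
\[
|c_{1}|F_{B}(\cP_{1},\cP_{2}) \;=\; F_{B}(c_{1}\cP_{1},c_{1}\cP_{2}) \;\leq\; F_{B}(\cL_{1},f\mu) + F_{B}(f\mu,c_{1}\cP_{2}),
\]
where the first term is exactly $\alpha_{f\mu}(B)r^{d+1}$ by definition. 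Since $c_{1}\cP_{2}=(c_{1}/c_{2})\cL_{2}$, interposing $(c_{1}/c_{2})\mu$ gives
\[
F_{B}(f\mu,c_{1}\cP_{2}) \;\leq\; F_{B}\bigl((f-c_{1}/c_{2})\mu,0\bigr) + |c_{1}/c_{2}|\,\alpha_{\mu}(B)r^{d+1}.
\]
If $\alpha_{\mu}(B)\gtrsim 1$ the inequality is trivial because $|c_{1}|F_{B}(\cP_{1},\cP_{2})\lesssim|c_{1}|r^{d+1}$, so I may assume $\alpha_{\mu}(B)$ is small; Ahlfors regularity then forces $c_{2}\sim 1$ by a standard comparison of $\mu(B)$ with $c_{2}\Hd(L_{2}\cap B)$ (in the spirit of \cite[Lemma~3.3]{ATT18}), and hence $|c_{1}/c_{2}|\lesssim|c_{1}|$.

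The heart of the proof is the oscillation estimate
\[
F_{B}\bigl((f-c_{1}/c_{2})\mu,0\bigr) \;\lesssim\; \alpha_{f\mu}(B)r^{d+1}+|c_{1}|\alpha_{\mu}(B)r^{d+1},
\]
asserting that $c_{1}/c_{2}$ is well-approximated by the $\mu$-average of $f$ in the $\Lip_{1}(B)$-dual sense. To attack this I would fix the bump $\chi_{0}(y)=(r-|y-x|)_{+}\in\Lip_{1}(B)$: the hypothesis $L^{f\mu}_{B}\cap 0.5B\ne\varnothing$ forces $\int\chi_{0}\,d\cP_{1}\sim r^{d+1}$, and Ahlfors regularity yields $\int\chi_{0}\,d\cP_{2}\sim r^{d+1}$. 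Testing the $\alpha$-approximations $\cL_{1}\approx f\mu$ and $\cL_{2}\approx\mu$ against $\chi_{0}$ then reads off that $c_{1}$ and $c_{2}$ are approximately the ratios $\int\chi_{0}\,df\mu/\int\chi_{0}\,d\cP_{1}$ and $\int\chi_{0}\,d\mu/\int\chi_{0}\,d\cP_{2}$, modulo errors of size $\alpha_{f\mu}(B)$ and $\alpha_{\mu}(B)$ respectively.

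For a general $\phi\in\Lip_{1}(B)$, I would orthogonally split $\phi=\phi^{\perp}+a\chi_{0}$ with $a=\int\phi\,d\cP_{1}\big/\int\chi_{0}\,d\cP_{1}$ so that $\int\phi^{\perp}\,d\cP_{1}=0$; since $|a|\lesssim 1$, the remainder $\phi^{\perp}$ stays in $C\cdot\Lip_{1}(B)$ for a universal $C$. A Bessel-type consequence of the optimality of $(c_{1},L^{f\mu}_{B})$—namely that for every $\psi\in C\cdot\Lip_{1}(B)$ with $\int\psi\,d\cP_{1}=0$ we have $|\int\psi\,df\mu|\lesssim\alpha_{f\mu}(B)r^{d+1}$, obtained by plugging $\psi$ into $F_{B}(f\mu,c_{1}\cP_{1})\leq\alpha_{f\mu}(B)r^{d+1}$—together with the analogous statement against $\mu$ and $\cP_{2}$, then reduces $\int\phi(f-c_{1}/c_{2})\,d\mu$ to a quantity bounded by $\alpha_{f\mu}(B)r^{d+1}+|c_{1}|\alpha_{\mu}(B)r^{d+1}$, as required.

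The main obstacle I anticipate is navigating a genuine circular dependency: a direct application of the triangle inequality to $F_{B}((f-c_{1}/c_{2})\mu,0)$ through $\cL_{1}$ and $(c_{1}/c_{2})\cL_{2}$ reintroduces $|c_{1}|F_{B}(\cP_{1},\cP_{2})$ on the right-hand side and collapses into a tautology. The orthogonal-splitting step above is the crucial device that breaks this circularity, trading the angle estimate for the Bessel-type consequence of the optimality of $(c_{1},L^{f\mu}_{B})$.
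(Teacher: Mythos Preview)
Your reduction is clean up to and including the step
\[
F_{B}(f\mu,\,c_{1}\cP_{2})\;\le\;F_{B}\bigl((f-c_{1}/c_{2})\mu,\,0\bigr)+|c_{1}/c_{2}|\,\alpha_{\mu}(B)\,r^{d+1},
\]
and the handling of $c_{2}\sim 1$ is fine. The gap is in the oscillation estimate, and the orthogonal splitting does \emph{not} break the circularity you flagged. Concretely: with $\phi=\phi^{\perp}+a\chi_{0}$ and $\int\phi^{\perp}\,d\cP_{1}=0$, you do get $\bigl|\int\phi^{\perp}f\,d\mu\bigr|\lesssim\alpha_{f\mu}(B)\,r^{d+1}$ from the $\cP_{1}$-side. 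But the second piece you need is $(c_{1}/c_{2})\int\phi^{\perp}\,d\mu$, and the ``analogous statement against $\mu$ and $\cP_{2}$'' only yields
\[
\Bigl|\int\phi^{\perp}\,d\mu - c_{2}\int\phi^{\perp}\,d\cP_{2}\Bigr|\;\lesssim\;\alpha_{\mu}(B)\,r^{d+1}.
\]
There is no reason for $\int\phi^{\perp}\,d\cP_{2}$ to vanish: you orthogonalised against $\cP_{1}$, not $\cP_{2}$. The residual main term is exactly
\[
c_{1}\int\phi^{\perp}\,d\cP_{2}\;=\;c_{1}\Bigl(\int\phi^{\perp}\,d\cP_{2}-\int\phi^{\perp}\,d\cP_{1}\Bigr),
\]
which in absolute value is bounded only by $|c_{1}|\,F_{B}(\cP_{1},\cP_{2})$---the very quantity you are estimating. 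The $a\chi_{0}$ piece has the same defect: after approximating both sides you are left with $a\,c_{1}\bigl(\int\chi_{0}\,d\cP_{1}-\int\chi_{0}\,d\cP_{2}\bigr)$, again of size $|c_{1}|\,F_{B}(\cP_{1},\cP_{2})$. Tracing constants, the right-hand side reproduces $|c_{1}|\,F_{B}(\cP_{1},\cP_{2})$ with coefficient $1$, so nothing can be absorbed and the inequality collapses to a tautology. Your ``Bessel-type consequence'' is just the defining inequality for $\alpha_{f\mu}$ restricted to mean-zero test functions; it carries no extra information that would decouple $\cP_{1}$ from $\cP_{2}$.

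The paper sidesteps this by abandoning the $F_{B}$-triangle-inequality framework entirely in favour of a geometric argument. It first passes from $F_{B}(\cP_{1},\cP_{2})$ to the Hausdorff distance $D:=\dist_{H}(L^{\mu}\cap B,\,L^{f\mu}\cap B)/r$ via a direct lemma, then locates a $d$-ball $\sB_{0}\subset L^{f\mu}\cap 0.9B$ sitting at distance $\gtrsim Dr$ from $L^{\mu}$. A bump $\varphi$ of height $\sim Dr$ on a thin neighbourhood $U$ of $\sB_{0}$ picks up $\int\varphi\,d\cL^{f\mu}\gtrsim c^{f\mu}Dr^{d+1}$, hence $\int\varphi f\,d\mu\gtrsim c^{f\mu}Dr^{d+1}$ modulo $\alpha_{f\mu}$. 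The crucial non-circular input is that $U$ is disjoint from $L^{\mu}$, so $\mu(U)\lesssim\alpha_{\mu}(B)\,r^{d}/D$; a second test function $\Phi$, equal to $\varphi$ on $\supp\mu$ but supported on the (few) balls covering $\supp\mu\cap U$, then has $\int\Phi\,d\cP^{f\mu}\lesssim\alpha_{\mu}(B)\,r^{d+1}$, forcing $c^{f\mu}D\lesssim c^{f\mu}\alpha_{\mu}(B)+\alpha_{f\mu}(B)$. The point is that the smallness of $\mu$ away from $L^{\mu}$ is used \emph{multiplicatively} against the $D$-scale geometry, which is what your purely dual-norm manipulations cannot see.
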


	In the proof of \propref{prop:angles estimate} we will use the following lemma.

	\begin{lemma}
		Let $B=B(x,r)$ and let $L_1,\ L_2$ be two $d$-planes intersecting $0.5B$. Set $\cP_1=\Hr{L_1},\ \cP_2=\Hr{L_2}$. Then,
		\begin{equation}\label{eq:angle controls F distance}
		\frac{1}{r^d}F_{B}(\cP_1, \cP_2)\lesssim \dist_H(L_1\cap B, L_2\cap B).
		\end{equation}
	\end{lemma}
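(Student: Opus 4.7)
My plan is to construct an affine isometry $T: L_1 \to L_2$ that displaces every point of $L_1 \cap B$ by at most $O(\eta)$, where $\eta := \dist_H(L_1 \cap B, L_2 \cap B)$, and then compare the two surface integrals via a Lipschitz estimate for the test function plus a symmetric-difference boundary estimate.

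First I would dispose of the easy case $\eta \gtrsim r$: for any $\varphi \in \lip_1(B)$ the crude bound $|\varphi| \le r$ together with $\Hd(L_i \cap B) \lesssim r^d$ gives $F_B(\cP_1,\cP_2) \lesssim r^{d+1} \lesssim \eta r^d$. So from now on I may assume $\eta \ll r$. In this regime, the hypotheses $L_i \cap 0.5B \neq \varnothing$ and $\eta \ll r$ force $L_1$ and $L_2$ to be nearly parallel and nearly coincident. Concretely, pick base points $p_1 \in L_1 \cap 0.5B$ and $p_2 \in L_2 \cap B$ with $|p_1 - p_2| \le \eta$ (such a $p_2$ exists by the Hausdorff bound applied to $p_1$), choose an orthogonal map $U$ sending the linear direction of $L_1$ onto that of $L_2$ with $\|U - \mathrm{Id}\| \lesssim \eta/r$ (forced by the Hausdorff proximity of the two $d$-disks $L_i \cap B$, each of near-maximal radius $\sim r$), and set $T(y) := p_2 + U(y - p_1)$. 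Then $T$ is an affine isometry from $L_1$ to $L_2$ with $|T(y) - y| \le |p_2 - p_1| + \|U - \mathrm{Id}\|\,|y - p_1| \lesssim \eta$ for every $y \in L_1 \cap 2B$.

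For any $\varphi \in \lip_1(B)$, since $T$ pushes $\Hd|_{L_1}$ forward to $\Hd|_{L_2}$, I can split
\[
\int_{L_1 \cap B} \varphi\, d\Hd - \int_{L_2 \cap B}\varphi\, d\Hd = \int_{L_1 \cap B}[\varphi(y) - \varphi(T(y))]\, d\Hd(y) + \int_{T(L_1 \cap B)}\varphi\, d\Hd - \int_{L_2 \cap B}\varphi\, d\Hd.
\]
The first term is $\le \eta\, \Hd(L_1 \cap B) \lesssim \eta r^d$ by 1-Lipschitz continuity of $\varphi$. For the second, the symmetric difference $T(L_1 \cap B) \triangle (L_2 \cap B)$ is contained in the $O(\eta)$-neighborhood inside $L_2$ of $\partial(L_2 \cap B)$, whose $\Hd$-measure is $\lesssim \eta r^{d-1}$; combined with $|\varphi| \le r$ this contributes $\lesssim \eta r^d$. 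Dividing by $r^d$ and taking the supremum over $\varphi \in \lip_1(B)$ yields \eqref{eq:angle controls F distance}.

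The only technical point is the quantitative angle bound $\|U - \mathrm{Id}\| \lesssim \eta/r$; this is elementary because each $L_i \cap B$ contains a $d$-disk of radius comparable to $r$ (thanks to $L_i \cap 0.5B \neq \varnothing$), so the Hausdorff-distance constraint pins down the direction of $L_i$ to precision $\eta/r$, while the offset of $L_i$ relative to a fixed base point is pinned down to precision $\eta$. Everything else is routine once $T$ is in hand.
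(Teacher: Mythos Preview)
Your proof is correct and follows essentially the same approach as the paper: both dispose of the case $\eta \gtrsim r$ trivially, construct an affine isometry $T:L_1\to L_2$ with displacement $\lesssim \eta$ on $L_1\cap CB$ (the paper does this explicitly via Gram--Schmidt on a projected orthonormal basis), and then bound $\int\varphi\,d\cP_1-\int\varphi\,d\cP_2$ using the $1$-Lipschitz property of $\varphi$. The one executional difference is that the paper integrates over the full planes $L_i$ rather than $L_i\cap B$, so that the change of variables $\int_{L_2}\varphi=\int_{L_1}\varphi\circ T$ is exact and your symmetric-difference boundary term never appears.
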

	\begin{proof}
		First, set
		\begin{equation*}
		D=\frac{\dist_H(L_1\cap B, L_2\cap B)}{r}.
		\end{equation*}	
		Note that we always have $F_{B}(\cP_1, \cP_2)\lesssim r^{d+1}$ so that if $D\gtrsim 1,$ then \eqref{eq:angle controls F distance} follows trivially. Hence, without loss of generality we may assume that $D\le \varepsilon$ for some $\varepsilon>0$ to be fixed later.
		
		We claim that if $\varepsilon$ is chosen small enough (depending only on $n,\ d$), then there exists an isometry $A:L_1\to L_2$ such that for $y\in B\cap L_1$ we have $|y-A(y)|\lesssim Dr$. To see that, let $y_1\in L_1\cap B$ be arbitrary. Set $y_2 = \pi_{L_2}(y_1)$. Clearly,
		\begin{equation*}
		|y_1-y_2|\le Dr\le \varepsilon r.
		\end{equation*}		
		Let $v_1,\dots,v_d$ be an orthonormal basis of the linear plane $L_1':=L_1-y_1$. For $i=1,\dots, d$ define
		\begin{equation*}
		w_i := \pi_{L_2}(y_1+v_i) - y_2\in L_2-y_2=:L_2'.
		\end{equation*}
		In fact, since $y_2=\pi_{L_2}(y_1)$, we have $w_i=\pi_{L_2'}(v_i)$. It is easy to see that for all $v\in L'_1$ we have
		\begin{equation*}
		|\pi_{L_2'}(v) - v|\lesssim D|v|.
		\end{equation*}
		Hence, $|w_i - v_i|\lesssim D\le \varepsilon$ and for $i\neq j$
		\begin{equation*}
		|w_i\cdot w_j| = |(w_i-v_i)\cdot (w_j-v_j) + (w_i-v_i)\cdot v_j + v_i\cdot (w_j-v_j)|\lesssim D\le \varepsilon.
		\end{equation*}
		Choosing $\varepsilon$ small enough (depending only on dimensions), we get easily that $\{w_i\}$ is a basis of $L_2'$. Moreover, if $\{\hat{w_i}\}$ is the orthonormal basis of $L_2'$ constructed from $\{w_i\}$ using the Gram-Schmidt process, then it follows from the estimates above that for all $i=1,\dots,d$
		\begin{equation*}
		|\hat{w_i}-v_i|\lesssim D.
		\end{equation*}
		We define the map $A:L_1\to L_2$ as the unique isometry such that $A(y_1)=y_2$ and $A(y_1+v_i)=y_2 + \hat{w_i}$. It follows immediately from basic linear algebra that for $y\in L_1\cap B$ we have $|y-A(y)|\lesssim Dr$.
		
		Now, let $\varphi\in\lip_1(B)$. We have
		\begin{multline*}
		\bigg|\int_{L_1}\varphi(y)\ d\Hd(y) - \int_{L_2}\varphi(y)\ d\Hd(y)\bigg| = \bigg|\int_{L_1}\varphi(y)\ d\Hd(y) - \int_{L_1}\varphi(A(y))\ d\Hd(y)\bigg|\\
		\le \int_{L_1}|\varphi(y)-\varphi(A(y))|\ d\Hd(y)\lesssim \int_{L_1\cap B} Dr\ d\Hd(y)\lesssim Dr^{d+1}. 
		\end{multline*}
		Taking supremum over $\varphi\in\lip_1(B)$ finishes the proof.
	\end{proof}
	
	\begin{proof}[Proof of \propref{prop:angles estimate}]
		For simplicity of notation we will usually omit the subscript $B$, i.e. we will write $L^{\mu}:=L_{B}^{\mu},\ c^{f\mu}:=c^{f\mu}_B$, and so on. 
		
		Without loss of generality we can assume that $c^{f\mu}\ge 0$. Indeed, if that was not the case we could consider $g=-f$. Then the plane and constant $L^{g\mu}=L^{f\mu},\ c^{g\mu} = -c^{f\mu}\ge 0$ are minimizing for $\alpha_{g\mu}(B)$, and we have $\alpha_{g\mu}(B) = \alpha_{f\mu}(B)$. Thus, proving \eqref{eq:angle estimated with alphas} for $g$ is equivalent to proving it for $f$, and $c^{g\mu}\ge 0$.
		
		Note that we always have $F_{B}(\cP^{\mu}, \cP^{f\mu})\lesssim r^{d+1}$ so that if $\alpha_{\mu}(B)\gtrsim 1$, then \eqref{eq:angle estimated with alphas} is trivial. Assume that $\alpha_{\mu}(B)\le \varepsilon$ for some small $\varepsilon>0$ (depending on dimensions and Ahlfors regularity constants), to be fixed later. 
		
		Note that if $\varepsilon$ is small enough, then one can use Ahlfors regularity of $\mu$ to conclude that $L^{\mu}\cap 0.5B\neq\varnothing$ (see for example \cite[Lemma 3.1]{Tol09}). We use this observation, the assumption $L^{f\mu}\cap 0.5B\neq\varnothing$ and \eqref{eq:angle controls F distance} to estimate
		\begin{equation*}
		c^{f\mu}\frac{1}{r^{d+1}}F_{B}(\cP^{\mu}, \cP^{f\mu}) \lesssim c^{f\mu}\frac{\dist_H(L^{\mu}\cap B, L^{f\mu}\cap B)}{r}=:c^{f\mu} D.
		\end{equation*}
		Our aim is to show that
		\begin{equation}\label{eq:ultimate goal}
		c^{f\mu} D \lesssim c^{f\mu}\alpha_{\mu}(B)+\alpha_{f\mu}(B).
		\end{equation}
		
		Let $0<\eta<0.01$ be some dimensional constant. Note that, since $L^{f\mu}\cap 0.5B\neq\varnothing$, the set $L^{f\mu}\cap 0.9 B$ is a $d$-dimensional ball with $\Hd(L^{f\mu}\cap 0.9 B)\sim r^d$. We claim that we can find a $d$-dimensional ball $\sB_0$ contained in  $L^{f\mu}\cap 0.9 B$, of radius $\eta r$ (in particular $r_{\sB_0}\sim_\eta r_B$), and such that 
		\begin{equation}\label{eq:B0 far from Psigma}
		\dist(z,L^{\mu})\ge 10\eta Dr\quad\quad\text{for all $z\in \sB_0$.}
		\end{equation}
		Indeed, if there was no such ball, i.e. if for all $d$-dimensional balls $\sB_0\subset L^{f\mu}\cap 0.9 B$ of radius $\eta r$ there was some $z\in\sB_0$ with $\dist(z,L^{\mu})\le 10\eta Dr$, then it would follow easily from the definition of Hausdorff distance, and from the fact that $L^{\mu}$ and $L^{f\mu}$ are $d$-planes intersecting $0.5B$, that
		\begin{equation*}
		\dist_H(L^{\mu}\cap B, L^{f\mu}\cap B)\lesssim \eta Dr = \eta \dist_H(L^{\mu}\cap B, L^{f\mu}\cap B).
		\end{equation*}
		For $\eta$ small enough, this is a contradiction. We omit the details, which can be readily filled in e.g. using \cite[Lemma 6.4]{azzam2015characterization}.
		
		Consider an open neighbourhood of $\sB_0$ given by
		\begin{equation*}
		U:=\{y\in\R^n\ :\ \dist(y,\sB_0)<\eta D r\},
		\end{equation*}
		and also for $\lambda>0$ set
		\begin{equation*}
		\lambda U:=\{y\in\R^n\ :\ \dist(y,\sB_0)<\lambda\eta D r\}.
		\end{equation*}
		Since $D\le 1$, one should think of $U$ as an $n$-dimensional pancake around $\sB_0$ of thickness $\eta D r$, so that the smaller $D$, the flatter the pancake. Note that by \eqref{eq:B0 far from Psigma} for all $0<\lambda<10$ we have $\lambda U\cap L^{\mu}=\varnothing$, and also $\lambda U\subset B$ because $\sB_0\subset 0.9B$.
		
		Let $\varphi:\R^d\to [0,\eta D r]$ be a function satisfying $\varphi\equiv \eta D r$ in $U$, $\supp\varphi\subset 2U$, and $\lip(\varphi)\le 1$. Clearly, $\varphi\in\lip_1(B)$, and so
		\begin{equation}
		\bigg|\int\varphi f\ d\mu - \int\varphi\ d\cL^{f\mu}\bigg|\le \alpha_{f\mu}(B)r^{d+1}.\label{eq:varphi on fsigma 1}
		\end{equation}
		Furthermore, note that $\varphi\equiv\eta Dr$ on $\sB_0$, so that
		\begin{equation*}
		\int\varphi\ d\cL^{f\mu} = c^{f\mu}\int_{L^{f\mu}}\varphi\ d\Hd \ge c^{f\mu} \eta D r \Hd(\sB_0)= C(d) c^{f\mu} D \eta^{d+1}r^{d+1}.
		\end{equation*}
		Together with \eqref{eq:varphi on fsigma 1} this implies
		\begin{equation}\label{eq:varphi on fsigma 2}
		\int\varphi f\ d\mu \ge C(\eta,d) c^{f\mu} D r^{d+1} - \alpha_{f\mu}(B) r^{d+1}.
		\end{equation}
		Recall that we are trying to prove $c^{f\mu} D \lesssim c^{f\mu}\alpha_{\mu}(B)+\alpha_{f\mu}(B)$. If we had $c^{f\mu}D\le \Lambda \alpha_{f\mu}(B)$ for some $\Lambda=\Lambda(\eta,d)>100$, then there is nothing to prove. So without loss of generality assume that $c^{f\mu}D\ge \Lambda \alpha_{f\mu}(B)$. In that case \eqref{eq:varphi on fsigma 2} gives
		\begin{equation}\label{eq:varphi on fsigma 3}
		\int\varphi f\ d\mu \gtrsim_{\eta} c^{f\mu} D r^{d+1}.
		\end{equation}
		
		Now we define a modified version of $\varphi$. Recall that $\supp\varphi\subset 2U$. For all $y\in\supp\mu\cap 2U$ let $B_y = B(y,\eta D r/5)$. We use the $5r$ covering theorem to extract from $\{B_y\}_{y\in \supp\mu\cap 2U}$ a subfamily of pairwise disjoint balls $\{B_i\}_{i\in I}$ such that $\supp\mu\cap 2U\subset \bigcup_i 5B_i$. Note that $\bigcup_i 10B_i\subset 4U$, and in particular, $\bigcup_i 10B_i\cap L^{\mu}=\varnothing$. Moreover, the balls $10B_i$ have bounded intersection. Thus, we may consider a partition of unity
		\begin{equation*}
		\Psi = \sum_{i\in I}\psi_i,
		\end{equation*}
		such that $\supp\psi_i\subset 10B_i$ for each $i\in I$, $\Psi\equiv 1$ on $\bigcup_i 5B_i$, and $\lip\Psi\lesssim (\eta D r)^{-1}$.
		
		Consider $\Phi = \varphi\Psi$. We have
		\begin{equation*}
		\norm{\nabla\Phi}_{\infty}\le \norm{\nabla\varphi}_{\infty}\norm{\Psi} _{\infty} + \norm{\varphi}_{\infty}\norm{\nabla\Psi} _{\infty}\lesssim 1 + \eta D r (\eta D r)^{-1} = 1.
		\end{equation*}
		Hence, $C\Phi\in\lip_1(B)$ for some $C\sim 1$, so that
		\begin{equation}\label{eq:tildephi on fsigma}
		\bigg|\int\Phi f\ d\mu - \int\Phi\ d\cL^{f\mu}\bigg|\le C^{-1} \alpha_{f\mu}(B)r^{d+1}.
		\end{equation}
		On the other hand, observe that $\Psi\equiv 1$ on $\supp\varphi\cap\supp\mu$. By \eqref{eq:varphi on fsigma 3}
		\begin{equation*}
		\int\Phi f\ d\mu = \int\varphi f\ d\mu\gtrsim_{\eta} c^{f\mu} D r^{d+1}.
		\end{equation*}
		Together with \eqref{eq:tildephi on fsigma} this gives
		\begin{equation}\label{eq:tildephi on Lfsigma}
		\int\Phi\ d\cL^{f\mu}\ge C(\eta)c^{f\mu} Dr^{d+1} - C^{-1}\alpha_{f\mu}(B) r^{d+1}\gtrsim_{\eta}c^{f\mu} Dr^{d+1},
		\end{equation}
		where we used once again the additional assumption $c^{f\mu} D\ge \Lambda\alpha_{f\mu}(B)$ we made along the way (and choosing $\Lambda$ large). 
		
		Now we will show that
		\begin{equation}\label{eq:Phi on Pfsigma{key}}
		\int_{L^{f\mu}}\Phi\ d\Hd \lesssim_{\eta} \alpha_{\mu}(B)r^{d+1}.
		\end{equation}
		Since $\cL^{f\mu} = c^{f\mu}\Hr{L^{f\mu}}$, together with \eqref{eq:tildephi on Lfsigma} this will give $c^{f\mu}D\lesssim_{\eta} c^{f\mu} \alpha_{\mu}(B)$, and so the proof of \eqref{eq:ultimate goal} will be finished.
		
		Recall that $\supp\Phi\subset \supp\Psi\subset \bigcup_i 10B_i$, and that $\norm{\Phi}_{\infty}\le\norm{\varphi}_{\infty}=\eta D r$. Hence,
		\begin{equation*}
		\int_{L^{f\mu}}\Phi\ d\Hd \lesssim_{\eta}Dr \sum_{i\in I}\Hd(L^{f\mu}\cap 10B_i)\lesssim_{\eta} \#I (Dr)^{d+1}.
		\end{equation*}
		To estimate $\#I$ we will use Ahlfors regularity of $\mu$. Recall that $\{B_i\}_{i\in I}$ are pairwise disjoint, they are centered at points from $\supp\mu\cap 2U$, and $r(B_i)=\eta r D/5$. Thus,
		\begin{equation*}
		\#I (rD)^d \sim_\eta \sum_{i\in I}\mu(B_i)= \mu\big(\bigcup_{i\in I} B_i\big).
		\end{equation*}
		On the other hand, since the balls $\{B_i\}$ are centered at points from $2U$, we have $\bigcup_{i\in I} B_i\subset 3U$ and
		\begin{equation*}
		\mu\big(\bigcup_{i\in I} B_i\big)\le \mu(3U).
		\end{equation*}
		To bound $\mu(3U)$ consider $\tilde{\varphi}\in\lip_1(B)$ such that $\tilde{\varphi}\ge 0,\ \tilde{\varphi}\equiv \eta rD$ on $3U$ and $\supp\tilde{\varphi}\subset 4U$. Recalling that $4U\cap L^{\mu}=\varnothing$, we arrive at
		\begin{equation*}
		rD\mu(3U)\lesssim_{\eta} \int\tilde{\varphi}\ d\mu = \bigg|\int\tilde{\varphi}\ d\mu - \int\tilde{\varphi}\ d\cL^{\mu}\bigg| \le \alpha_{\mu}(B)r^{d+1}.
		\end{equation*} 
		Putting all the estimates above together we get \eqref{eq:Phi on Pfsigma{key}}:
		\begin{equation*}
		\int_{L^{f\mu}}\Phi\ d\Hd \lesssim_{\eta} \#I (Dr)^{d+1}\lesssim_{\eta}rD\mu(3U)\lesssim_{\eta}\alpha_{\mu}(B)r^{d+1}.
		\end{equation*}
	\end{proof}

\def\cprime{$'$}


\begin{thebibliography}{DEM18}
	\expandafter\ifx\csname url\endcsname\relax
	\def\url#1{\texttt{#1}}\fi
	\expandafter\ifx\csname doi\endcsname\relax
	\def\doi#1{\burlalt{doi:#1}{http://dx.doi.org/#1}}\fi
	\expandafter\ifx\csname urlprefix\endcsname\relax\def\urlprefix{URL }\fi
	\expandafter\ifx\csname href\endcsname\relax
	\def\href#1#2{#2}\fi
	\expandafter\ifx\csname burlalt\endcsname\relax
	\def\burlalt#1#2{\href{#2}{#1}}\fi
	
	\bibitem[ADT16]{ADT16}
	J.~Azzam, G.~David, and T.~Toro.
	\newblock {Wasserstein} distance and the rectifiability of doubling measures:
	part {I}.
	\newblock {\em Math. Ann.}, 364(1-2):151--224, 2016,
	\burlalt{arXiv:1408.6645}{http://arxiv.org/abs/1408.6645}.
	\newblock \doi{10.1007/s00208-015-1206-z}.
	
	\bibitem[AT15]{azzam2015characterization}
	J.~Azzam and X.~Tolsa.
	\newblock Characterization of $n$-rectifiability in terms of {Jones'} square
	function: Part {II}.
	\newblock {\em Geom. Funct. Anal.}, 25(5):1371--1412, 2015,
	\burlalt{arXiv:1501.01572}{http://arxiv.org/abs/1501.01572}.
	\newblock \doi{10.1007/s00039-015-0334-7}.
	
	\bibitem[ATT18]{ATT18}
	J.~{Azzam}, X.~{Tolsa}, and T.~{Toro}.
	\newblock {Characterization of rectifiable measures in terms of
		$\alpha$-numbers}.
	\newblock {\em Preprint, to appear in Trans. Amer. Math. Soc.}, 2018,
	\burlalt{arXiv:1808.07661}{http://arxiv.org/abs/1808.07661}.
	\newblock \doi{10.1090/tran/8170}.
	
	\bibitem[Chr90]{christ1990tb}
	M.~Christ.
	\newblock A {$T(b)$} theorem with remarks on analytic capacity and the {Cauchy}
	integral.
	\newblock {\em Colloq. Math.}, 60:601--628, 1990.
	\newblock \doi{10.4064/cm-60-61-2-601-628}.
	
	\bibitem[D{\k{a}}b19]{Dab19}
	D.~D{\k{a}}browski.
	\newblock Necessary condition for rectifiability involving {Wasserstein}
	distance {$W_2$}.
	\newblock {\em Preprint, to appear in Int. Math. Res. Not. IMRN}, 2019,
	\burlalt{arXiv:1904.11000}{http://arxiv.org/abs/1904.11000}.
	\newblock \doi{10.1093/imrn/rnaa012}.
	
	\bibitem[Dav88]{david1988morceaux}
	G.~David.
	\newblock Morceaux de graphes lipschitziens et int{\'e}grales singulieres sur
	une surface.
	\newblock {\em Rev. Mat. Iberoam.}, 4(1):73--114, 1988.
	\newblock \doi{10.4171/RMI/64}.
	
	\bibitem[DEM18]{DEM18}
	G.~David, M.~Engelstein, and S.~Mayboroda.
	\newblock Square functions, non-tangential limits and harmonic measure in
	co-dimensions larger than one.
	\newblock {\em Preprint, to appear in Duke Math. J.}, 2018,
	\burlalt{arXiv:1808.08882}{http://arxiv.org/abs/1808.08882}.
	
	\bibitem[DJS85]{DJS85}
	G.~David, J.-L. Journ{\'e}, and S.~Semmes.
	\newblock {Op{\'e}rateurs de Calder{\'o}n-Zygmund, fonctions
		para-accr{\'e}tives et interpolation}.
	\newblock {\em Rev. Mat. Iberoam.}, 1(4):1--56, 1985.
	\newblock \doi{10.4171/RMI/17}.
	
	\bibitem[DM20]{david2020harmonic}
	G.~David and S.~Mayboroda.
	\newblock Harmonic measure is absolutely continuous with respect to the
	Hausdorff measure on all low-dimensional uniformly rectifiable sets.
	\newblock {\em arXiv preprint}, 2020,
	\burlalt{arXiv:2006.14661}{http://arxiv.org/abs/2006.14661}.
	
	\bibitem[DS91]{DS}
	G.~David and S.~Semmes.
	\newblock Singular integrals and rectifiable sets in {$\mathbb{R}^n$}:
	Au-del\`{a} des graphes lipschitziens.
	\newblock {\em Ast{\'e}risque}, 193, 1991.
	\newblock \doi{10.24033/ast.68}.
	
	\bibitem[Fen20]{feneuil2020absolute}
	J.~Feneuil.
	\newblock Absolute continuity of the harmonic measure on low dimensional
	rectifiable sets.
	\newblock {\em arXiv preprint}, 2020,
	\burlalt{arXiv:2006.03118}{http://arxiv.org/abs/2006.03118}.
	
	\bibitem[Gra14]{grafakos2014classical}
	L.~Grafakos.
	\newblock {\em {Classical Fourier Analysis}}, volume 249 of {\em Grad. Texts in
		Math.}
	\newblock Springer-Verlag New York, 3 edition, 2014.
	\newblock \doi{10.1007/978-1-4939-1194-3}.
	
	\bibitem[HT14]{hytonen2014almost}
	T.~Hyt{\"o}nen and O.~Tapiola.
	\newblock Almost Lipschitz-continuous wavelets in metric spaces via a new
	randomization of dyadic cubes.
	\newblock {\em J. Approx. Theory}, 185:12--30, 2014,
	\burlalt{arXiv:1310.2047}{http://arxiv.org/abs/1310.2047}.
	\newblock \doi{10.1016/j.jat.2014.05.017}.
	
	\bibitem[Mat95]{mattila1999geometry}
	P.~Mattila.
	\newblock {\em Geometry of sets and measures in {Euclidean} spaces: fractals
		and rectifiability}, volume~44 of {\em Cambridge Stud. Adv. Math.}
	\newblock Cambridge Univ. Press, 1995.
	\newblock \doi{10.1017/CBO9780511623813}.
	
	\bibitem[Orp18]{Orp18}
	T.~Orponen.
	\newblock Absolute continuity and $\alpha$-numbers on the real line.
	\newblock {\em Anal. PDE}, 12(4):969--996, 2018,
	\burlalt{arXiv:1703.02935}{http://arxiv.org/abs/1703.02935}.
	\newblock \doi{10.2140/apde.2019.12.969}.
	
	\bibitem[Tol01]{tolsa2001littlewood}
	X.~Tolsa.
	\newblock {Littlewood--Paley theory and the $T(1)$ theorem with non-doubling
		measures}.
	\newblock {\em Adv. Math.}, 164(1):57--116, 2001,
	\burlalt{arXiv:math/0006039}{http://arxiv.org/abs/math/0006039}.
	\newblock \doi{10.1006/aima.2001.2011}.
	
	\bibitem[Tol08]{Tol08}
	X.~Tolsa.
	\newblock Principal values for {R}iesz transforms and rectifiability.
	\newblock {\em J. Funct. Anal.}, 254(7):1811--1863, 2008,
	\burlalt{arXiv:0708.0109}{http://arxiv.org/abs/0708.0109}.
	\newblock \doi{j.jfa.2007.07.020}.
	
	\bibitem[Tol09]{Tol09}
	X.~Tolsa.
	\newblock Uniform rectifiability, {Calder{\'o}n}-{Zygmund} operators with odd
	kernel, and quasiorthogonality.
	\newblock {\em Proc. Lond. Math. Soc. (3)}, 98(2):393--426, 2009,
	\burlalt{arXiv:0805.1053}{http://arxiv.org/abs/0805.1053}.
	\newblock \doi{10.1112/plms/pdn035}.
	
	\bibitem[Tol12]{Tol12}
	X.~Tolsa.
	\newblock Mass transport and uniform rectifiability.
	\newblock {\em Geom. Funct. Anal.}, 22(2):478--527, 2012,
	\burlalt{arXiv:1103.1543}{http://arxiv.org/abs/1103.1543}.
	\newblock \doi{10.1007/s00039-012-0160-0}.
	
	\bibitem[Tol14]{tolsa2014analytic}
	X.~Tolsa.
	\newblock {\em Analytic capacity, the Cauchy transform, and non-homogeneous
		{Calder{\'o}n}-{Zygmund} theory}, volume 307 of {\em Progress in
		Mathematics}.
	\newblock Birkhäuser, 2014.
	\newblock \doi{10.1007/978-3-319-00596-6}.
	
	\bibitem[Tol17]{tolsa2017rectifiable}
	X.~Tolsa.
	\newblock Rectifiable measures, square functions involving densities, and the
	{Cauchy} transform.
	\newblock {\em Mem. Amer. Math. Soc.}, 245(1158), 2017,
	\burlalt{arXiv:1408.6979}{http://arxiv.org/abs/1408.6979}.
	\newblock \doi{10.1090/memo/1158}.
	
\end{thebibliography}
\end{document}